\numberwithin{equation}{section}
\theoremstyle{plain}
\theoremstyle{plain}
\newtheorem{theorem}{Theorem}[section]
\newtheorem{corollary}[theorem]{Corollary}
\newtheorem{proposition}[theorem]{Proposition}
\newtheorem{lemma}[theorem]{Lemma}
\theoremstyle{definition}
\newtheorem{definition}[theorem]{Definition}
\theoremstyle{remark}
\newtheorem{remark}[theorem]{Remark}
\theoremstyle{definition}
\theoremstyle{remark}
\mathchardef\emptyset="001F
\newcommand{\R}{\mathbb{R}}
\newcommand{\E}{\mathcal{E}}
\newcommand{\B}{\mathrm{B}}
\newcommand{\dive}{\mathrm{div}}
\newcommand{\spt}{\mathrm{spt}}
\newcommand{\T}{\mathcal{T}}
\newcommand{\di}{\mathrm{d}}
\newcommand{\F}{\mathcal{F}}
\newcommand{\sP}{\mathcal{P}}
\newcommand{\Lip}{\mathrm{Lip}}
\newcommand{\spann}{\mathrm{span}}
\newcommand{\y}{\boldsymbol{y}}
\newcommand{\nnu}{\boldsymbol{\nu}}
\newcommand{\mmu}{\boldsymbol{\mu}}
\newcommand{\de}{\mathrm{d}}
\newcommand{\z}{\boldsymbol{z}}
\newcommand{\bx}{\boldsymbol{x}}
\newcommand{\blambda}{\boldsymbol{\lambda}}
\newcommand{\sfd}{\mathsf{d}}
\newcommand{\cF}{\mathcal{F}}
\newcommand{\cL}{\mathcal{L}}
\newcommand{\cP}{\mathcal{P}}
\newcommand{\cT}{\mathcal{T}}
\newcommand{\BL}{\mathrm{BL}}
\newcommand{\uu}{\boldsymbol{u}}
\newcommand{\ev}{\mathrm{ev}}
\definecolor{dred}{rgb}{.8,0,0}
\definecolor{ddmagenta}{rgb}{0.7,0,0.9}
\definecolor{ddcyan}{rgb}{0,0.2,1.0}
\definecolor{Orchid}{rgb}{0.7,0.4,0}
\definecolor{blue_links}{RGB}{13,0,180} 
\definecolor{lightblue}{RGB}{0.9,0.9,1}
\newcommand{\eps}{\varepsilon}
\newcommand{\res}{\mathop{\hbox{\vrule height 7pt width .5pt depth
0pt\vrule height .5pt width 6pt depth0pt}}\nolimits}
\newcommand{\N}{\mathbb{N}}
\newcommand{\coloneq }{\hspace{1pt}\raisebox{0.74pt}{\scalebox{0.8}{:}}\hspace{-2.2pt}=}
\newcommand{\eeta}{\boldsymbol{\eta}}
\newcommand{\Pp}{\mathcal{P}}
\title[Mean-field selective optimal control via transient leadership]{Mean-field selective optimal control\\ via transient leadership}
\author[G. Albi]{Giacomo Albi}
\address[Giacomo Albi]{Dipartimento di Informatica, 
Universit\`a di Verona, Strada Le Grazie 14, Ca Vignal 2, 37134 Verona, Italy.} 
\email{giacomo.albi@univr.it}
\author[S. Almi]{Stefano Almi}
\address[Stefano Almi]{Faculty of Mathematics, University of Vienna, 
Oskar-Morgenstern-Platz 1, 1090 Wien, Austria.}
\email{stefano.almi@univie.ac.at}
\author[M. Morandotti]{Marco Morandotti}
\address[Marco Morandotti]{Dipartimento di Scienze Matematiche ``G.~L.~Lagrange'',
Politecnico di Torino, Corso Duca degli Abruzzi 24,
10129 Torino, Italy.}
\email{marco.morandotti@polito.it}
\author[F. Solombrino]{Francesco Solombrino}
\address[Francesco Solombrino]{Dipartimento di Matematica e Applicazioni ``R.~Caccioppoli'',
Universit\`a di Napoli Federico II, via Cintia, 80126 Napoli, Italy.}
\email{francesco.solombrino@unina.it}
\date{\today}
\keywords{Mean-field optimal control, selective control, population dynamics, leader-follower dynamics, $\Gamma$-convergence, superposition principle}
\begin{document}
\subjclass[2020]{49N80, 
35Q93, 
(35Q91, 
60J76, 
49J45, 
35Q49, 
49M41)} 

\begin{abstract}
A mean-field selective optimal control problem of multipopulation dynamics via transient leadership is considered. 
The agents in the system are described by their spatial position and their probability of belonging to a certain population. 
The dynamics in the control problem is characterized by the presence of an activation function which tunes the control on each agent according to the membership to a population, which, in turn, evolves according to a Markov-type jump process.
This way, a hypothetical policy maker can select a restricted pool of agents to act upon based, for instance, on their time-dependent influence on the rest of the population. 
A finite-particle control problem is studied and its mean-field limit is identified via $\Gamma$-convergence, ensuring convergence of optimal controls. The dynamics of the mean-field optimal control is governed by a continuity-type equation without diffusion.
Specific applications in the context of opinion dynamics are discussed with some numerical experiments.

\end{abstract}

\maketitle

\tableofcontents


\section{Introduction}
Multi-population agent systems have drawn much attention in the last decades as a tool to describe the evolution of groups of individuals with some features that can change with time. These models find their application in contexts as varied as evolutionary population dynamics~\cite{AmbForMorSav18,MR1635735,TJ1978}, economics \cite{W1995}, chemical reaction networks~\cite{LLN2019,CRN,O1989}, and kinetic models of opinion formation \cite{BMPW2009,Toscani2006}.
In these models, each agent carries a label that may describe, for instance, membership to a population (\emph{e.g.}, leaders or followers), or the strategy used in a game. While this label space is often discrete, for many applications (and also as a necessary condition for the existence   of Nash equilibria~\cite{Nash1951}) it is useful to attach to each agent located at a point~$x\in\R^d$ a continuous variable which describes their \emph{mixed strategies} or, referring back to the context of leaders and followers, their degree of influence. If $U$ denotes the space of labels, this may be encoded by a probability measure $\lambda\in\sP(U)$.
It is natural to postulate that $\lambda$ can vary with time according to a spatially inhomogeneous Markov-type jump process with a transition rate~$\T(x,\lambda,(\bx,\blambda))$ that may depend on the position $x$ of the agent and on the global state of the system $(\bx,\blambda)$, containing the positions and the labels of all the agents. Leadership may indeed be temporary and affected, for instance, by circumstances, need, location, and mutual distance among the agents.

Mean-field descriptions of such systems allow one for an efficient treatment by replacing the many-agent paradigm with a kinetic one \cite{CCR2011,CCH2014}, consisting of a limit PDE whose unknown is the distribution of agents with their label, as those obtained in \cite{ABRS,AlmMorSol21,MorSol19,thai} (see also \cite{LT2020} for a related Boltzmann-type approach).

A further step which we devise in this paper is the extension of the mean-field point of view to the problem of controlling such systems, possibly in a selective way.
The underlying idea is the presence of a policy maker whose control action, at any instant of time, concentrates on a subset of the population chosen according to the level of influence of the agents. 

More precisely, in a population of $N$ agents, the time-dependent state of the $i$-th agent is given by $t\mapsto y_i(t)=(x_i(t),\lambda_i(t))$, where $x_i\in \R^d$ and $\lambda_i\in\sP(U)$, for every $i=1,\ldots,N$, and evolves according to the controlled ODE system
\begin{equation}\label{1.1}
\begin{cases}
\dot{x}_{i} = v (x_i,\lambda_i,(\bx, \blambda)) + h(x_i,\lambda_i,(\bx,\blambda))u_i\\
\dot{\lambda}_i = \cT(x_i,\lambda_i,(\bx,\blambda))
\end{cases}
\end{equation}
where $v$ is a velocity field, $u_i$ is the control on the $i$-th agent, and $h\geq0$ is a non-negative activation function selecting the set of agents targeted by the decision of the policy maker, depending on their state and, possibly, on the global state of the system.
The values $u_i$ are determined by minimization of the cost functional 
\begin{equation}
\label{e:cost-N-particles-intro}
\E_{N} ( \y, \uu)\coloneqq
\displaystyle  \frac{1}{N} \sum_{i=1}^{N} \fint_{0}^{T} \cL_{N}( y_{i}(t) , \Psi^{N}_{t} ) \, \di t + \frac{1}{N}  \sum_{i=1}^{N} \fint_{0}^{T} \phi(u_{i}(t)) \, \di t 
\end{equation}
where $\Psi^N_t$ is the empirical measure defined as $\Psi^N_t \coloneqq \frac{1}{N}\sum_{i=1}^{N} \delta_{y_i(t)}$ and $\phi$ is a positive convex cost function, superlinear at infinity, and such that $\phi(0)=0$; finally, the Lagrangian $\cL_N(\cdot)$ is continuous and symmetric (see Definition~\ref{d:sym} and Remark~\ref{r:sym} below).
 
In this paper we show that the variational limit, in the sense of $\Gamma$-convergence \cite{Braides2002,DM1993} in a suitable topology, of the functional introduced in \eqref{e:cost-N-particles-intro} is given by 
\begin{equation}\label{e:limit-cost-intro}
\E (\Psi, w) \coloneq \fint_{0}^{T} \int_{\R^d\times\sP(U)} \cL(y , \Psi_{t}) \, \di \Psi_{t}(y) \, \di t + \fint_{0}^{T} \int_{{\R^d\times\sP(U)}} \phi(w(t, y) ) \, \di \Psi_{t}(y)\, \di t\,,
\end{equation}
where $\cL$ is a certain limit Lagrangian cost and where $\Psi_t\in\sP(\R^d\times\sP(U))$ and $w$ are coupled by the mean-field continuity equation
\begin{equation}\label{e:conteq-2-intro}
\partial_{t} \Psi_{t} + \dive ( b (t, \cdot) \Psi_{t}) = 0\quad\text{for}\quad b (t, y) \coloneq \left( \begin{array}{cc}
v(y,\Psi_t)+ h(y,\Psi_{t}) w (t, y) \\
\T(y,\Psi_t)
\end{array} \right),
\end{equation}
with the request that $w$ be integrable with respect to the measure $h\Psi_t\otimes\di t$. From the point of view of the applications, we remark that our main result Theorem~\ref{t:Gamma-convergence} implies that a minimizing pair $(\Psi,w)$ for the optimal control problem \eqref{e:limit-cost-intro} can be obtained as the limit of minimizers~$(\y^N,\uu^N)$ of the finite-particle optimal control problem \eqref{e:cost-N-particles-intro} (a precise statement is given in Corollary~\ref{c:minima}).

In this sense, our result extends to the multi-population setting the results of \cite{Flos,FS2014} with the relevant feature that the activation function $h$ allows the policy maker to tune the control action on a subset of the entire population which is \emph{not prescribed} a priori, but rather depends on the evolution of the system.
At fixed time $t>0$, it can target its intervention on the most influential elements of the population according to a threshold encoded by $h$. This is similar, in spirit, to a principle of sparse control, as considered, \emph{e.g.}, in \cite{albi2018selective,caponigro2017mean,fornasier2014mean}. 
Again, our model includes additional features; in particular, a control action only through leaders is already present in \cite{fornasier2014mean}, where however the leaders' population is fixed a priori and discrete.
A localized control action on a small time-varying subset~$\omega(t)$ of the state space of the system is presented in \cite{caponigro2017mean} as an infinite-dimensional generalization of \cite{JQ1978}; there, no optimal control is considered and the evolution of~$\omega(t)$ is algorithmically constructed to reach a desired target, instead of being determined by the evolution itself.
The numerical approach of \cite{albi2018selective} makes use of a selective state-dependent control specifically designed for the Cucker--Smale model. For other recent examples of localized/sparse intervention in mean-field control systems, we refer the reader to \cite{albi2016invisible,albi2,burger2020instantaneous,CLOS2020,kalise2020sparse,PTZ2020,TZ2019}.

\smallskip

The role of the variable $\lambda$ deserves some attention. It can be generally intended as a measure of the influence of an agent, accounting for a number of different interpretations according to the context. Similar background variables have been used in recent literature to describe wealth distribution \cite{MR4147945,MR2551376,pareschi2014wealth}, degree of knowledge \cite{MR3544858,MR3579566}, degree of connectivity of an agent in a network \cite{albi2017opinion,burger2021network}, and also applications to opinion formation \cite{MR3420842}, just to name a few.
Comparing to these other approaches, our mean-field approximation \eqref{e:limit-cost-intro}, \eqref{e:conteq-2-intro} features a more profound interplay between the variable $\lambda$ and the spatial distribution $x$ of the agents, resulting in a higher flexibility of the model: not only is $\lambda$ changing in time, but its variation is driven by an optimality principle steered by the controls.

We present some applications in Section~\ref{s:appl} in the context of opinion dynamics, where $\lambda$ represents the transient degree of leadership of the agents. 
Specifically, in the former example we highlight the emergence of leaders and how this can be exploited by a policy maker; in the latter, two competing populations of leaders with different targets and campaigning styles are considered, and the effect of the control action in favoring one of them is analyzed.

\smallskip

The plan of the paper is the following: in Section~\ref{s:math} we introduce the functional setting of the problem and we list the standing assumptions on the velocity field $v$, on the transition operator~$\T$, and on the cost functions $\cL_N$, $\cL$, and $\phi$; in Section~\ref{s:finite-particle-problem} we present and discuss the existence of solutions to the finite-particle control problem; in Section~\ref{s:Gamma-limit} we introduce the mean-field control problem and prove the main theorem on the $\Gamma$-convergence to the continuous problem. In Section~\ref{s:appl} we discuss the applications mentioned above.

\subsection{Technical aspects}
We highlight the main technical aspects of the proof of Theorem~\ref{t:Gamma-convergence}. The $\Gamma$-liminf inequality builds upon a compactness property of sequences of empirical measures~$\Psi^{N}_{t}$ with uniformly bounded cost~$\E_{N}$. The hypotheses on the velocity field~$v$ and on the transition operator~$\T$ in~\eqref{1.1} (see Section~\ref{s:math}) imply, by a Gr\"onwall-type argument, a uniform-in-time estimate of the support of~$\Psi^{N}_{t}$, ensuring the convergence to a limit~$\Psi_{t}$. The lower bound and the identification of the control field $w$ are consequences of the convergence of~$\mathcal{L}_{N}$ to~$\mathcal{L}$ and of the convexity and superlinear growth of the cost function~$\phi$. 
As for the $\Gamma$-limsup inequality, we remark that the sole integrability of~$w$ (contrary to the situation considered in \cite{FS2014}) does not guarantee the existence of a flow map for the associated Cauchy problem
\begin{equation}\label{e:some-intro}
\begin{cases}
\dot{x}_{i} = v(y_{i}, \y) + h(y_{i}, \y) w(t, y_{i}) \,,\\
\dot{\lambda}_{i} = \T(y_{i}, \y) \,,\\
y_{i}(0) = y_{0,i}\,,
\end{cases}
\end{equation}
and therefore does not allow for a direct construction of a recovery sequence based on the analysis of~\eqref{e:some-intro} due to the lack of continuity with respect to the data. Following the main ideas of~\cite{Flos}, we base our approximation strategy on the superposition principle~\cite[Theorem~5.2]{AmbForMorSav18}, \cite[Theorem~3.11]{MorSol19} (see also \cite{AmbGigSav08,AmbTre14,CCDMP2021,Smirnov93}), which indeed selects a sequence of trajectories $\z^{N}$ such that the corresponding empirical measures~$\Lambda^{N}_{t} \coloneqq \frac{1}{N} \sum_{i=1}^{N} \delta_{z^{N}_{i}(t)}$ converge to~$\Psi_{t}$ and the cost $\E_N(\z^N,\uu^N)$ converges to $\E(\Psi,w)$, where we have set~$u_{i}^{N}(t) \coloneqq w(t, z^{N}_{i}(t))$. The explicit dependence of~\eqref{e:some-intro} on the global state of the system calls for a further modification of the trajectories~$\z^{N}$. Here, the fact that~$h$ may take the value~$0$ introduces an additional technical difficulty as we cannot exploit the linear dependence on the controls in~\eqref{1.1}.
To overcome this problem, we resort once again to the local Lipschitz continuity of~$v$ and of~$\T$, and construct the trajectories $\y^{N}$ by solving the Cauchy problem
\begin{equation}\label{e:some-intro-2}
\begin{cases}
\dot{x}_{i} = v(y_{i}, \y) + h(y_{i}, \y) u_{i} \,,\\
\dot{\lambda}_{i} = \T(y_{i}, \y) \,,\\
y_{i}(0) = y_{0,i} \,. 
\end{cases}
\end{equation}
By Gr\"onwall estimates, we can conclude that the distance between $\Lambda^{N}_{t}$ and the empirical measure~$\Psi^{N}_{t}$ generated by~$\y^{N}$ is infinitesimal, so that we obtain the desired convergences of $\Psi^{N}_{t}$ to~$\Psi_{t}$ and of $\E_{N}(\y^{N}, \uu^{N})$ to $\E(\Psi, w)$. Let us also mention that the symmetry of the cost is used in a crucial way to deal with the initial conditions in~\eqref{e:some-intro-2}.


\section{Mathematical setting}\label{s:math}

In this section we introduce the mathematical framework and notation to study our system.
\smallskip

\paragraph{\textbf{Basic notation.}} 
Given a metric space~$(X,\sfd_X)$, we denote by~$\mathcal{M}(X)$ the space of signed Borel measures~$\mu$ in~$X$ with finite total variation~$\| \mu \|_{\mathrm{TV}}$,
by~$\mathcal{M}_+(X)$ and~$\cP(X)$ the convex subsets of nonnegative measures and probability measures, respectively. 
We say that~$\mu \in \cP_c(X)$ if $\mu \in \Pp(X)$ and the support~$\spt \, \mu$ is a compact subset of~$X$. 
For any~$K \subseteq X$, the symbol~$\Pp(K)$ denotes the set of measures~$\mu \in \Pp(X)$ such that~$\spt\, \mu \subseteq K$.
Moreover, $\mathcal{M}(X;\R^d)$ denoted the space of $\R^d$-valued Borel measures with finite total variation.

As usual, if~$(Z, \sfd_{Z})$ is another metric space, for every~$\mu\in\mathcal{M}_+(X)$ and every~$\mu$-measurable function~$f\colon X\to Z$, we define the push-forward measure~$f_\#\mu\in\mathcal{M}_+(Z)$ by $(f_\#\mu)(B)\coloneqq \mu(f^{-1}(B))$, for any Borel set~$B\subseteq Z$.



For a Lipschitz function~$f\colon X\to\mathbb{R}$ we define its Lipschitz constant by
\begin{equation*}
\mathrm{Lip}(f)\coloneqq\sup_{x,y\in X \atop x\neq y}\frac{|f(x)-f(y)|}{\sfd_X(x,y)}
\end{equation*}
and we denote by~$\mathrm{Lip}(X)$ and~$\mathrm{Lip}_b(X)$  the spaces of Lipschitz and bounded Lipschitz functions on~$X$, respectively. Both are normed spaces with the norm~$\lVert f\rVert_{\mathrm{Lip}} \coloneqq \lVert f\rVert_\infty+ \mathrm{Lip}(f)$, where~$\lVert\cdot\rVert_\infty$ is the supremum norm. Furthermore, we use the notation~$\mathrm{Lip}_{1}(X)$ for the set of functions $f \in \mathrm{Lip}_{b} (X)$ such that~$\mathrm{Lip}(f) \leq 1$.

In a complete and separable metric space~$(X,\sfd_X)$, we shall use the Wasserstein distance~$W_1$ in the set~$\cP(X)$, defined as
\begin{equation*}
W_1(\mu,\nu)\coloneqq\sup\bigg\{\int_X\varphi\,\mathrm{d}\mu-\int_X\varphi\,\mathrm{d}\nu: \varphi\in\mathrm{Lip}_1(X) \bigg\} \,.
\end{equation*}
Notice that~$W_1(\mu,\nu)$ is finite if~$\mu$ and~$\nu$ belong to the space
\begin{equation*}
\cP_1(X)\coloneqq \bigg\{\mu\in\cP(X):  \text{$\int_X \sfd_X(x,\bar x)\,\mathrm{d}\mu(x)<+\infty$ for some $\bar x\in X$}\bigg\}
\end{equation*}
and that~$(\cP_1(X),W_1)$ is a complete metric space if~$(X,\sfd_X)$ is complete.

If~$(E, \| \cdot\|_{E})$ is a Banach space and~$\mu \in\mathcal{M}_+(E)$, we define the first moment~$m_1(\mu)$ as
\begin{equation*}
m_1(\mu)\coloneqq\int_{E} \lVert x \rVert_E\,\mathrm{d}\mu\,.
\end{equation*}
Notice that for a probability measure~$\mu$ finiteness of the integral above is equivalent to $\mu \in \cP_1(E)$, whenever~$E$ is endowed with the distance induced by the norm~$\lVert\cdot  \rVert_{E}$.
Furthermore, the notation~$C^1_b(E)$ will be used to denote the subspace of~$C_b(E)$ of functions having bounded continuous Fr\'echet differential at each point. The symbol~$\nabla$ will be used to denote the Fr\'echet differential. In the case of a function~$\phi \colon [0,T]\times E \to \mathbb{R}$, the symbol~$\partial_t$ will be used to denote partial differentiation with respect to~$t$. 
\smallskip

\paragraph{\textbf{Functional setting.}} We consider a set of pure strategies~$U$, where~$U$ is a compact metric space, and we denote by~$Y\coloneqq \R^{d} \times \Pp(U)$ the state-space of the system. 

According to the functional setting considered in~\cite{AmbForMorSav18,MorSol19}, we consider the space $\overline{Y}\coloneqq \R^{d} \times \F(U)$, where we have set (see, e.g.,~\cite{AP,MR81458} and \cite[Chapter~3]{MR3792558})
\begin{equation}\label{P1}
\F(U) \coloneqq \overline{ \spann ( \Pp (U) ) }^{\|\cdot\|_{\mathrm{BL}}} \subseteq (\mathrm{Lip}(U))'.
\end{equation}
The closure in~\eqref{P1} is taken with respect to the \emph{bounded Lipschitz} norm $\lVert\cdot\rVert_{\BL}$, defined as
\begin{equation*}
\lVert \mu \rVert_{\BL}\coloneqq\sup \big\{\langle \mu,\varphi\rangle:  \varphi\in \Lip(U),  \|\varphi\|_{\Lip}\leq 1\big\} \qquad \text{for every $\mu\in(\Lip(U))'$}\,.
\end{equation*}
We notice that, by definition of~$\| \cdot\|_{\mathrm{BL}}$, we always have
\begin{displaymath}
\| \mu \|_{\mathrm{BL}} \leq \| \mu \|_{\mathrm{TV}} \qquad \text{for every $\mu \in \mathcal{M}(U)$}\,,
\end{displaymath}
in particular, $\| \lambda \|_{\mathrm{BL}} \leq 1$ for every $\lambda \in \Pp(U)$.

Finally, we endow $\overline{Y}$ with the norm $\lVert y\rVert_{\overline Y}=\lVert(x,\lambda)\rVert_{\overline Y}\coloneqq \lvert x \rvert+\lVert \lambda \rVert_{\BL}$ .

For every $R>0$, we denote 
by~$\B_R^Y$ the closed ball of radius~$R$ in~$Y$, namely $\B_R^Y=\{y\in Y:\lVert y\rVert_{\overline Y}\leq R\}$ and notice that, in our setting,~$\B^{Y}_{R}$ is a compact set. 

As in~\cite{MorSol19}, we consider, for every~$\Psi \in \Pp_{1} (Y)$, a velocity field $v_{\Psi} \colon Y \to \R^{d}$ such that
\begin{itemize}
\item[($v_1$)] for every $R>0$, $v_{\Psi} \in \mathrm{Lip} (\B^{Y}_{R}; \R^{d})$ uniformly with respect to~$\Psi \in \Pp( \B^{Y}_{R})$, i.e., there exists~$L_{v, R}>0$ such that
\begin{displaymath}
| v_{\Psi} (y_{1}) - v_{\Psi}(y_{2}) | \leq L_{v, R} \| y_{1} - y_{2} \|_{\overline{Y}} \qquad \text{for every $y_{1}, y_{2} \in Y$}\,;
\end{displaymath}

\item[($v_2$)] for every $R>0$ there exists $L_{v, R}>0$ such that for every $\Psi_{1}, \Psi_{2} \in \Pp(\B^{Y}_{R})$ and every $y \in \B^{Y}_{R}$
\begin{displaymath}
| v_{\Psi_{1}} (y) - v_{\Psi_{2}} (y) | \leq L_{v, R} W_{1} (\Psi_{1}, \Psi_{2})\,;
\end{displaymath}

\item[($v_3$)] there exists $M_{v}>0$ such that for every $y \in Y$ and every $\Psi \in \Pp_{1}(Y)$
\begin{displaymath}
| v_{\Psi} (y) | \leq M_{v} \big( 1 + \| y \|_{\overline{Y}} + m_{1} ( \Psi) \big) \,.
\end{displaymath}
\end{itemize}

As for~$\T$, for every $\Psi \in  \Pp_{1} (Y)$ we assume that the operator $\T_{\Psi} \colon Y \to \F(U)$ is such that
\begin{itemize}

\item[($\T_0$)] for every $(y , \Psi) \in Y \times \Pp_{1}( Y )$, the constants belong to the kernel of~$\T_{\Psi}(y)$, i.e., 
\begin{displaymath}
\left\langle \T_{\Psi}(y), 1 \right\rangle_{\F(U), \mathrm{Lip}(U)} = 0 \,,
\end{displaymath}
where $\langle\cdot,\cdot\rangle$ denoted the duality product;

\item[($\T_1$)] there exists $M_{\T}>0$ such that for every $y \in Y$ and every $\Psi \in \Pp_{1}(Y)$
\begin{displaymath}
\| \T_{\Psi}(y) \|_{\mathrm{BL}} \leq M_{\T} \big( 1 + \| y \|_{\overline{Y}} + m_{1} ( \Psi) \big)\,;
\end{displaymath}

\item[($\T_2$)] for every $R>0$, there exists $L_{\T, R}>0$ such that for every $(y_{1}, \Psi_{1}), (y_{2}, \Psi_{2}) \in \B^{Y}_{R} \times \Pp (\B^{Y}_{R})$
\begin{displaymath}
\| \T_{\Psi_{1}} ( y_{1} ) - \T_{\Psi_{2}} (y_{2} ) \|_{{\rm BL}} \leq L_{\T, R} \big( \| y_{1} - y_{2} \|_{\overline{Y}} + W_{1} (\Psi_{1}, \Psi_{2}) \big)\,;
\end{displaymath}

\item[($\T_3$)] for every $R>0$ there exists $\delta_{R}>0$ such that for every $(y, \Psi) \in \B^{Y}_{R} \times \Pp_{1}(Y)$ we have
\begin{displaymath}
\T_{\Psi} (y) + \delta_{R} \lambda \geq 0\,.
\end{displaymath}
\end{itemize}

For every $y  \in Y$ and every $\Psi \in \Pp_{1}(Y)$ we set $
b_{\Psi}(y)  \coloneqq 
\left(\begin{array}{cc}
\displaystyle v_{\Psi}(y) \\ [1mm]
\displaystyle \T_{\Psi} (y)
\end{array}\right)
$,
which is the velocity field driving the evolution; we also consider an
activation function $h_{\Psi} \colon \overline{Y} \to [0,+\infty)$ satisfying:
\begin{itemize}
\item[($h_{1}$)] $h_{\Psi}$ is bounded in~$\overline{Y}$ uniformly with respect to~$\Psi \in \sP_{1}(Y)$;

\item [($h_{2}$)] for every $R>0$ there exists $L_{h, R}>0$ such that for every~$\Psi_{1}, \Psi_{2} \in \sP_{1}(\B^{Y}_{R})$ and every $y_{1}, y_{2} \in \B^{Y}_{R}$
\begin{displaymath}
| h_{\Psi_{1}} (y_{1}) - h_{\Psi_{2}} (y_{2}) | \leq L_{h, R} \big( \| y_{1} - y_{2} \|_{\overline{Y}} + W_{1}( \Psi_{1}, \Psi_{2}) \big)\,.
\end{displaymath}
\end{itemize}

In order to define the optimal control problems of Sections~\ref{s:finite-particle-problem} and~\ref{s:Gamma-limit}, we have to introduce some further notation. For every $N \in \mathbb{N}$, we define
\begin{displaymath}
\sP^{N}(Y) \coloneq  \bigg\{ \Psi \in \sP(Y):\ \text{there exist $y_{1}, \ldots, y_{N} \in Y$ such that $\Psi = \frac{1}{N} \sum_{i=1}^{N}\delta_{y_{i}}$} \bigg\}\,.
\end{displaymath}
In particular, we notice that, up to a permutation, every $N$-tuple $\y^{N}\coloneq (y_{1}, \ldots, y_{N}) \in Y^{N}$ can be identified with an element $\Psi \in \sP^{N}(Y)$. We now give the following two definitions (see also~\cite[Definition~2.1]{Flos}.

\begin{definition}\label{d:sym}
For every $N \in \mathbb{N}$, we say that a map $F_{N} \colon Y \times Y^{N} \to [0,+\infty)$ is symmetric if $F_{N}(y , \y) = F_{N}( y, \sigma(\y))$ for every $y \in Y$, every $\y \in Y^N$, and every permutation $\sigma \colon Y^{N} \to Y^{N}$.
\end{definition}

\begin{remark}\label{r:sym}
Notice that by symmetry and by the identifying $\y^{N}\coloneq (y_{1}, \ldots, y_{N}) \in Y^N$ with $\Psi^{N} = \frac{1}{N} \sum_{i=1}^{N} \delta_{y_{i}}$ we may write $F_{N}(y, \Psi^{N})$ for $F_{N}(y, \y^{N})$.
\end{remark}

\begin{definition}
Let $F_{N} \colon Y \times Y^{N} \to [0,+\infty)$ be symmetric. We say that $F_{N}$ $\sP_{1}$-converges to~$F \colon Y \times \sP_1(Y) \to [0,+\infty)$ uniformly on compact sets as $N\to \infty$ if for every subsequence~$N_{k}$ and every sequence $\Psi_{k} \in \sP^{N_{k}} (Y)$ narrowly converging to~$\Psi$ in $\sP_1(Y)$ we have
\begin{displaymath}
\lim_{k \to \infty} \, \sup_{y \in K}\, | F_{N_{k}}(y, \Psi_{k}) - F(y, \Psi) | = 0 \qquad \text{for every compact subset $K$ of~$Y$}.
\end{displaymath}
\end{definition}

For the cost functionals for the finite particle control problem and for their mean-field limit we consider the functions $\phi \colon \R^{d} \to [0, + \infty)$, $\cL_{N} \colon Y \times Y^{N} \to [0,+\infty)$, and $\cL\colon Y \times \sP_{1}(Y) \to [0,+\infty)$ such that
\begin{itemize}
\item[($\phi_{1}$)]\label{phi_{1}} $\phi$ is convex and superlinear with $\phi(0) = 0$;

\item[($\cL_{1}$)]\label{L_{1}} $\cL_{N}$ is continuous and symmetric;

\item[($\cL_{2}$)] $\cL_{N}$ $\sP_{1}$-converges to~$\cL$ uniformly on compact sets;

\item[($\cL_{3}$)] for every $R>0$, $\mathcal{L}$ is continuous on $\B^{Y}_{R} \times \sP(\B^{Y}_{R})$.
\end{itemize}


\section{The finite particle control problem}
\label{s:finite-particle-problem}

We now introduce the finite particle control problem. We fix a compact and convex subset~$K$ of~$\R^{d}$ of admissible controls with $0 \in K$. For every $N \in \mathbb{N}$ and every control function $u_{i} \in L^{1}([0,T]; K)$, $i=1, \ldots, N$, the dynamics of the $N$-particles system is driven by the Cauchy problem
\begin{equation}
\label{e:Cauchy}
\left\{ \begin{array}{ll}
\dot{y}_{i} (t) = b_{\Psi^{N}_{t}} (y_{i}(t)) + \left( \begin{array}{cc}
h_{\Psi^{N}_{t}} (y_{i}(t)) u_{i}(t) \\
0
\end{array}\right) & \text{for $i=1, \ldots, N$}\,,\\[2mm]
y_{i}(0) = y_{0,i} \in Y\,,
\end{array} \right.
\end{equation}
where we have set $\Psi^{N}_{t} \coloneq \frac{1}{N} \sum_{i=1}^{N} \delta_{y_{i}(t)} \in \sP^N (Y)$. For simplicity of notation, we set $\uu^{N}(t) \coloneq ( u_{1}(t), \ldots, u_{N}(t)) \in K^{N}$ for every $t \in [0,T]$. In view of~\cite[Section~I.3, Theorem~1.4, Corollary~1.1]{MR0348562} (see also~\cite[Theorem~B.1]{AmbForMorSav18} and \cite[Corollary~2.3]{MorSol19}), the Cauchy problem~\eqref{e:Cauchy} admits a unique solution $\y^{N} \coloneq ( y_{1}, \ldots, y_{N}) \in AC([0,T] ;Y^{N})$, which is also identified with the empirical measure~$\Psi^{N}_{t}$, up to a permutation. To ease the notation in our analysis, we give the following definition.
\begin{definition}
\label{d:generate}
We say that $(\y^{N}, \uu^{N}) \in AC([0,T]; Y^{N}) \times L^{1}([0,T]; K^{N})$ generates the pairs $(\Psi^{N}, \nnu^{N}) \in AC([0,T]; ( \sP^{N}(Y) ; W_{1})) \times \mathcal{M}([0,T]\times \overline{Y}; \R^{d})$ if $\Psi^{N} = \Psi^{N}_{t} \otimes \mathcal{L}^{1}\res[0,T]$ with $\Psi^{N}_{t} = \frac{1}{N} \sum_{i=1}^{N} \delta_{y_{i}(t)}$ and $\nnu^{N} = \nnu^{N}_{t} \otimes \mathcal{L}^{1} \res[0,T]$ with
\begin{displaymath}
\nnu^{N}_{t} \coloneq  \frac{1}{N} \sum_{i=1}^{N} h_{\Psi^{N}_{t}} (\cdot) u_{i}(t) \delta_{y_{i} (t) }( \cdot )\,,
\end{displaymath} 
where~$\mathcal{L}^{1}\res[0,T]$ denotes the Lebesgue measure on~$\R$ restricted to the interval~$[0,T]$.

In a similar way, if~$\y^{N}_{0}= (y_{0,1}, \ldots, y_{0,N}) \in Y^{N}$, we say that $\y_{0}^{N}$ generates~$\Psi^{N}_{0} \in \sP^{N}(Y)$ if $\Psi^{N}_{0} = \frac{1}{N} \sum_{i=1}^{N} \delta_{y_{0,i}}$.
\end{definition}

Given $\y^{N}_{0} = (y_{0,1}, \ldots, y_{0,N}) \in Y^{N}$, we define the set of couples trajectory-control solving the Cauchy problem~\eqref{e:Cauchy} as 
\begin{equation}
\label{e:S1}
\mathcal{S}(\y^{N}_{0}) \coloneq \big\{ (\y, \uu) \in AC([0,T]; Y^{N}) \times L^{1}([0,T]; K^{N}): \, (\y, \uu) \text{ solves~\eqref{e:Cauchy}}\big\}\,.
\end{equation}

Given functions $\phi$, $\cL_{N}$, and~$\cL$ satisfying conditions~$(\phi_{1})$,~$(\cL_{1})$, and~$(\cL_{2})$, for every initial condition~$\y^{N}_{0} \in Y^{N}$ and every $(\y, \uu) \in AC([0,T]; Y^{N}) \times L^{1} ( [ 0,T] ; K^{N} ) $,  we define the cost functional 
\begin{equation}
\label{e:cost-N-particles}
\!\!\!\! \E_{N}^{\y^{N}_{0}} ( \y, \uu) \coloneq \left\{ \begin{array}{ll}
\displaystyle  \frac{1}{N} \sum_{i=1}^{N} \fint_{0}^{T} \cL_{N}( y_{i}(t) , \Psi^{N}_{t} ) \, \di t + \frac{1}{N}  \sum_{i=1}^{N} \fint_{0}^{T} \phi(u_{i}(t)) \, \di t & \text{if $(\y, \uu) \in \mathcal{S}(\y^{N}_{0})$} \\[2mm]
\displaystyle \vphantom{\int}  +\infty & \text{otherwise},
\end{array}\right.
\end{equation}
where $(\Psi^{N}, \nnu^{N})$ is the pair generated by~$(\y, \uu)$. Therefore, the optimal control problem for the $N$-particle system reads as follows:
\begin{equation}
\label{e:min-N-particles}
\min \Big \{ \E_{N}^{\y^{N}_{0}}  ( \y, \uu) : \ \text{$(\y, \uu) \in AC([0,T]; Y^{N}) \times L^{1}([0,T]; K^{N})$} \Big\}\,.
\end{equation}

We now prove the existence of solutions of the minimum problem~\eqref{e:min-N-particles}. First, we state the boundedness of the trajectories~$\y$ for given control and initial datum, which will also be useful in the $\Gamma$-convergence analysis of Section~\ref{s:Gamma-limit}.

\begin{proposition}
\label{p:bound-N-particles}
For every $N \in \mathbb{N}$, every initial datum $\y^{N}_{0}= (y_{0,1}, \ldots , y_{0,N}) \in Y^{N}$, and every $(\y^{N}, \uu^{N}) \in \mathcal{S}(\y^{N}_{0})$ we have
\begin{equation}
\label{e:bound-N-particles}
\sup_{i=1, \ldots, N}\, \| y_{i}  \|_{L^{\infty}([0,T]; \overline Y)} \leq C\, \sup_{i=1, \ldots, N}\, \| y_{0,i}\|_{\overline Y}
\end{equation}
for a positive constant~$C$ independent of~$N$.
\end{proposition}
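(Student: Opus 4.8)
The plan is to carry out the Gr\"onwall-type argument announced in the technical overview, being careful to track the dependence on $N$ so as to obtain a uniform constant. Fix $(\y^{N}, \uu^{N}) \in \mathcal{S}(\y^{N}_{0})$; since the solution is absolutely continuous, I would first estimate the growth of a single trajectory. Writing $y_{i}(t) = (x_{i}(t), \lambda_{i}(t))$ and recalling $\norm{y_{i}}_{\overline{Y}} = |x_{i}| + \norm{\lambda_{i}}_{\BL}$, for a.e.\ $t$ one has
\begin{equation*}
\frac{\di}{\di t} \norm{y_{i}(t)}_{\overline{Y}} \leq |\dot x_{i}(t)| + \norm{\dot\lambda_{i}(t)}_{\BL} \leq |v_{\Psi^{N}_{t}}(y_{i})| + h_{\Psi^{N}_{t}}(y_{i})\,|u_{i}| + \norm{\T_{\Psi^{N}_{t}}(y_{i})}_{\BL}.
\end{equation*}
Invoking the growth bounds $(v_{3})$ and $(\T_{1})$, the uniform bound $M_{h} \coloneqq \sup_{\Psi} \norm{h_{\Psi}}_{\infty}$ from $(h_{1})$, and the compactness of $K$ (so that $|u_{i}(t)| \leq C_{K} \coloneqq \max_{u \in K}|u|$), I would obtain
\begin{equation*}
\frac{\di}{\di t} \norm{y_{i}(t)}_{\overline{Y}} \leq (M_{v} + M_{\T})\big( 1 + \norm{y_{i}(t)}_{\overline{Y}} + m_{1}(\Psi^{N}_{t}) \big) + M_{h} C_{K}.
\end{equation*}

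The decisive point for $N$-independence is that the mean-field term enters only as an \emph{average}. Setting $R(t) \coloneqq \max_{i=1,\dots,N} \norm{y_{i}(t)}_{\overline{Y}}$, one has $m_{1}(\Psi^{N}_{t}) = \frac{1}{N}\sum_{j} \norm{y_{j}(t)}_{\overline{Y}} \leq R(t)$, so the empirical first moment self-bounds by $R(t)$ with no factor of $N$. Integrating the above inequality in time, bounding $\norm{y_{i}(s)}_{\overline{Y}} \leq R(s)$ and $m_{1}(\Psi^{N}_{s}) \leq R(s)$ on the right, and then taking the maximum over $i$ (the right-hand side being independent of $i$), I would arrive at
\begin{equation*}
R(t) \leq R(0) + aT + b \int_{0}^{t} R(s)\,\di s, \qquad a \coloneqq (M_{v}+M_{\T}) + M_{h} C_{K}, \quad b \coloneqq 2(M_{v}+M_{\T}),
\end{equation*}
with $a, b$ independent of $N$. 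The integral form of Gr\"onwall's lemma then yields $R(t) \leq (R(0) + aT)\,e^{bT}$ for every $t \in [0,T]$.

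To turn this additive estimate into the purely multiplicative bound~\eqref{e:bound-N-particles}, I would exploit that each $\lambda_{0,i}$ is a probability measure: testing the definition of $\norm{\cdot}_{\BL}$ against the constant $\varphi \equiv 1$ (which satisfies $\norm{\varphi}_{\Lip} = 1$) gives $\norm{\lambda_{0,i}}_{\BL} \geq \langle \lambda_{0,i}, 1\rangle = 1$, whence $\norm{y_{0,i}}_{\overline{Y}} = |x_{0,i}| + \norm{\lambda_{0,i}}_{\BL} \geq 1$ and therefore $R(0) = \sup_{i} \norm{y_{0,i}}_{\overline{Y}} \geq 1$. Consequently $R(0) + aT \leq (1 + aT)\,R(0)$, and taking the supremum over $t \in [0,T]$ gives
\begin{equation*}
\sup_{i} \norm{y_{i}}_{L^{\infty}([0,T];\overline{Y})} = \sup_{t \in [0,T]} R(t) \leq (1+aT)\,e^{bT}\, R(0) = C \sup_{i} \norm{y_{0,i}}_{\overline{Y}},
\end{equation*}
with $C \coloneqq (1+aT)\,e^{bT}$ independent of $N$, which is the claim.

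I expect the only genuinely delicate aspect to be the uniformity in $N$: the argument hinges on estimating the \emph{maximum} $R(t)$ rather than, say, the sum $\sum_{i} \norm{y_{i}}_{\overline{Y}}$, precisely because the coupling enters through $m_{1}(\Psi^{N}_{t})$, which is controlled by $R(t)$ with no $N$-dependence, whereas summing would reintroduce factors of $N$. A secondary subtlety is the passage from the additive Gr\"onwall bound to the multiplicative one, which rests on the normalization $\norm{\lambda_{0,i}}_{\BL} = 1$ for probability measures; in fact, since $\lambda_{i}(t)$ remains a probability measure by $(\T_{0})$ and $(\T_{3})$, the $\BL$-component of $\norm{y_{i}}_{\overline{Y}}$ stays constantly equal to $1$, so that the growth is carried entirely by the $x$-variable.
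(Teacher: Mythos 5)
Your proof is correct and follows essentially the same route as the paper: the integral (or equivalently differential) growth estimate from $(v_{3})$, $(\T_{1})$, $(h_{1})$ and the compactness of $K$, the key observation that $m_{1}(\Psi^{N}_{t})$ is bounded by the maximum $R(t)$ of the trajectories with no factor of $N$, and a Gr\"onwall argument on $R$. You are in fact slightly more careful than the paper in two respects: you correctly invoke the growth condition $(v_{3})$ (the paper's proof cites $(v_{1})$ there, a harmless slip), and you make explicit the passage from the additive Gr\"onwall bound $R(t)\leq (R(0)+aT)e^{bT}$ to the purely multiplicative form of \eqref{e:bound-N-particles} via $\|\lambda_{0,i}\|_{\BL}=1$, a step the paper leaves implicit.
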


\begin{proof}
Let $(\Psi^{N}, \nnu^{N})$ be the pair generated by~$(\y^{N},  \uu^{N})$. Since the control~$\uu^{N}$ takes values in~$K^{N}$ with~$K$ compact in~$\R^{d}$ and in view of the assumptions~$(v_{1})$, $(\T_{1})$, and~$(h_{1})$, for every $t \in [0, T]$ we estimate
\begin{align*}
\| y_{i} (t) \|_{\overline Y} & \leq \| y_{0,i} \|_{\overline Y} + \int_{0}^{t} \| b_{\Psi^{N}_{\tau}} (y_{i} (\tau)) \|_{\overline Y} \, \di t + \int_{0}^{t} | h_{\Psi^{N}_{\tau}}(y_{i} (\tau)) u_{i} (\tau) | \, \di \tau
\\
&
\leq \| y_{0,i} \|_{\overline Y} + \int_{0}^{t} (M_{v} + M_{\T}) ( 1 + \| y_{i}(\tau)\|_{\overline Y} + m_{1}(\Psi^{N}_{\tau}) ) \, \di \tau +  \overline{C}t
\\
&
\leq \| y_{0,i} \|_{\overline Y} + \int_{0}^{t} (M_{v} + M_{\T}) \Big( 1 + \| y_{i} (\tau)\|_{\overline Y} + \sup_{j=1, \ldots, N}\, \| y_{j} (\tau) \|_{\overline Y} \Big) \, \di \tau + \overline{C}t\,,
\end{align*}
for some positive constant~$\overline{C}$ depending only on~$h$ and~$K$. Taking the supremum over $i\in \{1, \ldots, N\}$ in the previous inequality and applying Gr\"onwall inequality we deduce~\eqref{e:bound-N-particles}. 
\end{proof}

\begin{proposition}
\label{p:N-particles}
For every $N \in \mathbb{N}$ and every initial datum $\y^{N}_{0} \in Y^{N}$, the minimum problem~\eqref{e:min-N-particles} admits a solution  $(\y^{N}, \uu^{N})$. If $(\Psi^{N}, \nnu^{N})$ is the pair generated by~$(\y^{N}, \uu^{N})$, then also the pair $(\y^{N}, \tilde{\uu}^{N})$ where
\begin{equation}\label{e:redef}
\tilde{u}_{i}(t) = \left\{ \begin{array}{ll}
u_{i}(t) & \text{if $h_{\Psi^{N}_{t}} (y_{i}(t)) \neq 0$}\,,\\
0 & otherwise\,,
\end{array}\right.\qquad \text{$i=1, \ldots, N$}
\end{equation}
is a solution of~\eqref{e:min-N-particles}. 
If the cost function~$\phi$ satisfies $\{ \phi=0\} = \{0\}$, then every solution~$(\y^{N}, \uu^{N})$ of~\eqref{e:min-N-particles} satisfies $u_{i}(t)=0$ a.e.~on~$\{t\in [0,T]: \, h_{\Psi^{N}_{t}} (y_{i}(t)) = 0\}$ for $i= 1, \ldots, N$.
\end{proposition}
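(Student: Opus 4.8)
The plan is to establish the three assertions in turn: existence of a minimizer by the direct method, optimality of the truncated control $\tilde\uu^N$, and the vanishing of optimal controls off the support of $h$ under the extra hypothesis on $\phi$.

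For existence I would argue as follows. The infimum in \eqref{e:min-N-particles} is finite, since the admissible pair obtained by solving \eqref{e:Cauchy} with $\uu^N\equiv 0$ has finite cost: $\phi(0)=0$ and, by Proposition~\ref{p:bound-N-particles}, the trajectories remain in a fixed ball $\B^{Y}_{R}$ on which $\cL_N$ is bounded. Let $(\y^k,\uu^k)$ be a minimizing sequence. Since $K$ is compact the controls are uniformly bounded, hence equi-integrable, and by the Dunford--Pettis theorem we may extract a subsequence with $\uu^k\rightharpoonup\uu^N$ weakly in $L^1([0,T];\R^{dN})$; as $K$ is closed and convex, the limit takes values in $K^N$ a.e.\ (Mazur). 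On the other hand, Proposition~\ref{p:bound-N-particles} confines all trajectories to a fixed ball $\B^{Y}_{R}$, and the growth bounds $(v_3)$, $(\T_1)$, $(h_1)$ together with the compactness of $K$ bound $\dot\y^k$ uniformly; thus the $\y^k$ are equi-Lipschitz, and by the Ascoli--Arzel\`a theorem in the compact set $\B^{Y}_{R}$ a further subsequence converges uniformly to some $\y^N$.

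It remains to pass to the limit in the dynamics and in the cost. Writing \eqref{e:Cauchy} in integral form, the uniform convergence $\y^k\to\y^N$ yields $W_1(\Psi^{N,k}_t,\Psi^{N}_t)\to 0$ uniformly in $t$, so by $(v_1)$--$(v_2)$ and $(\T_2)$ the drift terms converge pointwise and boundedly. The delicate term is $h_{\Psi^{N,k}_t}(y_i^k(t))\,u_i^k(t)$: by $(h_1)$--$(h_2)$ the coefficient $t\mapsto h_{\Psi^{N,k}_t}(y_i^k(t))$ converges uniformly and stays uniformly bounded, while $u_i^k\rightharpoonup u_i^N$ only weakly in $L^1$; a weak--strong convergence argument then gives that $t\mapsto h_{\Psi^{N,k}_t}(y_i^k(t))\,u_i^k(t)$ converges weakly in $L^1$ to $t\mapsto h_{\Psi^{N}_t}(y_i^N(t))\,u_i^N(t)$, which suffices to pass to the limit in the integral equation and conclude $(\y^N,\uu^N)\in\mathcal{S}(\y_0^N)$. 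As for the cost, the Lagrangian part converges by the continuity $(\cL_1)$ of $\cL_N$ and the uniform convergences above, whereas the convexity of $\phi$ in $(\phi_1)$ gives weak $L^1$ lower semicontinuity of $\uu\mapsto\frac1N\sum_i\fint_0^T\phi(u_i)\,\di t$; hence $\E_N^{\y_0^N}(\y^N,\uu^N)\le\liminf_k\E_N^{\y_0^N}(\y^k,\uu^k)$ and $(\y^N,\uu^N)$ is a minimizer. I expect this weak--strong passage in the activation term, combined with the lower semicontinuity of the convex integrand, to be the main technical point.

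For the second assertion, observe that $\tilde\uu^N$ in \eqref{e:redef} satisfies $h_{\Psi^N_t}(y_i(t))\,\tilde u_i(t)=h_{\Psi^N_t}(y_i(t))\,u_i(t)$ for every $t$, since the two agree where $h_{\Psi^N_t}(y_i(t))\neq 0$ and both vanish where $h_{\Psi^N_t}(y_i(t))=0$. Thus the right-hand side of \eqref{e:Cauchy} is unchanged, the trajectory $\y^N$ (and hence $\Psi^N$) solves the system with control $\tilde\uu^N$, so $(\y^N,\tilde\uu^N)\in\mathcal{S}(\y_0^N)$ with the same Lagrangian cost as $(\y^N,\uu^N)$. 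Since moreover $\phi(\tilde u_i(t))\le\phi(u_i(t))$ pointwise (with equality where $h\neq 0$, and $\phi(0)=0\le\phi(u_i)$ where $h=0$), we obtain $\E_N^{\y_0^N}(\y^N,\tilde\uu^N)\le\E_N^{\y_0^N}(\y^N,\uu^N)$, so $\tilde\uu^N$ is optimal as well. Finally, if $\{\phi=0\}=\{0\}$ and $(\y^N,\uu^N)$ is any minimizer, comparing it with $(\y^N,\tilde\uu^N)$ forces $\sum_i\fint_0^T[\phi(u_i)-\phi(\tilde u_i)]\,\di t=0$; each summand being nonnegative, it vanishes, and since the integrand is nonnegative we get $\phi(u_i)=\phi(\tilde u_i)$ a.e. In particular, on $\{t:h_{\Psi^N_t}(y_i(t))=0\}$ this reads $\phi(u_i(t))=\phi(0)=0$, whence $u_i(t)=0$ a.e.\ there by $\{\phi=0\}=\{0\}$.
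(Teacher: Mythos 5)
Your proposal is correct and follows essentially the same route as the paper: the direct method with weak compactness of the $K$-valued controls, Ascoli--Arzel\`a for the trajectories via Proposition~\ref{p:bound-N-particles}, a weak--strong passage to the limit in the activation term of the dynamics, convexity of $\phi$ for lower semicontinuity of the control cost, and the same truncation/comparison argument for the last two assertions. The only differences are cosmetic (weak $L^{1}$ via Dunford--Pettis rather than weak$^{*}$ $L^{\infty}$ compactness, and a more explicit justification of the step the paper dismisses as ``easy to see'').
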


\begin{proof}
Let us fix~$N \in \mathbb{N}$ and let~$\uu_{k}^{N}= (u_{k,1}, \ldots, u_{k,N}) \in L^{1}([0,T]; K^{N})$ and $\y_{k}^{N}= (y_{k,1}, \ldots, y_{k,N}) \in AC([0,T]; Y^{N})$ be a minimizing sequence for the cost functional~$\E_{N}^{\y^{N}_{0}}$. In particular, we may assume $(\y_{k}^{N}, \uu_{k}^{N}) \in \mathcal{S}(\y^{N}_{0})$ for every $k$. Let us further denote~$(\Psi^{N}_{k},\nnu^{N}_{k}) \in AC([0,T]; (\sP^{N}(Y); W_{1})) \times \mathcal{M}([0,T]\times  \overline{Y}; \R^{d})$ the pair generated by~$(\y_{k}^{N}, \uu_{k}^{N})$

Since $\uu_{k}^{N}$ takes values in~$K^{N}$ and~$K$ is compact in~$\R^{d}$, up to a subsequence we have that $\uu_{k}^{N} \rightharpoonup \uu^{N}$ weakly$^{*}$ in~$L^{\infty}([0,T]; K^{N})$. By Proposition~\ref{p:bound-N-particles}, $\y_{k}^{N}$ is bounded in~$C([0,T]; Y^{N})$. Let us fix $R>0$ such that $\| \y_{k}^{N}(t) \|_{Y^{N}} \leq R$ for $t \in [0,T]$ and $k \in \mathbb{N}$. Then, by~$(v_{1})$,~$(\T_{1})$, and~$(h_{1})$, for every $s<t \in [0,T]$, every $i=1, \ldots, N$, and every~$k$ we have that
\begin{align*}
\| y_{k,i} (t) - y_{k,i}(s) \|_{\overline Y} & \leq \int_{s}^{t} \| b_{\Psi^{N}_{k,\tau}} (y_{k,i}( \tau) ) \|_{\overline Y} \, \di \tau + \int_{s}^{t} | h_{\Psi^{N}_{k,\tau}} (y_{k,i}(\tau) ) u_{k,i}(\tau) | \, \di \tau 
\\
&
\leq \int_{s}^{t} (M_{v} + M_{\T}) ( 1 + \| y_{k,i}(\tau) \|_{\overline Y} + m_{1}(\Psi^{N}_{k,\tau}) ) \, \di \tau + C | t-s| 
\\
&
\leq 2( M_{v} + M_{\T}) (1 + R) | t - s| + C| t-s|\,. 
\end{align*}
Thus, $\y_{k}^{N}$ is bounded and equi-Lipschitz continuous in~$[0,T]$. By Ascoli-Arzel\`a Theorem, $\y_{k}^{N}$ converges uniformly to some~$\y^{N} \in C([0,T]; Y^{N})$ along a suitable subsequence, and $\y^{N}(0) = \y_{0}^{N}$. Furthermore, if $(\Psi^{N}, \nnu^{N})$ is the pair generated by~$(\y^{N}, \uu^{N})$, we also deduce that $\Psi^{N}_{k} \to \Psi^{N}$ in $C([0,T]; (\sP^{N}(Y); W_{1}))$. In view of~$(v_{2})$,~$(\T_{2})$, and~$(h_{2})$, it is easy to see that $(\y^{N}, \uu^{N}) \in \mathcal{S}(\y^{N}_{0})$.

Finally, the continuity of~$\cL_{N}$ and the convexity of~$\phi$ yield the lower semicontinuity of the cost functional~$\E_{N}^{\y_{0}^{N}}$, so that
\begin{displaymath}
\E_{N}^{\y^{N}_{0}} (\y^{N}, \uu^{N}) \leq \liminf_{k \to \infty} \, \E_{N}^{\y^{N}_{0}} ( \y_{k}^{N}, \uu_{k}^{N})
\end{displaymath} 
and $(\y^{N}, \uu^{N})$ is a solution of~\eqref{e:min-N-particles}.

The second part of the statement follows from the structure of system~\eqref{e:Cauchy}. Indeed, if we define~$\tilde{\uu}^{N}$ as in~\eqref{e:redef}, the trajectory~$\y^{N}$ solution of~\eqref{e:Cauchy} does not change and $(\y^{N},\tilde{\uu}^{N})\in\mathcal{S}(\y_{0}^{N})$. Since the cost function~$\phi$ is non-negative with $\phi(0) = 0$, it is easy to see that $\E_{N}^{\y^{N}_{0}} (\y^{N}, \tilde{\uu}^{N}) \leq \E_{N}^{\y^{N}_{0}} (\y^{N}, \uu^{N})$. Finally, if~$\{\phi=0\} = \{0\}$, the previous inequality and the minimality of~$(\y^{N}, \uu^{N})$ imply that $u_{i}(t) = \tilde{u}_{i}(t)$ for $t \in [0,T]$ and $i=1, \ldots, N$, and the proof is concluded.
\end{proof}


\section{Mean-field control problem}
\label{s:Gamma-limit}

Before introducing the mean-field optimal control problem and stating the main $\Gamma$-convergence result, we discuss the compactness of sequences of pairs trajectory-control $(\y^{N}, \uu^{N})$ with bounded energy~$\E^{\y^{N}_{0}}_{N}$. To ease the notation, given a curve $\Psi \in C([0,T]; (\sP_{1}(Y); W_{1}))$ we denote by $h_{\Psi} \Psi$ the curve $t \mapsto h_{\Psi_{t}} \Psi_{t}$. Similarly to~\eqref{e:S1} we define, for every $\widehat{\Psi}_{0} \in \sP_{c}(Y)$, the set
\begin{align}
\label{e:S2}
\mathcal{S}( \widehat{\Psi}_{0}) \coloneq \bigg\{ ( \Psi, \nnu) \in & \  AC([0,T]; (\sP_{1}(Y); W_{1})) \times   \mathcal{M}([0,T]\times  \overline{Y}; \R^{d}):
\\
&
\ \nnu \ll h_{\Psi} \Psi,\, \frac{\di \nnu}{\di (h_{\Psi}\Psi) } \in K \text{ $h_{\Psi}\Psi$- a.e., $(\Psi, \nnu)$ solves} \nonumber
\\
&
 \partial_{t} \Psi_{t} + \dive ( b_{\Psi_{t}} \Psi_{t} + \overline\nnu_{t}) = 0  \text{ with $\Psi_{0} = \widehat{\Psi}_{0}$ and $\overline \nnu_{t} = (\nnu_{t}, 0)$}\bigg\}\,. \nonumber 
\end{align}

\begin{proposition}
\label{p:mean-field-dynamics}
For $N \in \mathbb{N}$, let $\y^{N}_{0} = (y_{0,1}, \ldots, y_{0,N}) \in Y^{N}$ and $(\y^{N}, \uu^{N}) \in \mathcal{S}(\y^{N}_{0})$ with corresponding generated measures ~$\Psi^{N}_{0} \in \sP^{N}(Y)$ and $(\Psi^{N}, \nnu^{N} ) \in AC([0,T]; (\sP^{N}(Y); W_{1}))\times \mathcal{M}([0,T]\times  \overline{Y} ; \R^{d})$. Assume that $\Psi^{N}_{0} \to \widehat{\Psi}_{0} \in \sP_{c}(Y)$ in the 1-Wasserstein distance and that
\begin{equation}
\label{e:energy-bound}
\sup_{N \in \mathbb{N}} \, \E_{N}^{\y^{N}_{0}} ( \y^{N}, \uu^{N}) < +\infty\,.
\end{equation} 
Then, up to a subsequence, the curve~$\Psi^{N}$ converges uniformly in~$C([0,T]; (\sP_{1}(Y); W_{1}))$ to~$\Psi \in AC([0,T]; (\sP_{1}(Y); W_{1}))$,~$\nnu^{N}$ converges weakly$^*$ to $\nnu \in \mathcal{M}([0,T]\times  \overline{Y} ; \R^{d})$, and  $(\Psi, \nnu) \in \mathcal{S}(\widehat{\Psi}_{0})$.
\end{proposition}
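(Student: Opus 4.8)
The plan is to prove the three claims in order --- compactness of $\Psi^{N}$, compactness of $\nnu^{N}$, and membership of the limit in $\mathcal{S}(\widehat{\Psi}_{0})$ --- deferring to the last step the genuinely delicate point, namely the preservation of the structural constraint $\nnu \ll h_{\Psi}\Psi$ with $K$-valued density.

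\emph{Compactness of $\Psi^{N}$.} Since $\Psi^{N}_{0} \to \widehat{\Psi}_{0}$ in $W_{1}$ with $\widehat{\Psi}_{0}\in\sP_{c}(Y)$, the initial supports are uniformly bounded; combining this with the sublinear growth bounds $(v_{3})$, $(\T_{1})$, $(h_{1})$ and the compactness of $K$, a Gr\"onwall argument as in Proposition~\ref{p:bound-N-particles} furnishes a radius $R>0$, independent of $N$ and $t$, with $\spt \Psi^{N}_{t}\subseteq \B^{Y}_{R}$ for all $t\in[0,T]$. As $\B^{Y}_{R}$ is compact, each slice lives in the compact space $(\sP(\B^{Y}_{R});W_{1})$. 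Estimating the velocity on $\B^{Y}_{R}$ through $(v_{3})$, $(\T_{1})$, $(h_{1})$ and using the coupling matching $y_{i}(t)$ with $y_{i}(s)$, I get $W_{1}(\Psi^{N}_{t},\Psi^{N}_{s})\leq \frac1N\sum_{i}\|y_{i}(t)-y_{i}(s)\|_{\overline Y}\leq L|t-s|$ with $L$ independent of $N$, so the curves $t\mapsto\Psi^{N}_{t}$ are equi-Lipschitz. By the Ascoli--Arzel\`a theorem in $C([0,T];(\sP(\B^{Y}_{R});W_{1}))$, a subsequence converges uniformly to some $\Psi\in C([0,T];(\sP_{1}(Y);W_{1}))$, which inherits the Lipschitz bound, hence lies in $AC$, and satisfies $\Psi_{0}=\widehat{\Psi}_{0}$.

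\emph{Compactness of $\nnu^{N}$ and passage to the limit in the continuity equation.} Because $\spt\Psi^{N}_{t}\subseteq\B^{Y}_{R}$, the measures $\nnu^{N}$ are supported in the compact set $[0,T]\times\B^{Y}_{R}$, and $(h_{1})$ together with $K$ compact yields $\|\nnu^{N}\|_{\mathrm{TV}}=\int_{0}^{T}\frac1N\sum_{i}|h_{\Psi^{N}_{t}}(y_{i}(t))u_{i}(t)|\,\di t\leq C$ uniformly; thus, up to a further subsequence, $\nnu^{N}\rightharpoonup\nnu$ weakly$^{*}$. Each $(\Psi^{N},\nnu^{N})$ solves the continuity equation distributionally, and the passage to the limit in its weak formulation is linear in all terms but the drift: the $\partial_{t}$-term passes by the uniform $W_{1}$-convergence $\Psi^{N}\to\Psi$, the control term by $\nnu^{N}\rightharpoonup\nnu$, and the initial term by $\Psi^{N}_{0}\to\widehat{\Psi}_{0}$. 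For the drift $\int\nabla\varphi\cdot b_{\Psi^{N}_{t}}\,\di\Psi^{N}_{t}$ I split off $\nabla\varphi\cdot(b_{\Psi^{N}_{t}}-b_{\Psi_{t}})$, controlled on $\B^{Y}_{R}$ by $(v_{2})$ and $(\T_{2})$ via $W_{1}(\Psi^{N}_{t},\Psi_{t})$, and $\int\nabla\varphi\cdot b_{\Psi_{t}}\,\di(\Psi^{N}_{t}-\Psi_{t})$, which vanishes since $y\mapsto\nabla\varphi(y)\cdot b_{\Psi_{t}}(y)$ is bounded and continuous on $\B^{Y}_{R}$ by $(v_{1})$ and $(\T_{2})$ and $\Psi^{N}_{t}\to\Psi_{t}$ narrowly; dominated convergence in $t$ then concludes that $(\Psi,\nnu)$ solves $\partial_{t}\Psi_{t}+\dive(b_{\Psi_{t}}\Psi_{t}+\overline\nnu_{t})=0$ with $\Psi_{0}=\widehat{\Psi}_{0}$.

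\emph{The constraint $\nnu\ll h_{\Psi}\Psi$ with density in $K$ (the main obstacle).} Here weak$^{*}$ convergence alone need not preserve absolute continuity, so I would encode the constraint through a convex cone. Set $\mu^{N}:=(h_{\Psi^{N}_{t}}\Psi^{N}_{t})\otimes\di t$ and $\mu:=(h_{\Psi_{t}}\Psi_{t})\otimes\di t$. From $(h_{2})$ one has $|h_{\Psi^{N}_{t}}(y)-h_{\Psi_{t}}(y)|\leq L_{h,R}\,W_{1}(\Psi^{N}_{t},\Psi_{t})\to0$ uniformly on $\B^{Y}_{R}$, which combined with $\Psi^{N}_{t}\to\Psi_{t}$ gives $\mu^{N}\rightharpoonup\mu$ weakly$^{*}$. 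By construction the $\R^{1+d}$-valued measure $(\mu^{N},\nnu^{N})$ has polar density in the closed convex cone $C:=\{(s,p)\in\R\times\R^{d}:s\geq0,\ p\in sK\}$, since at each atom this density is proportional to $(1,u_{i})$ with $u_{i}\in K$. Its polar is $C^{\circ}=\{(a,b):a+\sigma_{K}(b)\leq0\}$, where $\sigma_{K}$ is the support function of $K$, so that $(\mu^{N},\nnu^{N})$ being $C$-valued is equivalent to $\int\psi\,\di(a\mu^{N}+b\cdot\nnu^{N})\leq0$ for every continuous $\psi\geq0$ with compact support and every $(a,b)\in C^{\circ}$. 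Each such inequality is stable under the weak$^{*}$ convergences $\mu^{N}\rightharpoonup\mu$ and $\nnu^{N}\rightharpoonup\nnu$, hence passes to the limit; letting $(a,b)$ range over a countable dense subset of $C^{\circ}$ shows that $(\mu,\nnu)$ is $C$-valued too. As $C\cap\{s=0\}=\{0\}$ (because $K$ is bounded), this means exactly $\nnu\ll\mu=h_{\Psi}\Psi$ with $\tfrac{\di\nnu}{\di(h_{\Psi}\Psi)}\in K$, and disintegrating in time recovers the constraints defining $\mathcal{S}(\widehat{\Psi}_{0})$. I expect this final step to be the crux: the pointwise constraint is not weakly$^{*}$ closed for a single measure, and it is the joint convexity of $C$ together with the convergence $\mu^{N}\rightharpoonup h_{\Psi}\Psi$ (itself relying on the uniform convergence $h_{\Psi^{N}}\to h_{\Psi}$ from $(h_{2})$) that makes the passage possible.
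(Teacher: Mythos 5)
Your proof is correct, and while the compactness of $\Psi^{N}$, the weak$^{*}$ compactness of $\nnu^{N}$, and the passage to the limit in the continuity equation follow the paper's argument essentially verbatim, your treatment of the constraint $\nnu \ll h_{\Psi}\Psi$ with $K$-valued density is genuinely different. The paper routes this step through Lemma~\ref{l:AGS}: it introduces the auxiliary measures $\mmu^{N}=\frac1N\sum_i u_i(t)\delta_{y_i(t)}\otimes\di t$ from~\eqref{e:mu}, uses the energy bound~\eqref{e:energy-bound} together with Lemma~\ref{l:control1} to bound $\overline{\Phi}(\Psi^{N},\mmu^{N})$, invokes the joint lower semicontinuity result of \cite[Lemma~9.4.3]{AmbGigSav08} to get $\mmu\ll\Psi$, places the density in $K$ by convexity and compactness of $K$ with $0\in K$, and only then multiplies by $h_{\Psi}$. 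You instead apply a polar-cone duality argument directly to the pair $(h_{\Psi^{N}}\Psi^{N},\nnu^{N})$, whose density lies in the closed convex cone $C=\{(s,p):s\geq0,\ p\in sK\}$ (note that where particles overlap the density is a convex combination of the $(1,u_i)$, so convexity of $K$ is what keeps it in $C$); the linear inequalities dual to $C$ are weak$^{*}$ stable, and $C\cap\{s=0\}=\{0\}$ forces $\nnu\ll h_{\Psi}\Psi$ with density in $K$. Your route is more elementary and self-contained: it never uses the cost $\phi$, the energy bound~\eqref{e:energy-bound}, or the external reference, so it in fact proves the structural conclusion under weaker hypotheses. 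What the paper's detour buys is the simultaneous lower semicontinuity estimate~\eqref{e:lsc-phi}, which is needed anyway for the $\Gamma$-liminf inequality, so Lemma~\ref{l:AGS} does double duty there; with your argument that estimate would still have to be established separately.
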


\begin{remark}
Since~$\nnu \ll h_{\Psi}\Psi$ for $(\Psi, \nnu) \in \mathcal{S}(\widehat{\Psi}_{0})$, there exists a function~$v \in L^{1}_{h_{\Psi}\Psi}([0,T] \times Y;\R^d)$ such that $\nnu =  v h_{\Psi}\Psi$. Furthermore, if we consider $\overline{v}(t, y) \coloneq v(t, y) \mathbf{1}_{\{ h_{\Psi} \neq0\}} (t, y)$, we still have $\nnu =  \overline{v} h_{\Psi}\Psi$. 
\end{remark}

In view of the compactness result in Proposition~\ref{p:mean-field-dynamics}, for $\Psi \in C([0,T]; (\sP_{1}(Y); W_{1}))$ and $\nnu \in \mathcal{M}([0,T]\times  \overline{Y} ; \R^{d})$ we define the cost functional for the mean-field control problem as
\begin{equation}
\label{e:limit-cost}
\E^{\widehat{\Psi}_{0}} (\Psi, \nnu) \coloneq \left\{ \begin{array}{ll}
\displaystyle \fint_{0}^{T} \int_{Y} \cL(y , \Psi_{t}) \, \di \Psi_{t}(y) \, \di t + \Phi_{\rm min}( \Psi, \nnu) & \text{if $(\Psi, \nnu) \in \mathcal{S}(\widehat{\Psi}_{0})$}\,,\\[2mm]
\displaystyle \vphantom{\int} +\infty & \text{otherwise}\,,
\end{array}\right.
\end{equation}
where we have set for $(\Lambda, \mmu) \in \mathcal{S}(\widehat{\Psi}_{0})$
\begin{align}
\label{e:limit-phi1}
&\Phi_{\rm min} (\Lambda, \mmu) \coloneq \min \{ \Phi(w, \Lambda): \, w \in L^{1}_{h_{\Lambda} \Lambda} ([0,T]\times Y; K), \, \mmu = w h_{\Lambda} \Lambda\} \,,\\
& \Phi(w,\Lambda) \coloneq \fint_{0}^{T} \int_{Y} \phi(w(t, y) ) \, \di \Lambda_{t}(y) \, \di t \qquad \text{for $w \in L^{1}_{h_{\Lambda} \Lambda}([0,T]\times Y; K)$}\,. \label{e:limit-phi2}
\end{align}

With the above notation at hand, the mean-field optimal control reads as
\begin{align}
\label{e:min-mean-field}
\min\,\Big\{ \E^{\widehat{\Psi}_0}(\Psi, \nnu) :  (\Psi, \nnu) \in C([0,T];(\sP_1(Y);W_1)) \times \mathcal{M}([0,T]\times  \overline{Y};\R^d) \Big\}\,.
\end{align}
In order to discuss the existence of solutions to~\eqref{e:min-mean-field}, we introduce the auxiliary functionals
\begin{align}\label{e:phi-bar}
& \overline{\Phi}(\Lambda, \mmu ) \coloneq \left\{ \begin{array}{ll}
\displaystyle \fint_{0}^{T} \overline{\phi} (  \Lambda_{t}, \mmu_{t} ) \, \di t & \text{if $\mmu \ll \Lambda$}\,,\\[2mm]
\displaystyle \vphantom{\int} + \infty & \text{otherwise}\,,
\end{array}\right. \qquad  \overline{\phi} ( \Lambda_{t},  \mmu_{t}) \coloneq \int_{\overline{Y}} \phi \bigg( \frac{\di \mmu_{t}}{\di \Lambda_{t}} (y) \bigg) \, \di \Lambda_{t}(y) 
\end{align}
for every $(  \Lambda, \mmu) \in AC([0,T]; (\sP_{1}(Y); W_{1})) \times  \mathcal{M}([0,T]\times  \overline{Y} ; \R^{d}) $. In the next two propositions we show the existence of solutions to~\eqref{e:min-mean-field}. We start by proving that for each~$(\Psi, \nnu) \in \mathcal{S}(\widehat{\Psi}_{0})$, the support of~$\Psi_{t}$ is bounded in~$Y$ uniformly.

\begin{proposition}
\label{p:bound-support}
Let $\widehat{\Psi}_{0} \in \sP_{c}(Y)$. Then, there exist $R>0$ and $L>0$ such that for every $(\Psi, \nnu) \in \mathcal{S}(\widehat{\Psi}_{0})$ the curve $t \mapsto \Psi_{t}$ is $L$-Lipschitz continuous and satisfies $\spt(\Psi_{t}) \subseteq \B^{Y}_{R}$.
\end{proposition}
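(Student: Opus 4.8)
The plan is to turn the continuity equation in the definition of $\mathcal{S}(\widehat{\Psi}_{0})$ into a transport equation driven by an effective drift of linear growth, represent its solution via the superposition principle, and then run two successive Grönwall arguments — first for the first moment, then for the pointwise norm along characteristics. Concretely, since $(\Psi,\nnu)\in\mathcal{S}(\widehat{\Psi}_{0})$ satisfies $\nnu\ll h_{\Psi}\Psi$ with $\di\nnu/\di(h_{\Psi}\Psi)\in K$, I would first absorb the control into the drift by writing $\nnu=w\,h_{\Psi}\Psi$ for a Borel map $w$ with values in the compact set $K$ and setting
\[
\tilde b_{\Psi_t}(y)\coloneq\left(\begin{array}{c} v_{\Psi_t}(y)+h_{\Psi_t}(y)w(t,y)\\[1mm] \T_{\Psi_t}(y)\end{array}\right),
\]
so that $\partial_t\Psi_t+\dive(\tilde b_{\Psi_t}\Psi_t)=0$ with $\Psi_0=\widehat{\Psi}_0$. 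Combining $(v_3)$, $(\T_1)$, $(h_1)$ and the boundedness of $K$, there is a constant $C>0$ depending only on $M_v$, $M_{\T}$, $\|h\|_\infty$ and $K$ — crucially \emph{not} on the particular solution — with
\[
\|\tilde b_{\Psi_t}(y)\|_{\overline Y}\le C\big(1+\|y\|_{\overline Y}+m_1(\Psi_t)\big)\qquad\text{for a.e. }t\text{ and all }y\in Y.
\]

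Next I would invoke the superposition principle (\cite[Theorem~5.2]{AmbForMorSav18}, \cite[Theorem~3.11]{MorSol19}) to obtain a probability measure $\eta$ concentrated on integral curves of $\tilde b$, with $\Psi_t=(\ev_t)_\#\eta$ for every $t$. Writing $m(t)\coloneq m_1(\Psi_t)=\int\|\gamma(t)\|_{\overline Y}\,\di\eta(\gamma)$ and integrating the pointwise bound $\|\gamma(t)\|_{\overline Y}\le\|\gamma(0)\|_{\overline Y}+\int_0^t\|\tilde b_{\Psi_s}(\gamma(s))\|_{\overline Y}\,\di s$ against $\eta$, the nonlocal term closes into
\[
m(t)\le m(0)+\int_0^t C\big(1+2m(s)\big)\,\di s,
\]
and Grönwall yields $m(t)\le\bar m$ for all $t$, with $\bar m$ depending only on $m_1(\widehat{\Psi}_0)$, $C$ and $T$ (note $m_1(\widehat{\Psi}_0)<\infty$ since $\widehat{\Psi}_0\in\sP_c(Y)$).

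With the first moment controlled uniformly, I would feed $\bar m$ back into the pointwise estimate: for $\eta$-a.e. $\gamma$,
\[
\|\gamma(t)\|_{\overline Y}\le\|\gamma(0)\|_{\overline Y}+C(1+\bar m)T+C\int_0^t\|\gamma(s)\|_{\overline Y}\,\di s,
\]
and a second Grönwall gives $\|\gamma(t)\|_{\overline Y}\le R$ for all $t$, where $R$ absorbs the bound $R_0\coloneq\max_{y\in\spt\widehat{\Psi}_0}\|y\|_{\overline Y}$ on the initial data. Since the exceptional $\eta$-null set is independent of $t$ and $\Psi_t\in\sP(Y)$, it follows that $\spt(\Psi_t)\subseteq\B^Y_R$ for every $t$. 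For the Lipschitz bound I would use the coupling $(\ev_s,\ev_t)_\#\eta$: for $0\le s\le t\le T$,
\[
W_1(\Psi_s,\Psi_t)\le\int\|\gamma(t)-\gamma(s)\|_{\overline Y}\,\di\eta(\gamma)\le\int_s^t\!\!\int\|\tilde b_{\Psi_\tau}(\gamma(\tau))\|_{\overline Y}\,\di\eta(\gamma)\,\di\tau\le L|t-s|,
\]
with $L\coloneq C(1+R+\bar m)$, thanks to the uniform support bound just obtained.

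The main obstacle is the self-referential nature of the drift: $\tilde b_{\Psi_t}$ depends on the unknown $\Psi_t$ both through the nonlocal velocity/transition fields and through $m_1(\Psi_t)$, so one cannot bound the characteristics before controlling the moment, nor conversely. The two-stage Grönwall — moment first, then pointwise norm — is precisely what decouples these. A secondary technical point is the legitimacy of the superposition principle, since $w$ is only integrable in time; this is handled by observing that the genuinely $t$-dependent part of the drift, $h_{\Psi_t}w$, is in fact \emph{bounded} (as $h$ is bounded and $K$ is compact), so $\tilde b$ has exactly the linear-growth structure required by the cited representation results.
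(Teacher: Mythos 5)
Your proposal is correct and follows essentially the same route as the paper: write $\nnu=w\,h_{\Psi}\Psi$ with $w$ valued in $K$, observe that the resulting drift has linear growth with constants independent of the solution, apply the superposition principle, and run a two-stage Gr\"onwall argument (first moment, then pointwise norm along characteristics), finishing with the Lipschitz bound from the boundedness of the drift on the support. The one difference is ordering: the paper first derives the moment bound directly from the weak formulation of the continuity equation with truncated test functions $\zeta g_{\varepsilon}\theta_{n}$ and only then invokes superposition (using the moment bound to verify its integrability hypothesis), whereas you invoke superposition first --- which is legitimate, but you should note explicitly that the hypothesis $\int_{0}^{T}\int\|\tilde b_{\Psi_t}(y)\|_{\overline Y}\,\di\Psi_t(y)\,\di t<+\infty$ already holds (non-uniformly) because $\Psi\in AC([0,T];(\sP_{1}(Y);W_{1}))$ forces $t\mapsto m_{1}(\Psi_{t})$ to be continuous, hence bounded.
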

\begin{proof}
Let $(\Psi, \nnu)$ be as in the statement of the proposition. In particular, we may write $\nnu = w h_{\Psi} \Psi$ for $w \in L^{1}_{ h_{\Psi} \Psi} ([0, T]; K)$ such that
\begin{displaymath}
\Phi_{\rm min} (\Psi, \nnu) = \fint_{0}^{T} \int_{Y} \phi(w (t, y) ) \, \di \Psi_{t}(y) \, \di t\,.
\end{displaymath} 
Since $\phi(0) = 0$ and $\phi \geq0$, without loss of generality we may suppose $w (t, y) = 0$ in $\{ (t, y) \in [0, T] \times Y: \,  h_{\Psi} (t, y) = 0\}$.

Let us first give a bound on the first moment~$m_{1}(\Psi_{t})$. To do this, we fix a function $\zeta \in C_{c}(\mathcal{F}(U))$ such that $0 \leq \zeta\leq 1$ and $\zeta(\lambda) = 1 $ for $\lambda \in \sP(U)$, which is possible since $\sP(U)$ is a compact subset of~$\mathcal{F}(U)$. For every $n \in \mathbb{N}$ and every $\varepsilon>0$, let us fix $g_{\varepsilon}(x) \coloneqq \sqrt{|x|^{2} + \varepsilon^{2}}$ and $\theta_{n}(x) \coloneqq \theta(\frac{x}{n})$, where $\theta \in C_{c} (\R^{d})$ is such that $0 \leq \theta \leq 1$, $|\nabla_{x} \theta| \leq 1$ in~$\R^{d}$, $\theta (x) = 1 $ for $|x| \leq 1$, and $\theta(x) = 0$ for $|x| \geq2$. Then, the function $\zeta g_{\varepsilon} \theta_{n} \in C_{c}(\overline{Y})$ and
\begin{equation}\label{e:first-moment}
\begin{split}
\int_{\overline{Y}} & \ g_{\varepsilon}(x) \theta_{n}(x) \, \di \Psi_{t} (y)   - \int_{\overline{Y}} g_{\varepsilon}(x) \theta_{n}(x) \, \di \widehat{\Psi}_{0} (y) 
\\
&
= \ \int_{0}^{t} \int_{\overline{Y}} \nabla_{x} (g_{\varepsilon}(x) \theta_{n}(x)) \cdot b_{\Psi_{\tau}} (y) \, \di \Psi_{\tau}(y) \, \di \tau 
\\
&
\quad + \int_{0}^{t} \int_{\overline{Y}} h_{\Psi_{\tau}}(y) \nabla_{x} (g_{\varepsilon}(x) \theta_{n}(x)) \cdot w (\tau, y) \, \di \Psi_{\tau}(y) \, \di \tau\,.
\end{split}
\end{equation}
Since $|\nabla_{x} \theta_{n}| \leq \frac{1}{n}$,  $g_{\varepsilon}(x) \leq |x| + \varepsilon$, and $(h_{1})$--$(h_{2})$ hold, we continue in~\eqref{e:first-moment} with
\begin{equation}\label{e:first-moment-2}
\begin{split}
\int_{\overline{Y}}& \ g_{\varepsilon}(x) \theta_{n}(x) \, \di \Psi_{t} (y)  - \int_{\overline{Y}} g_{\varepsilon}(x) \theta_{n}(x) \, \di \widehat{\Psi}_{0} (y) 
\\
&
\leq \Big(  2 + \frac{\varepsilon}{n} \Big) \int_{0}^{t} \int_{\overline{Y}} \| b_{\Psi_{\tau}} (y) \|_{\overline{Y}} \, \di \Psi_{\tau}(y) \, \di t + C T \Big(  2 + \frac{\varepsilon}{n} \Big) \,,
\end{split}
\end{equation}
for a positive constant~$C$ dependent only on~$h$ and on~$K$. Passing to the limit, in the order, as $\varepsilon \to 0$ and $n \to \infty$, and using~$(v_{1})$ and~$(\T_{1})$, we deduce from~\eqref{e:first-moment-2} that
\begin{equation}\label{e:first-moment-3}
m_{1}(\Psi_{t}) \leq m_{1}(\widehat{\Psi}_{0}) +  4( M_{v} + M_{\T}) \int_{0}^{t} (1 + m_{1}(\Psi_{\tau})) \,\di \tau + CT\,.
\end{equation}
Since $\Psi_{t} \in \sP(Y)$ for every $t \in [0, T]$, applying Gr\"onwall inequality to~\eqref{e:first-moment-3} we infer that 
\begin{equation}
\label{e:moment-bdd-12}
\sup_{t \in [0, T]} \,  m_{1}(\Psi_{t}) \leq \big(  m_{1}(\widehat{\Psi}_{0}) + \overline{C} T \big) e^{4T (M_{v} + M_{\T})}  \,,
\end{equation}
for some positive constant~$\overline{C}$ only depending on~$h$ and on~$K$.

We now prove the uniform bound of the support of~$\Psi_{t}$. To do this, we will apply the superposition principle~\cite[Theorem~5.2]{AmbForMorSav18}. The curve~$\Psi \in AC([0,T]; (\sP_{1}(Y); W_{1}))$ solves the continuity equation
\begin{align}
\label{e:conteq-2}
\partial_{t} \Psi_{t} + \dive ( b (t, \cdot) \Psi_{t}) = 0 \qquad \text{with $\Psi_{0}= \widehat{\Psi}_{0}$}\,,
\end{align}
where the velocity field~$b \colon [0,T]\times \overline{Y} \to \overline{Y}$ is defined as 
\begin{align}\label{e:flux-2}
b (t, y) \coloneq b_{\Psi_{t}}(y) + \left( \begin{array}{cc}
h_{\Psi_{t}} (y) w (t, y) \\
0
\end{array} \right) \qquad \text{for $y \in Y$}
\end{align} 
and is extended to $0$ in~$\overline{Y} \setminus Y$. By~\eqref{e:moment-bdd-12},~$(v_{1})$,~$(\T_{1})$, and~$(h_{1})$, and by the fact that $w (t, y) \in K$, we can estimate
\begin{equation}
\label{e:limsup1-2}
\begin{split}
\int_{0}^{T} \int_{\overline{Y}} \,\| b (t, y)\|_{\overline{Y}} \, \di \Psi_{t}(y) \, \di t  \leq & \ (M_{v} + M_{\T}) \int_{0}^{T} \int_{\overline{Y}} (1 + \| y \|_{\overline{Y}} + m_{1}(\Psi_{t}) ) \, \di \Psi_{t}(y) \, \di t 
\\
&
 + \int_{0}^{T} \int_{\overline{Y}} |h_{\Psi_{t}} (y)w (t, y)  |\, \di \Psi_{t}(y) \, \di t < +\infty\,. 
\end{split}
\end{equation}
We are therefore in a position to apply~\cite[Theorem~5.2]{AmbForMorSav18} with velocity field~$b$. Hence, there exists $\pi \in \sP(C([0, T]; \overline{Y}))$ such that 
\begin{equation}\label{e:superpos}
\Psi_{t} = (\mathrm{ev}_{t})_{\#} \pi \qquad \text{for every $t \in [0, T]$},
\end{equation}
where $\mathrm{ev}_{t} (y) \coloneqq y(t)$ for every $y \in  C([0, T]; \overline{Y})$ and every $t \in [0, T]$. Moreover, $\pi$ is concentrated on solutions of the Cauchy problems
\begin{equation}\label{e:another-Cauchy}
\left\{
\begin{array}{ll}
\dot{y}(t) = b (t, y(t)) \,,\\
y(0) = y_{0} \in \spt( \widehat{\Psi}_{0}) \,.
\end{array}
\right.
\end{equation}
For every $y \in C([0, T]; \overline{Y})$ solution of~\eqref{e:another-Cauchy}, for $t \in [0, T]$ we have, by $(v_{2})$,~$(\T_{2})$, and~$(h_{2})$, that
\begin{equation}\label{e:sm}
\| y(t) \|_{\overline{Y}} \leq \| y_{0}\|_{\overline{Y}} + (M_{v} + M_{\T})  \int_{0}^{t} (1 + \| y(\tau) \|_{\overline{Y}} + m_{1}(\Psi_{\tau})) \, \di \tau + CT\,,  
\end{equation}
where $C$ is as in~\eqref{e:first-moment-2}. Again by Gr\"onwall inequality, since $y_{0} \in \spt(\widehat{\Psi}_{0}) $ and~\eqref{e:moment-bdd-12} holds, we deduce from~\eqref{e:sm} that there exists $R>0$ independent of~$t$ such that every solution~$t \mapsto y(t)$ of the Cauchy problem~\eqref{e:another-Cauchy} takes values in~$\B^{Y}_{R}$, so that $\spt(\Psi_{t}) \subseteq \B^{Y}_{R}$ by~\eqref{e:superpos}. This implies, together with~$(v_{1})$,~$(\T_{1})$, and $(h_{1})$, that
\begin{displaymath}
\| b(t, y) \|_{\overline{Y}} \leq (M_{v} + M_{\T}) (1 + 2R) + C
\end{displaymath}
for every $t \in [0, T]$ and every $y \in \spt{\Psi}_{t}$. Since $\Psi$ solves~\eqref{e:conteq-2}, we deduce that $t \mapsto \Psi_{t}$ is Lipschitz continuous, with Lipschitz constant~$L$ only depending on~$R$. In particular, all the above computations are independent of the choice of~$(\Psi, \nnu) \in \mathcal{S}(\widehat{\Psi}_{0})$. This concludes the proof of the proposition.
\end{proof}

\begin{proposition}
For every $\widehat{\Psi}_{0} \in \sP_{c}(Y)$ the minimum problem~\eqref{e:min-mean-field} admits a solution.
\end{proposition}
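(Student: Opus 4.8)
The plan is to apply the direct method of the calculus of variations, exploiting the compactness and closedness of $\mathcal{S}(\widehat{\Psi}_{0})$ already obtained in Proposition~\ref{p:mean-field-dynamics}, so that the only genuine task left is the lower semicontinuity of $\E^{\widehat{\Psi}_{0}}$. First I would check that the infimum is finite: taking the null control $w\equiv 0$ (hence $\nnu=0$), the uncontrolled continuity equation $\partial_{t}\Psi_{t}+\dive(b_{\Psi_{t}}\Psi_{t})=0$ with $\Psi_{0}=\widehat{\Psi}_{0}$ admits a solution by the well-posedness theory of \cite{MorSol19,AmbForMorSav18}, so $\mathcal{S}(\widehat{\Psi}_{0})\neq\emptyset$; since $\phi(0)=0$ and, by Proposition~\ref{p:bound-support} together with $(\cL_{3})$, the Lagrangian $\cL$ is bounded on the compact set $\B^{Y}_{R}\times\sP(\B^{Y}_{R})$ containing all admissible $(\,\cdot\,,\Psi_{t})$, that pair has finite cost. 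I then take a minimizing sequence $(\Psi^{n},\nnu^{n})\in\mathcal{S}(\widehat{\Psi}_{0})$; its energies are bounded, so Proposition~\ref{p:mean-field-dynamics} yields, up to a subsequence, $\Psi^{n}\to\Psi$ uniformly in $C([0,T];(\sP_{1}(Y);W_{1}))$, $\nnu^{n}\rightharpoonup^{*}\nnu$, and the crucial closedness $(\Psi,\nnu)\in\mathcal{S}(\widehat{\Psi}_{0})$. It thus remains to prove $\E^{\widehat{\Psi}_{0}}(\Psi,\nnu)\le\liminf_{n}\E^{\widehat{\Psi}_{0}}(\Psi^{n},\nnu^{n})$.

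The Lagrangian term is the easy half. By Proposition~\ref{p:bound-support} all the $\Psi^{n}_{t}$ and $\Psi_{t}$ are supported in the fixed ball $\B^{Y}_{R}$, on whose compact trace $\B^{Y}_{R}\times\sP(\B^{Y}_{R})$ the function $\cL$ is uniformly continuous by $(\cL_{3})$. Combined with $\sup_{t}W_{1}(\Psi^{n}_{t},\Psi_{t})\to 0$, this gives uniform-in-$t$ convergence of $y\mapsto\cL(y,\Psi^{n}_{t})$ and hence of $\int_{Y}\cL(y,\Psi^{n}_{t})\,\di\Psi^{n}_{t}(y)$ to $\int_{Y}\cL(y,\Psi_{t})\,\di\Psi_{t}(y)$, so the first integral in $\E^{\widehat{\Psi}_{0}}$ passes to the limit with equality.

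The core of the argument is the lower semicontinuity of $\Phi_{\rm min}$, and this is exactly where the auxiliary functional $\overline{\Phi}$ enters. I would write $\nnu^{n}=w^{n}h_{\Psi^{n}}\Psi^{n}$ with $w^{n}\in K$, $w^{n}=0$ on $\{h_{\Psi^{n}}=0\}$, realizing $\Phi_{\rm min}(\Psi^{n},\nnu^{n})=\fint_{0}^{T}\!\int_{Y}\phi(w^{n})\,\di\Psi^{n}_{t}\,\di t$. Introducing the auxiliary measures $\widetilde{\nnu}^{n}\coloneq w^{n}\Psi^{n}$ — so that $\nnu^{n}=h_{\Psi^{n}}\widetilde{\nnu}^{n}$ and $\Phi_{\rm min}(\Psi^{n},\nnu^{n})=\overline{\Phi}(\Psi^{n},\widetilde{\nnu}^{n})$ — and noting $|\widetilde{\nnu}^{n}|\le C\Psi^{n}$ because $K$ is bounded, a weak-$*$ limit $\widetilde{\nnu}^{n}\rightharpoonup^{*}\widetilde{\nnu}$ exists on the compact set $[0,T]\times\B^{Y}_{R}$. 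The decisive point, made possible by the uniform bound on the supports, is that $h_{\Psi^{n}}\to h_{\Psi}$ uniformly on $[0,T]\times\B^{Y}_{R}$ (apply $(h_{2})$ with $y_{1}=y_{2}$ and $\sup_{t}W_{1}(\Psi^{n}_{t},\Psi_{t})\to 0$); testing $\nnu^{n}=h_{\Psi^{n}}\widetilde{\nnu}^{n}$ against $\varphi\in C_{b}$ and using that $\varphi\,h_{\Psi^{n}}\to\varphi\,h_{\Psi}$ uniformly while $\widetilde{\nnu}^{n}$ is equibounded identifies the limit as $\nnu=h_{\Psi}\widetilde{\nnu}$. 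From $|\widetilde{\nnu}|\le C\Psi$ I get $\widetilde{\nnu}\ll\Psi$ with density $\widetilde w$, and $\widetilde w\in K$ a.e.\ follows by intersecting countably many closed half-spaces containing the convex set $K$ and passing each linear constraint $\langle a,w^{n}\rangle\le c$ to the limit in its measure form $\langle a,\widetilde{\nnu}\rangle\le c\,\Psi$.

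Finally, classical lower semicontinuity for integral functionals of measures with convex, superlinear integrand ($(\phi_{1})$; see, e.g., \cite{AmbGigSav08}) gives $\overline{\Phi}(\Psi,\widetilde{\nnu})\le\liminf_{n}\overline{\Phi}(\Psi^{n},\widetilde{\nnu}^{n})$. Since $\nnu=\widetilde w\,h_{\Psi}\Psi$ with $\widetilde w\in K$, the function $\widetilde w$ is admissible in the minimization defining $\Phi_{\rm min}(\Psi,\nnu)$, whence
\[
\Phi_{\rm min}(\Psi,\nnu)\le\Phi(\widetilde w,\Psi)=\overline{\Phi}(\Psi,\widetilde{\nnu})\le\liminf_{n}\overline{\Phi}(\Psi^{n},\widetilde{\nnu}^{n})=\liminf_{n}\Phi_{\rm min}(\Psi^{n},\nnu^{n}).
\]
Combined with the convergence of the Lagrangian term, this yields $\E^{\widehat{\Psi}_{0}}(\Psi,\nnu)\le\liminf_{n}\E^{\widehat{\Psi}_{0}}(\Psi^{n},\nnu^{n})$, so $(\Psi,\nnu)$ is a minimizer. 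I expect the main obstacle to be precisely the third paragraph: keeping track of the $n$-dependent activation factor $h_{\Psi^{n}}$ inside the cost, for which the uniform control of the supports from Proposition~\ref{p:bound-support} (yielding $h_{\Psi^{n}}\to h_{\Psi}$ uniformly) and the passage through the auxiliary measures $\widetilde{\nnu}^{n}$ are the enabling devices.
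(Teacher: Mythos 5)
Your argument is essentially the paper's own proof: a minimizing sequence is compactified via the uniform support bound of Proposition~\ref{p:bound-support}, the auxiliary measures $w^{n}\Psi^{n}$ are introduced precisely so that $\Phi_{\rm min}(\Psi^{n},\nnu^{n})=\overline{\Phi}(\Psi^{n},w^{n}\Psi^{n})$, the limit control measure is identified as $\nnu=h_{\Psi}\widetilde{\nnu}$ through the uniform convergence of $h_{\Psi^{n}}$ on the common compact support, and the lower semicontinuity of $\overline{\Phi}$ (the paper invokes \cite[Lemma~9.4.3]{AmbGigSav08} here, which also delivers $\widetilde{\nnu}\ll\Psi$) together with the $K$-valuedness of the limiting density yields $\Phi_{\rm min}(\Psi,\nnu)\le\liminf_{n}\Phi_{\rm min}(\Psi^{n},\nnu^{n})$, while the Lagrangian term is handled exactly as you do via $(\cL_{3})$. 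The one point to repair is your appeal to Proposition~\ref{p:mean-field-dynamics} for the closedness $(\Psi,\nnu)\in\mathcal{S}(\widehat{\Psi}_{0})$: that proposition is stated for pairs generated by $N$-particle trajectories, not for an arbitrary minimizing sequence in $\mathcal{S}(\widehat{\Psi}_{0})$, so the passage to the limit in the continuity equation must be re-run for your sequence (it goes through verbatim thanks to the uniform support bound and $(v_{2})$, $(\T_{2})$, $(h_{2})$); apart from this attribution, your verification of the constraints $\nnu\ll h_{\Psi}\Psi$ with density in $K$ is exactly the part the paper writes out, and your preliminary check that the infimum is finite is a sensible addition the paper omits.
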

\begin{proof}
The proof of existence follows from the Direct Method. Let $(\Psi_{k}, \nnu_{k}) \in \mathcal{S}(\widehat{\Psi}_{0})$ be a minimizing sequence for~\eqref{e:min-mean-field}. For every $k$, we may write $\nnu_{k} = w_{k} h_{\Psi_{k}} \Psi_{k}$ for $w_{k} \in L^{1}_{ h_{\Psi_{k}} \Psi_{k}} ([0, T]; K)$ such that
\begin{displaymath}
\Phi_{\rm min} (\Psi_{k}, \nnu_{k}) = \fint_{0}^{T} \int_{Y} \phi(w_{k}(t, y) ) \, \di \Psi_{k, t} (y) \, \di t\,.
\end{displaymath} 
Without loss of generality we may suppose $w_{k}(t, y) = 0$ in $\{ (t, y) \in [0, T] \times Y: \,  h_{\Psi_{k}} (t, y) = 0\}$.

By Proposition~\ref{p:bound-support}, $\Psi_{k, t}$ have a uniformly bounded support in~$Y$ and is equi-Lipschitz continuous. By Ascoli-Arzel\`a theorem, there exists $\Psi \in AC([0, T]; (\sP_{1}(Y); W_{1}))$ such that, up to a subsequence,~$\Psi_{k}$ converges to~$\Psi$ uniformly in $C([0, T]; (\sP_{1}(Y); W_{1}))$.

Since $\nnu_{k} = w_{k} h_{\Psi_{k}}\Psi_{k}$, we have that, up to a subsequence, $\nnu_{k} \rightharpoonup \nnu$ weakly$^*$ in~$\mathcal{M}([0, T]\times \overline{Y}; \R^{d})$. Let us define the auxiliary measure $\mmu_{k} \coloneqq w_{k} \Psi_{k} \in \mathcal{M}([0, T]\times \overline{Y}; \R^{d})$. In particular, we may assume that $\mmu_{k} \rightharpoonup \mmu$ weakly$^{*}$ in~$\mathcal{M}([0, T] \times \overline{Y} ; \R^{d})$. Thus, thanks to~$(h_{1})$, to the uniform convergence of~$\Psi_{k}$ to~$\Psi$, and to the fact that $\spt(\Psi_{k, t}) \subseteq \B^{Y}_{R}$, we also have that $\nnu = h_{\Psi} \mmu$. By definition of~$\Phi_{min}$ and of~$\overline{\Phi}$ (see~\eqref{e:limit-phi1}--\eqref{e:limit-phi2} and~\eqref{e:phi-bar}), we have that for every~$k$
\begin{displaymath}
\Phi_{min} (\Psi_{k}, \nnu_{k}) = \Phi(w_{k}, \Psi_{k}) = \overline{\Phi} (\Psi_{k}, \mmu_{k})\,,
\end{displaymath}
so that
\begin{displaymath}
\sup_{k} \,  \overline{\Phi} (\Psi_{k}, \mmu_{k})<+\infty\,.
\end{displaymath}
Applying~~\cite[Lemma~9.4.3]{AmbGigSav08} we infer that $\mmu \ll \Psi$ and
\begin{equation}\label{e:sm-liminf}
 \overline{\Phi} (\Psi, \mmu) \leq \liminf_{k\to\infty}\,  \overline{\Phi} (\Psi_{k}, \mmu_{k})\,.
\end{equation}
Since $\nnu = h_{\Psi} \mmu$, we also have that $\nnu \ll h_{\Psi} \Psi$. Moreover, since $K$ in convex and compact with~$0 \in K$, we have that $w \coloneqq \frac{\di \mmu}{\di \Psi} \in K$ for $\Psi$-a.e.~$(t, y) \in [0, T] \times \overline{Y}$ and
\begin{displaymath}
\frac{\di \nnu}{\di (h_{\Psi}\Psi)}(t, y) =  \frac{\di (h_{\Psi}\mmu)}{\di (h_{\Psi}\Psi)} (t, y)= w(t, y) \in K \qquad \text{for $h_{\Psi}\Psi$-a.e.~$(t, y) \in [0,T] \times  \overline{Y}$}.
\end{displaymath}
Thus, $ (\Psi, \nnu) \in \mathcal{S}(\widehat{\Psi}_{0})$ and, by~\eqref{e:sm-liminf},
\begin{equation}\label{e:sm-liminf-2}
\Phi_{min} (\Psi, \nnu) \leq \Phi(w, \Psi) = \overline{\Phi} (\Psi, \mmu) \leq  \liminf_{k\to\infty}\,  \overline{\Phi} (\Psi_{k}, \mmu_{k}) =  \liminf_{k\to\infty}\, \Phi_{min} (\Psi_{k}, \nnu_{k})\,.
\end{equation}
Finally, by $(\mathcal{L}_{3})$, by the uniform convergence of~$\Psi_{k}$ to~$\Psi$, and by the uniform inclusion~$\spt(\Psi_{k, t}) \subseteq \B^{Y}_{R}$, we get that
\begin{equation}\label{e:sm-liminf-3}
\fint_{0}^{T} \int_{Y} \mathcal{L} (y, \Psi_{t}) \, \di \Psi_{t} (y) \, \di t = \lim_{k \to \infty} \fint_{0}^{T} \int_{Y} \mathcal{L} (y, \Psi_{k, t}) \, \di \Psi_{k, t} (y) \, \di t\,. 
\end{equation} 
Combining~\eqref{e:sm-liminf-2} and~\eqref{e:sm-liminf-3} we infer that
\begin{displaymath}
\E^{\widehat{\Psi}_{0}} (\Psi, \nnu) \leq \liminf_{k \to \infty} \, \E^{\widehat{\Psi}_{0}} (\Psi_{k}, \nnu_{k})\,, 
\end{displaymath}
which concludes the proof of the proposition.
\end{proof}

We are now in a position to state our main $\Gamma$-convergence result.

\begin{theorem}
\label{t:Gamma-convergence}
Let $\widehat{\Psi}_{0} \in \sP_{c}(Y)$. Then the following facts hold:

\noindent \emph{\textbf{($\Gamma$-liminf inequality)}} for every sequence $(\y^{N}, \uu^{N}) \in AC([0,T];  Y^{N}) \times L^{1}([0,T]; K^{N})$ and $\y^{N}_{0} \in Y^{N}$, let $(\Psi^{N}, \nnu^{N})\in AC([0,T]; (\sP_{1}(Y); W_{1})) \times \mathcal{M}([0,T]\times  \overline{Y}; \R^{d})$ be the pair generated by $(\y^{N}, \uu^{N})$ and let $\Psi_{0}^{N}$ be the measure generated by $\y_{0}^{N}$.
Assume that $\Psi^{N}$ converges to $\Psi$ in $C([0,T]; (\sP_{1}(Y); W_{1}))$, that $\nnu^{N}$ converges weakly$^{*}$ to $\nnu$ in~$ \mathcal{M}([0,T]\times  \overline{Y}; \R^{d})$, and that $W_1(\Psi^{N}_{0},\widehat\Psi_{0})\to0$  as $N\to\infty$.
Then
\begin{equation}\label{gamma_liminf}
\E^{\widehat{\Psi}_{0}} (\Psi, \nnu) \leq \liminf_{N\to \infty} \, \E_{N}^{\y^{N}_{0}} ( \y^{N}, \uu^{N})\,.
\end{equation}

\noindent \emph{\textbf{($\Gamma$-limsup inequality)}}
for every  $(\Psi, \nnu) \in \mathcal{S}(\widehat{\Psi}_{0})$ and every sequence of initial data $\y_0^N \in Y^{N}$ such that the generated measures~$\Psi_0^N$ satisfy $W_1(\Psi_0^N,\widehat\Psi_0)\to0$, there exists a sequence $(\y^{N}, \uu^{N}) \in \mathcal{S}(\y^{N}_{0})$ with generated pairs $(\Psi^{N},\nnu^{N})\in AC([0,T]; (\sP_{1}(Y); W_{1})) \times \mathcal{M}([0,T]\times  \overline{Y} ; \R^{d})$ such that $\Psi^{N} \to \Psi$ in $C([0,T]; (\sP_{1}(Y); W_{1}))$, $\nnu^{N} \rightharpoonup \nnu$ weakly$^{*}$ in $\mathcal{M}([0,T] \times  \overline{Y}; \R^{d})$, as $N\to\infty$, and
\begin{equation}\label{gamma_limsup}
\E^{\widehat{\Psi}_{0}} ( \Psi, \nnu) \geq  \limsup_{N \to \infty} \, \E_{N}^{\y^{N}_{0}} ( \y^{N}, \uu^{N}) \,.
\end{equation} 
 \end{theorem}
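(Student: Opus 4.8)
The plan is to prove the two inequalities separately, recycling the compactness and lower-semicontinuity tools from Propositions~\ref{p:mean-field-dynamics} and~\ref{p:bound-support}. For the $\Gamma$-liminf inequality I would first assume the right-hand side of~\eqref{gamma_liminf} is finite and pass to a (non-relabelled) subsequence realizing the liminf, along which $\sup_N \E_N^{\y^{N}_0}(\y^{N},\uu^{N})<+\infty$; in particular $(\y^{N},\uu^{N})\in\mathcal{S}(\y^{N}_0)$ for large $N$. Proposition~\ref{p:mean-field-dynamics} then guarantees that the (assumed) limit pair $(\Psi,\nnu)$ lies in $\mathcal{S}(\widehat{\Psi}_0)$, so that the finite branch of $\E^{\widehat{\Psi}_0}$ is the relevant one. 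The Lagrangian term is handled through $(\cL_2)$ together with the uniform support bound of Proposition~\ref{p:bound-support} applied to $\Psi^{N}$: since $\cL_{N}(\cdot,\Psi^{N}_t)\to\cL(\cdot,\Psi_t)$ uniformly on the common compact support while $\Psi^{N}_t\rightharpoonup\Psi_t$, the term $\tfrac1N\sum_i\fint_0^T\cL_N(y_i(t),\Psi^{N}_t)\,\di t=\fint_0^T\int_Y\cL_N(y,\Psi^{N}_t)\,\di\Psi^{N}_t\,\di t$ converges to $\fint_0^T\int_Y\cL(y,\Psi_t)\,\di\Psi_t\,\di t$.

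For the control term I would introduce the auxiliary measures $\mmu^{N}=\mmu^{N}_t\otimes\cL^1\res[0,T]$ with $\mmu^{N}_t\coloneq\tfrac1N\sum_i u_i(t)\delta_{y_i(t)}$, so that $\nnu^{N}=h_{\Psi^{N}}\mmu^{N}$ and, by convexity of $\phi$ (with equality when the particles are distinct), $\tfrac1N\sum_i\fint_0^T\phi(u_i(t))\,\di t\geq\overline{\Phi}(\Psi^{N},\mmu^{N})$. Because $u_i\in K$ compact, the $\mmu^{N}$ are uniformly bounded in total variation, hence $\mmu^{N}\rightharpoonup\mmu$ weakly$^*$ up to a subsequence; arguing exactly as in the proof of existence of minimizers for~\eqref{e:min-mean-field} (using $(h_1)$--$(h_2)$, the uniform convergence of $\Psi^{N}$, and the common support bound) one identifies $\nnu=h_{\Psi}\mmu$, whence $w\coloneq\di\mmu/\di\Psi\in K$ and $\nnu=w\,h_{\Psi}\Psi$. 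The convexity and superlinear growth of $\phi$ then enter through \cite[Lemma~9.4.3]{AmbGigSav08}, giving $\overline{\Phi}(\Psi,\mmu)\leq\liminf_N\overline{\Phi}(\Psi^{N},\mmu^{N})$; combined with $\Phi_{\rm min}(\Psi,\nnu)\leq\Phi(w,\Psi)=\overline{\Phi}(\Psi,\mmu)$ this closes~\eqref{gamma_liminf}.

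For the $\Gamma$-limsup inequality I would follow the superposition strategy of~\cite{Flos}. Given $(\Psi,\nnu)\in\mathcal{S}(\widehat{\Psi}_0)$, write $\nnu=w\,h_{\Psi}\Psi$ with $w$ realizing $\Phi_{\rm min}$ and $w=0$ on $\{h_{\Psi}=0\}$, and recall from the proof of Proposition~\ref{p:bound-support} that $\Psi$ solves the continuity equation~\eqref{e:conteq-2} with the integrable field $b$ of~\eqref{e:flux-2}. The superposition principle~\cite[Theorem~5.2]{AmbForMorSav18} yields $\pi\in\sP(C([0,T];\overline{Y}))$ concentrated on solutions of~\eqref{e:another-Cauchy} with $\Psi_t=(\ev_t)_\#\pi$. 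I would then select, by a careful discretization of $\pi$ as in~\cite{Flos}, trajectories $\z^{N}=(z^{N}_1,\dots,z^{N}_N)$ whose empirical measures $\Lambda^{N}_t=\tfrac1N\sum_i\delta_{z^{N}_i(t)}$ converge to $\Psi_t$ and along which both the Lagrangian and the path-cost $\tfrac1N\sum_i\fint_0^T\phi(w(t,z^{N}_i(t)))\,\di t$ converge to the corresponding integrals against $\Psi$; for the latter one uses Fubini and $\Psi_t=(\ev_t)_\#\pi$, together with the fact that $\phi\circ w$ need only be integrated, not continuous.

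The delicate point is matching the prescribed initial data. Since $\Lambda^{N}_0$ and the given $\Psi^{N}_0$ are both close to $\widehat{\Psi}_0$ and are empirical measures of $N$ atoms, I would invoke the symmetry of the cost (Definition~\ref{d:sym} and Remark~\ref{r:sym}) to relabel the $z^{N}_i$ so that $\tfrac1N\sum_i\|z^{N}_i(0)-y_{0,i}\|_{\overline Y}$ is comparable to $W_1(\Lambda^{N}_0,\Psi^{N}_0)\to0$. I would then fix the controls $u^{N}_i(t)\coloneq w(t,z^{N}_i(t))\in K$ as time functions and define $\y^{N}$ as the solution of the well-posed Cauchy problem~\eqref{e:some-intro-2} (equivalently~\eqref{e:Cauchy}) with these controls and initial datum $\y^{N}_0$; it is essential to prescribe the controls along $\z^{N}$ rather than to solve the implicit system~\eqref{e:some-intro}, which is ill-posed for merely integrable $w$. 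A Gr\"onwall estimate based on $(v_2)$, $(\T_2)$, $(h_1)$--$(h_2)$ and the boundedness of $u^{N}_i$ then bounds $\tfrac1N\sum_i\|y^{N}_i(t)-z^{N}_i(t)\|_{\overline Y}$ by the initial mismatch plus $\int_0^t W_1(\Psi^{N}_\tau,\Lambda^{N}_\tau)\,\di\tau$, yielding $W_1(\Psi^{N}_t,\Lambda^{N}_t)\to0$ uniformly and hence $\Psi^{N}\to\Psi$; the closeness of $\y^{N}$ to $\z^{N}$, the continuity of $h$, and the same discretization of $\pi$ give $\nnu^{N}\rightharpoonup\nnu$. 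I expect this step to be the main obstacle: because $h$ may vanish one cannot normalize by $h$ nor exploit a linear dependence on the control, so the proximity of the two dynamics must be extracted solely from the Lipschitz bounds on $v$, $\T$, and $h$. Once $\Psi^{N}\to\Psi$ and $\nnu^{N}\rightharpoonup\nnu$ are established, $(\cL_2)$--$(\cL_3)$ give convergence of the Lagrangian term and the construction gives convergence of the $\phi$-term, so that $\limsup_N\E_N^{\y^{N}_0}(\y^{N},\uu^{N})\leq\E^{\widehat{\Psi}_0}(\Psi,\nnu)$, which is~\eqref{gamma_limsup}.
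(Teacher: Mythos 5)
Your proposal follows essentially the same route as the paper's proof: the $\Gamma$-liminf via the compactness of Proposition~\ref{p:mean-field-dynamics}, the uniform support bound, the $\sP_1$-convergence of $\cL_N$, and the auxiliary measures $\mmu^N$ with \cite[Lemma~9.4.3]{AmbGigSav08} (this is precisely Lemma~\ref{l:AGS}); the $\Gamma$-limsup via the superposition principle, empirical discretization of $\pi$, prescription of the controls along the reference curves $\z^N$, a permutation of the initial data realizing $W_1(\Psi^N_0,\Lambda^N_0)$, and a Gr\"onwall comparison of the two dynamics. The only step you leave at the level of a citation --- that the discretization of $\pi$ can be chosen so that the path-cost $\int_\Gamma \F\,\di\pi_N$ and the $\nnu$-type integrals converge, which the paper obtains by first proving lower semicontinuity of $\F$ on the solution set and then applying Lusin's theorem --- is handled in the paper exactly as in the reference you invoke, so the approaches coincide.
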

 
As a corollary of  Theorem~\ref{t:Gamma-convergence}, we obtain the convergence of minima and minimizers.
 
 \begin{corollary}
 \label{c:minima}
Let $\widehat{\Psi}_{0} \in \sP_{c}(Y)$ and let $\y^{N}_{0} \in Y^{N}$ a fixed sequence of initial data with generated measure~$\Psi^{N}_{0} \in \sP^{N}(Y)$ satisfying~$W_{1}(\Psi^{N}_{0}, \widehat{\Psi}) \to 0$ as~$N \to\infty$. Then for every sequence $( \y^{N} , \uu^{N} ) \in \mathcal{S} (\y^{N}_{0})$ of solutions to~\eqref{e:min-N-particles} with generated pairs~$(\Psi^{N}, \nnu^{N})$, there exists $(\Psi, \nnu) \in \mathcal{S}(\widehat{\Psi}_{0})$ solution to~\eqref{e:min-mean-field} such that, up to a subsequence, $\Psi^{N} \to \Psi$ in $C([0, T]; (\sP_{1}(Y); W_{1}))$, $\nnu^{N} \rightharpoonup \nnu$ weakly$^{*}$ in $\mathcal{M}([0, T]\times \overline{Y}; \R^{d})$, and
\begin{displaymath}
\E^{\widehat{\Psi}_{0}} (\Psi, \nnu) = \lim_{N \to \infty} \E_{N}^{\y^{N}_{0}} (\y^{N}, \uu^{N})\,.
\end{displaymath}
\end{corollary}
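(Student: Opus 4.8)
The plan is to run the standard ``fundamental theorem of $\Gamma$-convergence'' argument, combining the two inequalities of Theorem~\ref{t:Gamma-convergence} with the compactness of Proposition~\ref{p:mean-field-dynamics} and with the existence of a minimizer for the limit problem established just before the theorem.

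First I would fix a minimizer $(\Psi^{*}, \nnu^{*}) \in \mathcal{S}(\widehat{\Psi}_{0})$ of the mean-field problem~\eqref{e:min-mean-field}. Applying the $\Gamma$-limsup inequality to $(\Psi^{*}, \nnu^{*})$ with the given initial data $\y^{N}_{0}$ (which satisfy $W_{1}(\Psi^{N}_{0}, \widehat{\Psi}_{0}) \to 0$ by hypothesis), I obtain a sequence $(\widehat{\y}^{N}, \widehat{\uu}^{N}) \in \mathcal{S}(\y^{N}_{0})$ with $\limsup_{N} \E_{N}^{\y^{N}_{0}}(\widehat{\y}^{N}, \widehat{\uu}^{N}) \leq \E^{\widehat{\Psi}_{0}}(\Psi^{*}, \nnu^{*})$, the right-hand side being the minimum value of the limit functional. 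Since $(\y^{N}, \uu^{N})$ minimizes $\E_{N}^{\y^{N}_{0}}$ among all admissible pairs sharing the same initial datum, we have $\E_{N}^{\y^{N}_{0}}(\y^{N}, \uu^{N}) \leq \E_{N}^{\y^{N}_{0}}(\widehat{\y}^{N}, \widehat{\uu}^{N})$ for every $N$, whence $\limsup_{N} \E_{N}^{\y^{N}_{0}}(\y^{N}, \uu^{N}) \leq \E^{\widehat{\Psi}_{0}}(\Psi^{*}, \nnu^{*})$. In particular this delivers the uniform bound $\sup_{N} \E_{N}^{\y^{N}_{0}}(\y^{N}, \uu^{N}) < +\infty$, which is exactly hypothesis~\eqref{e:energy-bound}.

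With this bound at hand, I would invoke Proposition~\ref{p:mean-field-dynamics}: up to a subsequence, $\Psi^{N} \to \Psi$ in $C([0,T]; (\sP_{1}(Y); W_{1}))$ and $\nnu^{N} \rightharpoonup \nnu$ weakly$^{*}$ in $\mathcal{M}([0,T] \times \overline{Y}; \R^{d})$, with limit pair $(\Psi, \nnu) \in \mathcal{S}(\widehat{\Psi}_{0})$. The $\Gamma$-liminf inequality then yields $\E^{\widehat{\Psi}_{0}}(\Psi, \nnu) \leq \liminf_{N} \E_{N}^{\y^{N}_{0}}(\y^{N}, \uu^{N})$. Chaining the two estimates produces
\[
\E^{\widehat{\Psi}_{0}}(\Psi^{*}, \nnu^{*}) \leq \E^{\widehat{\Psi}_{0}}(\Psi, \nnu) \leq \liminf_{N} \E_{N}^{\y^{N}_{0}}(\y^{N}, \uu^{N}) \leq \limsup_{N} \E_{N}^{\y^{N}_{0}}(\y^{N}, \uu^{N}) \leq \E^{\widehat{\Psi}_{0}}(\Psi^{*}, \nnu^{*}),
\]
so that all the quantities coincide. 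This forces $(\Psi, \nnu)$ to be a minimizer of~\eqref{e:min-mean-field} and, reading off the chain, $\lim_{N} \E_{N}^{\y^{N}_{0}}(\y^{N}, \uu^{N}) = \E^{\widehat{\Psi}_{0}}(\Psi, \nnu)$, which is precisely the claimed convergence of energies.

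I do not expect a genuine obstacle, since all the analytical content has been absorbed into Theorem~\ref{t:Gamma-convergence} and the preceding propositions. The only points requiring care are bookkeeping ones: first, one must ensure that the recovery sequence produced by the $\Gamma$-limsup is built over the prescribed initial data $\y^{N}_{0}$, so that the minimality comparison $\E_{N}^{\y^{N}_{0}}(\y^{N}, \uu^{N}) \leq \E_{N}^{\y^{N}_{0}}(\widehat{\y}^{N}, \widehat{\uu}^{N})$ is legitimate; and second, one must confirm that the bound extracted from this comparison matches the hypothesis~\eqref{e:energy-bound} needed to trigger the compactness of Proposition~\ref{p:mean-field-dynamics}.
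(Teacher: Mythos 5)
Your proposal is correct and is exactly the standard argument the paper invokes: the paper's proof of Corollary~\ref{c:minima} simply cites the classical $\Gamma$-convergence machinery together with Proposition~\ref{p:mean-field-dynamics} and Theorem~\ref{t:Gamma-convergence}, which is precisely the chain of inequalities you spell out. The two bookkeeping points you flag (building the recovery sequence over the prescribed initial data, and matching the resulting bound with hypothesis~\eqref{e:energy-bound}) are indeed the only things to check, and both go through as you describe.
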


\begin{proof}
The result is standard in $\Gamma$-convergence theory (see, \emph{e.g.},~\cite{Braides2002,DM1993}) and follows from the compactness result in Proposition~\ref{p:mean-field-dynamics} and from Theorem~\ref{t:Gamma-convergence}.
\end{proof}


Before proving Proposition~\ref{p:mean-field-dynamics} and Theorem~\ref{t:Gamma-convergence}, we state two lemmas relating the control part of the cost functional~$\E_{N}^{\y^{N}_{0}}$ and the functionals~$ \overline{\Phi}$ and~$\overline{\phi}$ defined in~\eqref{e:phi-bar}.

\begin{lemma}
\label{l:control1}
Let $N \in \mathbb{N}$, let $(\y^{N}, \uu^{N}) \in AC([0,T]; Y^{N}) \times L^{1}([0,T]; K^{N})$, and let~$(\Psi^{N},\nnu^{N}) \in AC([0,T]; (\sP^{N}(Y); W_{1}))\times \mathcal{M}([0,T]\times  \overline{Y};\R^d)$ be the pair generated by~$(\y^{N},\uu^{N})$; finally, let
\begin{equation}
\label{e:mu}
\mmu^{N}_{t} \coloneq \frac{1}{N} \sum_{i=1}^{N} u_{i} (t) \delta_{y_{i}(t)} \,, \qquad \mmu^{N} \coloneq \mmu^{N}_{t} \otimes \mathcal{L}^{1}\res[0,T] \,.
\end{equation}
Then, for a.e.~$t \in [0,T]$ we have
\begin{equation}
\label{e:aux1}
\frac{1}{N} \sum_{i=1}^{N} \phi(u_{i}(t)) \geq \overline{\phi} ( \Psi^{N}_{t}, \mmu^{N}_{t}) \,.
\end{equation}
If $\y_{0}^{N} \in Y^{N}$ and the pair $(\y^{N}, \uu^{N}) \in \mathcal{S} (\y_{0}^{N})$ is such that $u_{i}(t) = 0$ if $h_{\Psi^{N}_{t}} (y_{i}(t)) = 0$ for $i=1, \ldots, N$ and $t \in [0,T]$, then for a.e.~$t \in [0,T]$, we have that
\begin{equation}
\label{e:aux2}
\frac{1}{N} \sum_{i=1}^{N} \phi(u_{i}(t)) = \overline{\phi} (\Psi^{N}_{t}, \mmu^{N}_{t}) \,.
\end{equation} 
\end{lemma}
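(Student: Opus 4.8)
The plan is to prove the pointwise inequality~\eqref{e:aux1} and the equality~\eqref{e:aux2} by directly comparing the two sides through the structure of the generated measures $\Psi^N_t$ and $\mmu^N_t$. The essential tool is Jensen's inequality applied to the convex function $\phi$, combined with the fact that both measures are supported on the same finite set of atoms $\{y_1(t),\ldots,y_N(t)\}$.

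\textbf{Step 1: Reduce to the density.} First I would fix a.e.\ $t\in[0,T]$ and compute the Radon--Nikod\'ym derivative $\frac{\di \mmu^N_t}{\di \Psi^N_t}$. Since $\Psi^N_t = \frac{1}{N}\sum_{i=1}^N \delta_{y_i(t)}$ and $\mmu^N_t = \frac{1}{N}\sum_{i=1}^N u_i(t)\delta_{y_i(t)}$, the measure $\mmu^N_t$ is absolutely continuous with respect to $\Psi^N_t$. The subtlety is that several particles may coincide: if $y_{i_1}(t)=\cdots=y_{i_m}(t)=\bar y$, then the atom at $\bar y$ carries mass $\frac{m}{N}$ under $\Psi^N_t$ and vector mass $\frac{1}{N}\sum_{j=1}^m u_{i_j}(t)$ under $\mmu^N_t$, so the density at $\bar y$ equals the \emph{average} $\frac{1}{m}\sum_{j=1}^m u_{i_j}(t)$ rather than any individual $u_{i_j}(t)$. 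I would record this carefully.

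\textbf{Step 2: Apply Jensen atomwise.} By the definition~\eqref{e:phi-bar} of $\overline\phi$, we have
\begin{equation*}
\overline\phi(\Psi^N_t,\mmu^N_t) = \int_{\overline Y}\phi\Big(\frac{\di\mmu^N_t}{\di\Psi^N_t}(y)\Big)\,\di\Psi^N_t(y) = \sum_{\bar y}\frac{m_{\bar y}}{N}\,\phi\Big(\frac{1}{m_{\bar y}}\sum_{j:\,y_{i_j}(t)=\bar y}u_{i_j}(t)\Big),
\end{equation*}
where the outer sum runs over distinct atoms $\bar y$ with multiplicity $m_{\bar y}$. For each atom, the convexity of $\phi$ (assumption~$(\phi_1)$) gives by Jensen's inequality $\phi\big(\frac{1}{m_{\bar y}}\sum_j u_{i_j}(t)\big)\le \frac{1}{m_{\bar y}}\sum_j\phi(u_{i_j}(t))$. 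Multiplying by $\frac{m_{\bar y}}{N}$ and summing over all atoms reassembles the right-hand side into $\frac{1}{N}\sum_{i=1}^N\phi(u_i(t))$, which yields~\eqref{e:aux1}.

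\textbf{Step 3: The equality case.} For~\eqref{e:aux2}, the hypothesis $(\y^N,\uu^N)\in\mathcal S(\y^N_0)$ forces the trajectories to solve~\eqref{e:Cauchy} with unique solutions, so that particles starting from distinct data stay distinct; when the atoms of $\Psi^N_t$ are simple (multiplicity one) for a.e.\ $t$, the density is exactly $u_i(t)$ and Jensen holds with equality trivially. The remaining concern is the set where some $h_{\Psi^N_t}(y_i(t))=0$: there the assumption $u_i(t)=0$ guarantees that such particles contribute $\phi(0)=0$ on both sides and do not spoil the matching, and the coincidence of atoms on a positive-measure time set would require equality of full trajectories, which uniqueness rules out for distinct initial data. \textbf{The main obstacle} is precisely this bookkeeping of coinciding particles: I expect the inequality~\eqref{e:aux1} to be essentially immediate from Jensen, but the passage to equality in~\eqref{e:aux2} demands care in arguing that, under the dynamics~\eqref{e:Cauchy} and the vanishing-control convention, the atoms are generically simple so that no nontrivial averaging occurs.
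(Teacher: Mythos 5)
Your Steps 1 and 2 are correct and coincide with the route taken in the paper (which defers to \cite[Lemma~6.2]{Flos}): the inequality \eqref{e:aux1} is exactly Jensen's inequality applied atomwise to the Radon--Nikod\'ym derivative $\di\mmu^N_t/\di\Psi^N_t$, with the density at a multiple atom correctly identified as the average of the controls of the coinciding particles. The gap is in Step 3. You claim that coincidence of atoms on a positive-measure time set is ruled out by uniqueness for \eqref{e:Cauchy}, so that the atoms are ``generically simple'' and no averaging occurs. This is not what uniqueness gives you. The components $y_i$ and $y_j$ solve \emph{different} ODEs: the drift $b_{\Psi^N_t}$ is common, but the control terms $h_{\Psi^N_t}(y_i)u_i$ and $h_{\Psi^N_t}(y_j)u_j$ involve different controls, so uniqueness of the coupled system in $Y^N$ says nothing about whether two of its components can collide and then travel together (for instance, two particles can be steered to the same point and subsequently follow the same uncontrolled flow inside $\{h=0\}$). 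Moreover the lemma does not assume the $y_{0,i}$ are distinct, and agreement of two absolutely continuous curves on a set of positive measure does not force agreement of the full trajectories. So the obstacle you flag at the end is real, and your proposed way around it fails.

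The argument the paper actually makes accepts that particles may coincide and shows instead that coinciding particles must carry \emph{equal controls}. If $y_i(t)=y_j(t)$ for $t$ in a measurable set $S\subseteq[0,T]$, then $\dot y_i(t)=\dot y_j(t)$ for a.e.\ $t\in S$ (a standard fact for AC functions agreeing on a measurable set); subtracting the two equations in \eqref{e:Cauchy} yields $h_{\Psi^N_t}(y_i(t))\,u_i(t)=h_{\Psi^N_t}(y_j(t))\,u_j(t)=h_{\Psi^N_t}(y_i(t))\,u_j(t)$ a.e.\ on $S$. Hence $u_i=u_j$ a.e.\ on $S\cap\{h_{\Psi^N_t}(y_i(t))\neq0\}$, while on $S\cap\{h_{\Psi^N_t}(y_i(t))=0\}$ the standing hypothesis forces $u_i=u_j=0$. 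Therefore $u_i=u_j$ a.e.\ on $S$, the average in your Step 1 is an average of identical vectors, and Jensen holds with equality. This is exactly where the vanishing-control convention is needed: without it, two coinciding particles sitting in $\{h=0\}$ could carry different controls without violating the ODE, the Radon--Nikod\'ym derivative would be a genuine average, and \eqref{e:aux2} could fail (strictly, if $\phi$ is strictly convex). Replace your Step 3 with this argument and the proof is complete.
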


\begin{proof}
The proof of~\eqref{e:aux1} can be found in~\cite[Lemma~6.2, formula~(6.2)]{Flos}. Arguing as in the proof of~\cite[Lemma~6.2, formula~(6.3)]{Flos} we may also prove~\eqref{e:aux2}. 
Referring to the notation in \cite[Lemma~6.2]{Flos}, the only modification we have to make is that, whenever $y_{i}(t) = y_{j}(t)$ for $t \in S \subseteq[0,T]$ and for some $i\neq j$, the equality~$\dot{y}_{i}(t) = \dot{y}_{j}(t)$ for a.e.~$t \in S$ only implies that $h_{\Psi^{N}_{t}} (y_{i}(t) ) u_{i}(t) = h_{\Psi^{N}_{t}} (y_{j}(t)) u_{j}(t)$ for a.e.~$t \in S$. 
Therefore, for a.e.~$t \in S \cap \{ h_{\Psi^{N}_{t}} (y_{i}(t)) \neq0\}$ we have~$u_{i}(t) = u_{j}(t)$. Instead, for a.e.~$t \in S \cap \{ h_{\Psi^{N}_{t}} (y_{i}(t)) = 0\}$ we have $u_{i}(t) = u_{j}(t) = 0$ by assumption. This implies that $u_{i}(t) = u_{j}(t)$ a.e.~in~$S$, and the proof can be concluded as in~\cite[Lemma~6.2]{Flos}.
\end{proof}

\begin{lemma}
\label{l:AGS}
Let $\widehat{\Psi}_{0} \in \sP_{c}(Y)$ and $\y^{N}_{0} \in Y^{N}$ be such that the generated measure~$\Psi_{0}^{N}$ converges to~$\widehat{\Psi}_{0}$ in the 1-Wasserstein distance. Let~$(\y^{N}, \uu^{N}) \in \mathcal{S}(\y^{N}_{0})$ and let $(\Psi^{N}, \nnu^{N}) \in AC([0,T]; (\sP_{1}(Y); W_{1})) \times \mathcal{M}([0,T]\times  \overline{Y}; \R^{d})$ be the corresponding generated measures, according to Definition~\ref{d:generate}. Assume that
\begin{displaymath}
\sup_{N \in \mathbb{N}} \, \E_{N}^{\y^{N}_{0}} ( \y^{N}, \uu^{N}) < +\infty
\end{displaymath}
and that $\Psi^{N} \to \Psi$ uniformly in $C([0,T]; (\sP_{1}(Y); W_{1}))$ and $\nnu^{N} \rightharpoonup \nnu$ weakly$^{*}$ in~$\mathcal{M}([0,T]\times  \overline{Y} ; \R^{d})$. Then, $\nnu \ll h_{\Psi} \Psi$, $\frac{\di \nnu}{\di h_{\Psi} \Psi} \in K$ for $h_{\Psi} \Psi$-a.e.~$(t, y) \in [0,T]\times  \overline{Y}$, and
\begin{equation}
\label{e:lsc-phi}
\Phi_{\rm min} (\Psi, \nnu) \leq \liminf_{N\to\infty} \, \frac{1}{N} \sum_{i=1}^{N} \fint_{0}^{T} \phi(u_{i}(t)) \, \di t \,.
\end{equation}
\end{lemma}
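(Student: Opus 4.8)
The plan is to reduce the statement, exactly as in the existence proof for~\eqref{e:min-mean-field}, to the lower semicontinuity result~\cite[Lemma~9.4.3]{AmbGigSav08} applied not to the fluxes~$\nnu^N$ themselves but to the auxiliary measures~$\mmu^N$ of~\eqref{e:mu}, which carry the controls \emph{without} the activation factor. First I would secure a uniform support bound: since $\widehat{\Psi}_0\in\sP_c(Y)$ and $W_1(\Psi^N_0,\widehat{\Psi}_0)\to0$, the initial data are uniformly bounded, and Proposition~\ref{p:bound-N-particles} then yields $R>0$ with $\spt(\Psi^N_t)\subseteq\B^Y_R$ for all $N$ and $t$ (whence also $\spt(\Psi_t)\subseteq\B^Y_R$). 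Since the controls take values in the compact set $K$, one has the domination $|\mmu^N_t|\le C_K\,\Psi^N_t$ with $C_K\coloneqq\max_{u\in K}|u|$; thus the $\mmu^N$ are supported in the fixed compact set $[0,T]\times\B^Y_R$ and uniformly bounded in total variation, so up to a subsequence $\mmu^N\rightharpoonup\mmu$ weakly$^*$ in $\mathcal M([0,T]\times\overline Y;\R^d)$.

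Next I would identify the limit flux as $\nnu=h_\Psi\mmu$. Since by construction $\nnu^N=h_{\Psi^N}\mmu^N$, for every $\varphi\in C_b([0,T]\times\overline Y;\R^d)$ I write $\int\varphi\cdot\di\nnu^N=\int h_{\Psi^N}\varphi\cdot\di\mmu^N$ and split off the term $\int(h_{\Psi^N}-h_\Psi)\varphi\cdot\di\mmu^N$. Assumption~$(h_2)$ together with the uniform support bound and $\sup_t W_1(\Psi^N_t,\Psi_t)\to0$ gives $\sup_{[0,T]\times\B^Y_R}|h_{\Psi^N}-h_\Psi|\to0$, so this term vanishes; the remaining term $\int h_\Psi\varphi\cdot\di\mmu^N$ converges to $\int h_\Psi\varphi\cdot\di\mmu$ because $h_\Psi$ is continuous and bounded (by $(h_1)$–$(h_2)$) and all measures live on the same compact set. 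Hence $\nnu=h_\Psi\mmu$.

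I would then invoke the lower semicontinuity lemma. Combining the energy bound~\eqref{e:energy-bound} with~\eqref{e:aux1} of Lemma~\ref{l:control1} yields
\[
\sup_N\overline\Phi(\Psi^N,\mmu^N)\le\sup_N\frac1N\sum_{i=1}^N\fint_0^T\phi(u_i(t))\,\di t<+\infty,
\]
so, by $(\phi_1)$ (convexity and superlinearity), \cite[Lemma~9.4.3]{AmbGigSav08} applies with the product reference $\Psi=\Psi_t\otimes\mathcal L^1\res[0,T]$ and gives $\mmu\ll\Psi$ together with $\overline\Phi(\Psi,\mmu)\le\liminf_N\overline\Phi(\Psi^N,\mmu^N)$. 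Setting $w\coloneqq\frac{\di\mmu}{\di\Psi}$, the convexity and compactness of $K$ with $0\in K$ force $w\in K$ for $\Psi$-a.e.\ $(t,y)$; since $\nnu=h_\Psi\mmu=w\,h_\Psi\Psi$, we obtain $\nnu\ll h_\Psi\Psi$ and $\frac{\di\nnu}{\di(h_\Psi\Psi)}=w\in K$ for $h_\Psi\Psi$-a.e.\ $(t,y)$. As $w$ is then an admissible competitor in~\eqref{e:limit-phi1}, chaining the definitions~\eqref{e:limit-phi1}–\eqref{e:phi-bar} gives
\[
\Phi_{\rm min}(\Psi,\nnu)\le\Phi(w,\Psi)=\overline\Phi(\Psi,\mmu)\le\liminf_N\overline\Phi(\Psi^N,\mmu^N)\le\liminf_N\frac1N\sum_{i=1}^N\fint_0^T\phi(u_i(t))\,\di t,
\]
which is~\eqref{e:lsc-phi}. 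To make the final $\liminf$ rigorous despite the subsequence used to extract $\mmu$, I would first pass to a subsequence realizing the right-hand $\liminf$ and only then extract $\mmu^N\rightharpoonup\mmu$ inside it; the limit $\nnu$ is unaffected, since $\nnu^N\rightharpoonup\nnu$ is assumed along the full sequence.

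The main obstacle is precisely the coupling of the flux~$\nnu$ with the control through the activation factor~$h$, which is allowed to vanish: one cannot take Radon–Nikodym densities of~$\nnu$ against $h_\Psi\Psi$ directly, nor apply the lower semicontinuity lemma to~$\nnu$, because $h_\Psi$ degenerates on $\{h_\Psi=0\}$. Passing to the auxiliary measures~$\mmu^N$—where absolute continuity with respect to $\Psi^N$ is automatic and the lower semicontinuity of~$\overline\Phi$ is the natural one—and only afterwards multiplying by $h_\Psi$ to recover~$\nnu$ is what circumvents the degeneracy; the technical crux is then the uniform convergence $h_{\Psi^N}\to h_\Psi$ on the common compact support, which is what allows the identification $\nnu=h_\Psi\mmu$.
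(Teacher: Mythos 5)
Your proposal is correct and follows essentially the same route as the paper: it introduces the auxiliary measures $\mmu^{N}$ of~\eqref{e:mu}, uses the uniform support bound from Proposition~\ref{p:bound-N-particles} and $(h_{2})$ to identify $\nnu = h_{\Psi}\mmu$, bounds $\overline{\Phi}(\Psi^{N},\mmu^{N})$ via Lemma~\ref{l:control1}, and applies \cite[Lemma~9.4.3]{AmbGigSav08} followed by the convexity and compactness of $K$ to extract $w = \frac{\di\mmu}{\di\Psi} \in K$ and conclude. The extra care you take with the uniform convergence $h_{\Psi^{N}} \to h_{\Psi}$ on the common compact support and with the subsequence realizing the $\liminf$ only makes explicit what the paper leaves implicit.
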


\begin{proof}
We define the auxiliary measures~$\mmu^{N}$ and~$\mmu^{N}_{t}$ as in~\eqref{e:mu} and we notice that~$\nnu^{N} = h_{\Psi^{N}} \mmu^{N}$, $\nnu^{N}_{t} = h_{\Psi^{N}_{t}} \mmu^{N}_{t}$ for $t \in [0,T]$. In view of Proposition~\ref{p:bound-N-particles}, both~$\mmu^{N}$ and~$\nnu^{N}$ are supported on a compact subset of~$[0,T] \times Y$ and are bounded in $\mathcal{M}([0,T]\times  \overline{Y};\R^{d})$. In particular, we deduce that there exists~$\mmu \in \mathcal{M}([0,T]\times  \overline{Y}; \R^{d})$ such that, up to a not relabelled subsequence, $\mmu^{N} \rightharpoonup \mmu$ weakly$^{*}$ in~$\mathcal{M}([0,T]\times  \overline{Y}; \R^{d})$. Since~$(h_{2})$ holds, $\Psi^{N} \to \Psi$ uniformly in $C([0,T]; (\sP_{1}(Y); W_{1}))$, and~$\mmu^{N}$ and~$\nnu^{N}$ have uniformly compact support, in the limit it holds $\nnu = h_{\Psi} \mmu$.

By Lemma~\ref{l:control1} and by the boundedness of the energy~$\E_{N}^{\y^{N}_{0}}$, it is clear that
\begin{displaymath}
\sup_{N \in \mathbb{N}} \overline{\Phi}( \Psi^{N}, \mmu^{N}) <+\infty\,.
\end{displaymath}
Hence, we can apply~\cite[Lemma~9.4.3]{AmbGigSav08} to infer that, in the limit, $\mmu \ll \Psi$ and
\begin{equation}
\label{e:mu2}
\overline{\Phi}( \Psi, \mmu) \leq \liminf_{N\to\infty} \, \overline{\Phi}(  \Psi^{N}, \mmu^{N})\,.
\end{equation}
Furthermore, being~$K$ a convex and compact set with~$0 \in K$, we have that $\frac{\di \mmu}{\di \Psi}(t, y) \in K$ for $\Psi$-a.e.~$(t, y) \in [0,T] \times  \overline{Y}$, which implies that $\nnu \ll h_{\Psi}\Psi$ and, denoting $w \coloneq \frac{\di \mmu}{\di \Psi}$, 
\begin{displaymath}
\frac{\di \nnu}{\di (h_{\Psi}\Psi)}(t, y) =  \frac{\di (h_{\Psi}\mmu)}{\di (h_{\Psi}\Psi)} (t, y)= w(t, y) \in K \qquad \text{for $h_{\Psi}\Psi$-a.e.~$(t, y) \in [0,T] \times  \overline{Y}$},
\end{displaymath}
so that $\nnu = w h_{\Psi} \Psi$ and $\Phi(w, \Psi) = \overline{\Phi} ( \Psi, \mmu)$. Finally, by definition of~$\Phi_{\rm min}$ in~\eqref{e:limit-phi1} we get~\eqref{e:lsc-phi}.
\end{proof}

We now prove Proposition~\ref{p:mean-field-dynamics}.

\begin{proof}[Proof of Proposition~\ref{p:mean-field-dynamics}]
Let $\y^{N}_{0}$, $(\y^{N}, \uu^{N})$, $(\Psi^{N}, \nnu^{N})$, and~$\Psi^{N}_{0}$ be as in the statement of the proposition. Since~$W_1(\Psi^{N}_{0},\widehat{\Psi}_{0})\to0$ as $N\to\infty$, by Proposition~\ref{p:bound-N-particles} we obtain that for every~$t\in [0,T]$ the probability measure~$\Psi^{N}_{t}$ has support contained in the compact set~$\B^{Y}_{R}$ for a suitable~$R>0$ independent of~$t$ and~$N$. This implies that the curve~$\Psi^{N}$ takes values in a compact subset of~$\sP_{1}(Y)$ with respect to the 1-Wasserstein distance. Let us now show that the sequence~$\Psi^{N}$ is equi-continuous. Thanks to the assumptions~$(v_{1})$, $(\T_{1})$, and~$(h_{1})$, to the fact that~$\uu^{N}(t) \in K^{N}$ and $\spt ( \Psi^{N}_{t}) \subseteq \B^{Y}_{R}$  for $t \in [0,T]$, for every~$s<t \in [0,T]$ we estimate
\begin{equation}
\label{e:estimate-W1}
\begin{split}
W_{1}(\Psi^{N}_{s}, \Psi^{N}_{t}) & = \sup\bigg\{ \int_{Y} \eta(y) \, \di (\Psi^{N}_{t} - \Psi^{N}_{s})(y): \eta \in \Lip_{1}(Y) \bigg\}
\\
&
= \sup \bigg\{ \frac{1}{N} \sum_{i=1}^{N} \big( \eta(y_{i}(t)) - \eta(y_{i}(s)) \big) : \eta \in \Lip_{1}(Y) \bigg\} 
\\
&
\leq \frac{1}{N} \sum_{i=1}^{N} \int_{s}^{t} \| b_{\Psi^{N}_{\tau}} (y_{i} (\tau) ) \|_{\overline Y} \, \di \tau + \frac{1}{N} \sum_{i=1}^{N} \int_{s}^{t} | h_{\Psi^{N}_{\tau}} (y_{i} (\tau) ) u_{i}(\tau)| \, \di \tau
\\
&
\leq (2(M_{v} + M_{\T}) (1 + R) + C) |t - s|\,,
\end{split}
\end{equation}
for a positive constant~$C$ independent of~$t$, $s$, and~$N$. Therefore,~$\Psi^{N}$ is equi-continuous in~$C([0,T]; (\sP_{1}(Y); W_{1}))$ and, by Ascoli-Arzel\`a Theorem, it converges, up to a subsequence, to a limit curve~$\Psi$ in~$C([0,T]; (\sP_{1}(Y); W_{1}))$. By~\eqref{e:estimate-W1}, $\Psi$ is also Lipschitz continuous.

Since~$\uu^{N}$ takes values in~$K^{N}$ with~$K$ compact and~$h_{\Psi^{N}}$ is bounded by~$(h_{1})$, we have that, up to a further subsequence,~$\nnu^{N} \rightharpoonup \nnu$ weakly$^*$ in~$\mathcal{M}([0,T] \times  \overline{Y}; \R^{d})$. Since the cost functional~$\E_{N}^{\y^{N}_{0}} (\y^{N}, \uu^{N})$ is bounded, we deduce from Lemma~\ref{l:AGS} that~$\nnu \ll h_{\Psi}\Psi$ and $\frac{\di \nnu}{\di h_{\Psi}\Psi} (t, y) \in K$ for $h_{\Psi}\Psi$-a.e.~$(t, y) \in [0,T]\times  \overline{Y}$.

We finally show that~$(\Psi, \nnu)$ solves the corresponding continuity equation in the sense of distributions. By the uniform convergence of~$\Psi^{N}$ to $\Psi$, we have that $\Psi_{0}= \widehat{\Psi}_{0}$. For every test function~$\varphi \in C^{\infty}_{c}((0,T) \times \overline{Y})$, since $(\y^{N}, \uu^{N}) \in \mathcal{S}(\y^{N}_{0})$, we have that, for every $t \in [0,T]$,
\begin{equation}
\label{e:lim1}
\begin{split}
\int_{ \overline{Y}} \varphi(t, y) \, \di \Psi_{t}^{N}(y) & = \int_{0}^{t} \int_{ \overline{Y}} \partial_{t} \varphi(\tau,  y) \, \di \Psi^{N}_{\tau}(y) \, \di \tau + \int_{0}^{t} \int_{ \overline{Y}} \nabla \varphi (\tau, y) \cdot b_{\Psi^{N}_{\tau}} (y) \, \di \Psi^{N}_{\tau}(y) \, \di \tau
\\
&
\qquad  + \int_{0}^{t} \int_{ \overline{Y}} \nabla_{x} \varphi (\tau, y) \, \di \nnu^{N}_{t}( y)\, \di t 
\end{split}
\end{equation}
Since $\Psi^{N} \to \Psi$ in~$C([0,T]; (\sP_{1}(Y); W_{1}))$ and $\nnu^{N} \rightharpoonup \nnu$ weakly$^{*}$ in~$\mathcal{M}([0,T]\times  \overline{Y}; \R^{d})$, we only have to determine the limit of the second integral on the right-hand side of~\eqref{e:lim1}. To do this, we estimate
\begin{equation}
\label{e:lim2}
\begin{split}
\bigg| \int_{0}^{t}   \bigg( \int_{ \overline{Y}} &  \nabla \varphi (\tau, y) \cdot b_{\Psi^{N}_{\tau}} (y) \, \di \Psi^{N}_{\tau}(y) - \int_{ \overline{Y}} \nabla \varphi (\tau, y) \cdot b_{\Psi_{\tau}} (y) \, \di \Psi_{\tau}(y) \bigg) \, \di \tau \bigg|
\\
&
\leq \int_{0}^{t} \int_{ \overline{Y}} \big | \nabla \varphi (\tau, y) \cdot \big( b_{\Psi^{N}_{\tau}} (y) - b_{\Psi_{\tau}} (y)\big) \big| \, \di \Psi^{N}_{\tau}(y) \, \di \tau 
\\
&
\qquad +  \bigg|\int_{0}^{t} \int_{ \overline{Y}}  \nabla \varphi (\tau, y) \cdot b_{\Psi_{\tau}} (y) \, \di ( \Psi^{N}_{\tau} - \Psi_{\tau}) (y) \, \di \tau \bigg|= : I^{N}_{1} + I^{N}_{2}\,. 
\end{split}
\end{equation}
By the regularity of the test function~$\varphi$, by assumptions~$(v_{2})$ and~$(\T_{2})$, and by the uniform inclusion $\spt(\Psi^{N}_{t}) \subseteq \B^{Y}_{R}$, we may estimate~$I_{1}^{N}$ with
\begin{align*}
I^{N}_{1} \leq L_{R}\| \varphi\|_{C^{\infty}_{c}((0,T)\times \overline{Y})}  \int_{0}^{t} W_{1}( \Psi^{N}_{\tau} , \Psi_{\tau}) \, \di \tau\,,
\end{align*}
for a positive constant~$L_{R}$ depending only on~$R$. Since $\Psi^{N} \to \Psi$ in $C([0,T]; (\sP_{1}(Y); W_{1}))$, we deduce from the previous inequality that~$I_{1}^{N} \to 0$ as $N\to \infty$. Again by~$(v_{2})$ and~$(\T_{2})$, the function $y \mapsto \nabla \varphi(t, y) b_{\Psi_{t}}(t)$ is Lipschitz continuous on~$\B^{Y}_{R}$ for every $t \in [0,T]$, with Lipschitz constant~$C_{R}>0$ uniformly bounded in time. Since $\spt(\Psi_{t}), \spt(\Psi^{N}_{t}) \subseteq \B^{Y}_{R}$ for $t \in [0,T]$, we estimate~$I^{N}_{2}$ with
\begin{align*}
I^{N}_{2} \leq C_{R} \int_{0}^{t} W_{1}(\Psi^{N}_{\tau}, \Psi_{\tau}) \, \di \tau\,,
\end{align*}
and $I^{N}_{2} \to 0$ as $N\to\infty$. We can now pass to the limit in~\eqref{e:lim2} to obtain that
\begin{displaymath}
\lim_{N\to\infty} \int_{0}^{t}   \int_{ \overline{Y}}   \nabla \varphi (\tau, y) \cdot b_{\Psi^{N}_{\tau}} (y) \, \di \Psi^{N}_{\tau}(y)\, \di \tau = \int_{0}^{t}    \int_{ \overline{Y}}   \nabla \varphi (\tau, y) \cdot b_{\Psi_{\tau}} (y) \, \di \Psi_{\tau}(y) \, \di \tau\,,
\end{displaymath}
which in turn implies, by passing to the limit in~\eqref{e:lim1}, that
\begin{align*}
\int_{ \overline{Y}} \varphi(t, y) \, \di \Psi_{t} (y) & = \int_{0}^{t} \int_{Y} \partial_{t} \varphi(\tau,  y) \, \di \Psi_{\tau}(y) \, \di \tau + \int_{0}^{t} \int_{ \overline{Y}} \nabla \varphi (\tau, y) \cdot b_{\Psi_{\tau}} (y) \, \di \Psi_{\tau}(y) \, \di \tau
\\
&
\qquad  + \int_{0}^{t} \int_{ \overline{Y}} \nabla_{x} \varphi (\tau, y) \, \di \nnu_{t}( y)\, \di t \,.
\end{align*}
By the arbitrariness of~$\varphi \in C^{\infty}_{c}((0,T)\times \overline{Y})$, we conclude that~$(\Psi, \nnu) \in \mathcal{S}(\widehat{\Psi}_{0})$. This completes the proof.
\end{proof}

Eventually, we prove the $\Gamma$-convergence result.

\begin{proof}[Proof of Theorem~\ref{t:Gamma-convergence}]
We divide the proof into two steps.

\noindent\textbf{Step~1: $\Gamma$-liminf inequality}. Let $(\Psi, \nnu)$,~$(\y^{N}, \uu^{N})$,~$\y^{N}_{0}$, $(\Psi^{N},\nnu^{N})$, and~$\Psi^{N}_{0}$ be as in the statement. If $\liminf_{N\to\infty} \E_{N}^{\y^{N}_{0}} (\y^{N}, \uu^{N}) =+\infty$ there is nothing to show. Without loss of generality we may therefore assume that
\begin{displaymath}
\sup_{N \in \mathbb{N}} \, \E_{N}^{\y^{N}_{0}} (\y^{N}, \uu^{N}) <+\infty\,,
\end{displaymath}
which implies, by definition~\eqref{e:cost-N-particles} of~$\E_{N}^{\y^{N}_{0}}$, that $(\y^{N}, \uu^{N}) \in \mathcal{S}(\y^{N}_{0})$ for every~$N$. Furthermore, by Proposition~\ref{p:bound-N-particles} there exists $R>0$ independent of~$N$ and~$t$ such that $\spt(\Psi^{N}_{t}) \subseteq \B^{Y}_{R}$. By Proposition~\ref{p:mean-field-dynamics} we have that the limit pair~$(\Psi, \nnu)$ belongs to~$\mathcal{S}(\widehat{\Psi}_{0})$ and~$\spt(\Psi_{t}) \subseteq \B^{Y}_{R}$ for every $t \in [0,T]$. Applying Lemma~\ref{l:AGS} we infer that
\begin{equation}
\label{e:liminf1}
\Phi_{\rm min}( \Psi, \nnu) \leq \liminf_{N \to \infty} \frac{1}{N} \sum_{i=1}^{N} \fint_{0}^{T} \phi(u_{i}^{N} (t)) \, \di t \,.
\end{equation}
Since $\cL_{N}$ $\sP_{1}$-converges to~$\cL$ uniformly on compact sets and~$\spt(\Psi^{N}_{t}) , \spt(\Psi_{t}) \subseteq \B^{Y}_{R}$, we get that
\begin{equation}
\label{e:liminf2}
\begin{split}
&\fint_{0}^{T} \int_{Y} \cL(y, \Psi_{t}) \, \di \Psi_{t}(y) \, \di t  = \fint_{0}^{T} \int_{\B^{Y}_{R}}  \cL(y, \Psi_{t}) \, \di \Psi_{t}(y) \, \di t \\
=& \lim_{N\to \infty} \fint_{0}^{T} \int_{\B^{Y}_{R}} \mathcal{L}_{N}(y, \Psi^{N}_{t}) \, \di \Psi^{N}_{t}(y) \, \di t 
= \lim_{N\to \infty} \fint_{0}^{T} \int_{Y}\mathcal{L}_{N}(y, \Psi^{N}_{t}) \, \di \Psi^{N}_{t}(y) \, \di t 
\\
= &
 \lim_{N\to \infty} \frac{1}{N} \sum_{i=1}^{N} \fint_{0}^{T} \mathcal{L}_{N}(y_{i}^{N}(t), \Psi^{N}_{t}) \, \di t \,. 
\end{split}
\end{equation}
Combining~\eqref{e:liminf1} and~\eqref{e:liminf2} we conclude that
\begin{displaymath}
\E^{\widehat{\Psi}_{0}} ( \Psi, \nnu) \leq \liminf_{N \to \infty} \, \E^{\y^{N}_{0}}_{N} ( \y^{N}, \uu^{N}) \,,
\end{displaymath}
which is \eqref{gamma_liminf}.

\noindent\textbf{Step~2: $\Gamma$-limsup inequality}. 
We will construct a sequence $(\y^{N},\uu^{N})$ such that
\begin{equation}\label{gamma_limit}
\E^{\widehat\Psi_0}(\Psi,\nnu)=\lim_{N\to\infty} \E_{N}^{\y_{0}^{N}}( \y^{N}, \uu^{N})
\end{equation}
and we recall that this condition is equivalent to 
\eqref{gamma_limsup}.

 Let~$(\Psi, \nnu) \in \mathcal{S}(\widehat{\Psi}_{0})$ be such that~$\E^{\widehat{\Psi}_{0}}(\Psi, \nnu) <+\infty$, and let $w \in L^{1}_{h_{\Psi} \Psi}([0,T]\times Y; K)$ be such that $\nnu= w h_{\Psi} \Psi$ and $\Phi_{\rm min}(\Psi, \nnu) = \Phi(w, \Psi, \nnu)$. In particular, we may assume that $w = 0$ on the set~$\{(t, y) \in [0,T] \times \overline{Y}: \, h_{\Psi_{t}} (y) = 0 \}$. Indeed, we notice  that the function~$\overline{w} (t, y)\coloneq w (t, y)\mathbf{1}_{ \{h_{\Psi} \neq 0\}}(t, y)$ still belongs to~$L^{1}_{h_{\Psi}\Psi} ([0,T]\times Y; K)$ and satisfies $\nnu = \overline{w} h_{\Psi} \Psi$ and, $\Phi(\overline{w},  \Psi, \nnu) = \Phi(w, \Psi, \nnu)$, by the minimality of $w$.

As in~\cite[Theorem~3.2]{Flos}, the construction of a recovery sequence is based on the superposition principle~\cite[Theorem~5.2]{AmbForMorSav18}. The curve~$\Psi \in AC([0,T]; (\sP_{1}(Y); W_{1}))$ solves indeed the continuity equation
\begin{align}
\label{e:conteq}
\partial_{t} \Psi_{t} + \dive ( b(t, \cdot) \Psi_{t}) = 0 \qquad \text{with $\Psi_{0}= \widehat{\Psi}_{0}$}\,,
\end{align}
where the velocity field~$b \colon [0,T]\times \overline{Y} \to \overline{Y}$ is defined by 
\begin{align}\label{e:flux}
b(t, y) \coloneq b_{\Psi_{t}}(y) + \left( \begin{array}{cc}
h_{\Psi_{t}} (y)w(t, y) \\
0
\end{array} \right) \qquad \text{for $y \in Y$}
\end{align} 
and is extended to $0$ in~$\overline{Y} \setminus Y$. By Proposition~\ref{p:bound-support}, there exists $R>0$ such that $\spt (\Psi_{t}) \subseteq \B^{Y}_{R}$ for every $t \in [0, T]$. Thus, by~$(v_{1})$,~$(\T_{1})$, and~$(h_{1})$, and by the fact that $w(t, y) \in K$, we can estimate
\begin{equation}
\label{e:limsup1}
\begin{split}
\int_{0}^{T} \int_{\overline{Y}} \,\| b(t, y)\|_{\overline{Y}} \, \di \Psi_{t}(y) \, \di t  \leq & \ (M_{v} + M_{\T}) \int_{0}^{T} \int_{\overline{Y}} (1 + \| y \|_{\overline{Y}} + m_{1}(\Psi_{t}) ) \, \di \Psi_{t}(y) \, \di t 
\\
&
 + \int_{0}^{T} \int_{\overline{Y}} |h_{\Psi_{t}} (y)w(t, y)  |\, \di \Psi_{t}(y) \, \di t < +\infty\,. 
\end{split}
\end{equation}
We are therefore in a position to apply~\cite[Theorem~5.2]{AmbForMorSav18} with velocity field~$b$. Setting
\begin{align*}
& \Gamma \coloneq C([0,T]; \overline{Y})\,,\\
& \Delta \coloneq \bigg\{ y \in \Gamma: \, \text{$y (t) \in Y$ for $t \in [0,T]$ and solves}
\\
& \hphantom{------}\,\,\, \,\text{$\dot{y}(t) = b_{\Psi_{t}}(y(t)) + \left( \begin{array}{cc}
h_{\Psi_{t}} (y(t)) w(t, y(t)) \\
0
\end{array}\right)$ in $[0,T]$ with $y(0) \in \spt(\widehat{\Psi}_{0})$} \bigg\}\,,
\end{align*} 
we infer that there exists a probability measure~$\pi \in \sP(\Gamma)$ concentrated on~$\Delta$ such that for every $t \in [0,T]$~$\Psi_{t} = (\ev_{t})_{\#}\pi$, where $\ev_{t}\colon \Gamma \to \overline{Y}$ denotes the evaluation map defined as $\ev_{t}(y) \coloneq y(t)$ for every $y \in \Gamma$.

We define the auxiliary functional
\begin{equation}
\label{e:limsup2}
\F( y) \coloneq \fint_{0}^{T} \phi(w(t, y(t))) \, \di t \qquad \text{for every $y \in \Delta$}.
\end{equation}
We notice that by Fubini Theorem
\begin{align*}
\int_{\Gamma} \F(y) \, \di \pi(y) & = \int_{\Gamma} \fint_{0}^{T} \phi(w(t, y(t))) \, \di t \, \di \pi(y) =  \fint_{0}^{T}\int_{\Gamma} \phi(w(t, \ev_{t}(y))) \, \di \pi(y)\, \di t 
\\
&
=  \fint_{0}^{T}\int_{Y}  \phi(w(t,y) ) \, \di \Psi_{t}(y) \, \di t = \Phi_{\rm min} (\Psi, \nnu)\,.
\end{align*}
Furthermore,~$\F$ is lower semicontinuous in~$\Delta$. Indeed, if~$y_{k}, y \in \Delta$ are such that $y_{k} \to y$ with respect to the uniform convergence in~$\Gamma$, since~$w$ takes values in the compact set~$K$ we immediately deduce that~$w(\cdot, y_{k}(\cdot))$ is bounded in~$L^{\infty}([0,T]; \R^{d})$, and therefore converges weakly$^*$, up to a subsequence, to some $g \in L^{\infty}([0,T]; \R^{d})$ and, by convexity of~$\phi$,
\begin{displaymath}
\fint_{0}^{T} \phi(g(t)) \, \di t \leq \liminf_{k\to \infty}  \, \F(y_{k}) \,.
\end{displaymath}
Since~$y_{k} \in \Delta$ for every $k$, for $s<t \in [0,T]$ we can write
\begin{displaymath}
y_{k}(t) - y_{k}(s) = \int_{s}^{t} \left( b_{\Psi_{\tau}} (y_{k}(\tau)) + \left( \begin{array}{cc}
h_{\Psi_{\tau}} (y_{k}(\tau)) w(\tau, y_{k}(\tau)) ) \\
0
\end{array} \right)\right) \di \tau\,.
\end{displaymath}
Passing to the limit in the previous equality we deduce, thanks to~$(v_{2})$, $(\T_{2})$, and~$(h_{2})$,
\begin{displaymath}
y(t) - y(s) = \int_{s}^{t} \left(b_{\Psi_{\tau}} (y(\tau)) +  \left( \begin{array}{cc}
h_{\Psi_{\tau}} (y(\tau)) g(\tau)  \\
0 \end{array}\right)\right) \di \tau\,.
\end{displaymath}
On the other hand, being~$y \in \Delta$ we have that
\begin{displaymath}
y(t) - y(s) = \int_{s}^{t} \left(b_{\Psi_{\tau}} (y(\tau)) +  \left( \begin{array}{cc}
h_{\Psi_{\tau}} (y(\tau)) w(\tau, y(\tau)) \\
0
\end{array} \right)\right)  \di \tau\,,
\end{displaymath}
which implies, by the arbitrariness of~$s$ and~$t$, that $h_{\Psi_{\tau}} (y(\tau))  g(t) = h_{\Psi_{\tau}} (y(\tau))  w(t, y(t))$ for a.e.~$t \in [0,T]$. Hence, $g(t) = w(t, y(t))$ for a.e.~$t \in \{ s \in [0,T] : \, h_{\Psi_{s}} (y(s))  \neq 0\}$, while $w(t, y(t)) = 0$ for $t \in \{ s \in [0,T]: \, h_{\Psi_{s}} (y(s))  = 0 \}$. Since~$\phi \geq 0$ and $\phi (0) = 0$, we finally obtain
\begin{displaymath}
\F(y) \leq \fint_{0}^{T} \phi(g(t)) \, \di t \leq \liminf_{k\to \infty} \F(y_{k}) \,.
\end{displaymath}

By Lusin theorem, we can select an increasing sequence of compact sets $\Delta_{k} \Subset \Delta_{k+1} \Subset \Delta$ such that $\pi( \Delta \setminus \Delta_{k}) <\frac{1}{k}$ and $\F$ is continuous on~$\Delta_{k}$. Setting
\begin{displaymath}
\overline{\pi}_{k} \coloneq \frac{1}{\pi(\Delta_{k}) } \, \pi\lfloor \Delta_{k} \in \sP(\Gamma)\,,
\end{displaymath}
we have that 
\begin{equation}
\label{e:limsup3}
\lim_{k\to\infty} \, W_{1}(\pi, \overline{\pi}_{k}) = 0\,, \qquad  \lim_{k\to\infty} \int_{\Gamma} \F(y) \, \di \overline{\pi}_{k}(y) = \int_{\Gamma} \F(y) \, \di \pi(y)\,.
\end{equation}

Let us fix a countable dense set $D \coloneq \{\varphi_{\ell} \}_{\ell \in \mathbb{N}}$ in~$C_{c}([0,T]\times \overline{Y}; \R^{d})$. Since~$\Delta_{k}$ is compact, we can select a sequence of curves $\{(y_k)_{i}^{m}: \, i =1, \ldots, m, \, m \in \mathbb{N}\} \subseteq \Delta_{k} $ such that for every $k$ the measures
\begin{displaymath}
\overline{\pi}_{k}^{m} \coloneq \frac{1}{m} \sum_{i=1}^{m} \delta_{(y_k)_{i}^{m}} \in \sP(\Gamma)
\end{displaymath}
satisfy
\begin{equation}
\label{e:limsup4}
\lim_{m\to\infty} \, W_{1}( \overline{\pi}_{k}^{m}, \overline{\pi}_{k}) = 0\,, \qquad \lim_{m\to\infty} \int_{\Gamma} \F(y) \, \di \overline{\pi}_{k}^{m}(y)=  \int_{\Gamma} \F(y) \, \di \overline{\pi}_{k}(y)\,,
\end{equation}
where the second equality is due to the fact that~$\F$ is continuous and bounded on~$\Delta_{k}$.

We recall that, by construction, on the set~$\Delta_{k}$ the function $y \mapsto \F(y)$ is continuous. Since~$\phi$ is superlinear, this implies that $w(\cdot, \gamma_{j} (\cdot) ) \to w(\cdot, \gamma(\cdot))$ in $L^{p}([0,T]; \R^{d})$ for every $p<+\infty$ whenever $\gamma_{j}, \gamma \in \Delta_{k}$ with $\gamma_{j} \to \gamma$. Hence, also the map
\begin{displaymath}
y \mapsto  \int_{0}^{T}  \varphi_{\ell} (t, y(t)) w(t, y(t)) h_{\Psi_{t}} (y(t)) \, \di t
\end{displaymath}
is continuous in~$\Delta_{k}$ for every $\ell \in \mathbb{N}$. Combining this fact with~\eqref{e:limsup3} and~\eqref{e:limsup4}, we are able to select a suitable strictly increasing sequence~$m(k)$ such that for every $m \geq m(k)$ it holds
\begin{align}
\label{e:limsup4.1}
&W_{1}(\overline{\pi}_{k}^{m}, \overline{\pi}_{k}) <\frac{1}{k} \,,\\
&
\label{e:limsup4.2} \bigg| \int_{\Gamma} \F(y) \, \di \overline{\pi}_{k}^{m} - \int_{\Gamma} \F(y) \, \di \overline{\pi}_{k} (y) \bigg| <\frac{1}{k}\,,
\\
&
\label{e:limsup4.3} \bigg| \int_{\overline{Y}} \int_{0}^{T}  \varphi_{\ell} (t, y(t)) w(t, y(t)) h_{\Psi_{t}} (y(t)) \, \di t \, \di (\overline{\pi}_{k}^{m} - \overline{\pi}_{k} ) (y) \bigg| \leq \frac{1}{k} \qquad \text{for $\ell \leq k$}\,,
\end{align}
where in the last inequality we have used that $\overline{\pi}_{k}^{m}$ converges narrowly to $\overline{\pi}_{k}$ as $m\to\infty$ and that~$\overline{\pi}_{k}^{m}$ is concentrated on curves belonging to~$\Delta_{k}$.

Therefore we set $\pi_{N} \coloneq \overline{\pi}_{k}^{N}$ for $m(k) \leq N < m(k+1)$ and obtain that 
\begin{equation}\label{101}
\lim_{N\to\infty} W_1(\pi_{N},\pi)=0,
\end{equation}
so that
\begin{equation}
\label{e:limsup5}
\lim_{N\to \infty} \int_{\Gamma} \F(y) \, \di \pi_{N}(y) = \Phi_{\rm min} (\Psi, \nnu)\,.
\end{equation}

We now construct the recovery sequence~$(\y^{N}, \uu^{N})$. First, we define the auxiliary curves~$\Lambda_{t}^{N} \coloneq (\ev_{t})_{\#} \pi_{N} \in AC([0,T]; (\sP_{1}(Y); W_{1}))$ and the corresponding curves~$\z^{N} = (z_{1}, \ldots, z_{N}) \in AC([0,T]; Y^{N})$ so that $\Lambda_{t}^{N}=\frac{1}{N}\sum_{i=1}^{N} \delta_{z_{i}(t)}$. Then, we set  $\overline{u}_{i} (t) \coloneq w(t, z_{i}(t))$ for every $t \in [0,T]$, every $i=1, \ldots, N$, and every $N \in \mathbb{N}$, and $\overline{\uu}^{N} \coloneqq (\overline{u}_{1}, \ldots, \overline{u}_{N}) \in L^{1}([0, T]; K^{N})$. In particular, each component of~$\z^{N}$ solves the ODE
\begin{equation}
\label{e:limsup6}
\dot{z}_{i} (t) = b_{\Psi_{t}} (z_{i}(t)) + \left( \begin{array}{cc}
 h_{\Psi_{t}} (z_{i}(t)) \overline{u}_{i}(t) \\
 0
 \end{array}\right)
\end{equation}
with initial point~$z_{i}(0) \in \spt( \widehat{\Psi}_{0})$. The curves~$\z^{N}$ have to be further modified, since in the ODE~\eqref{e:limsup6} the velocity field~$b_{\Psi_{t}}$ still contains the state of the limit system~$\Psi_{t}$ rather than~$\Lambda^{N}$, and the initial data~$\z^{N}_{0} = (z_{1} (0), \ldots, z_{N}(0))$ do not coincide with~$\y^{N}_{0}$.

Being~$\Psi^{N}_{0}$ and $\Lambda^{N}_{0}$ two empirical measures, we can find a sequence of permutations~$\sigma^{N} \colon Y^{N} \to Y^{N}$ such that
\begin{equation}
\label{e:permutation}
W_{1}(\Psi^{N}_{0}, \Lambda^{N}_{0}) = \frac{1}{N} \sum_{i=1}^{N} \| \big( \sigma^{N}(\y^{N}_{0})\big)_{i} - z_{i}(0)\|\,.
\end{equation}
Let us further denote by~$\sigma^{N}_{\R^{d}}\colon (\R^{d})^{N} \to (\R^{d})^{N}$ the spatial component of~$\sigma^{N}$. We set $\overline{\y}^{N}_{0} \coloneqq \sigma^{N} (\y^{N}_{0})$ and denote by $\overline{y}_{0, i}$ its $i$-th component. We define $\overline{\y}^{N} = (\overline{y}_{ 1}, \ldots, \overline{y}_{ N}) \in AC([0,T]; Y^{N})$ by solving for $i=1, \ldots, N$ the Cauchy problems
\begin{equation}
\label{e:limsup7}
\left\{ \begin{array}{ll}
\dot{\overline{y}}_{i} (t) = b_{\Psi^{N}_{t}} (\overline{y}_{i}(t)) +  \left( \begin{array}{cc}
h_{\Psi^{N}_{t}} (\overline{y}_{i}(t)) \overline{u}_{i}(t) \\
0
\end{array}\right)\,,\\[2mm]
\overline{y}_{i}(0) = \overline{y}_{0, i}\,,
\end{array}\right.
\end{equation}
where, as for the Cauchy problem in~\eqref{e:Cauchy}, we have set $\Psi^{N}_{t} \coloneq \frac{1}{N} \sum_{i=1}^{N} \delta_{\overline{y}_{i}(t)} \in \sP^N (Y)$.
By~\cite[Corollary~2.3]{MorSol19}  system~\eqref{e:limsup7} admits a unique solution and $(\overline{\y}^{N}, \overline{\uu}^{N})  \in \mathcal{S}(\overline{\y}^{N}_{0})$. Finally, we set $(\y^{N}, \uu^{N}) \coloneqq ( (\sigma^{N})^{-1}( \overline{\y}^{N}), (\sigma^{N}_{\R^{d}})^{-1} (\overline{\uu}^{N})) \in \mathcal{S}(\y^{N}_{0})$.

We denote by~$(\Psi^{N}, \nnu^{N})$ and $(\Lambda^{N}, \eeta^{N})$ the pairs generated by~$(\overline{\y}^{N}, \overline{\uu}^{N})$ and by~$(\z^{N}, \overline{\uu}^{N})$, respectively, and notice that, by invariance with respect to permutations, $(\Psi^{N}, \nnu^{N})$ coincides with the pair generated by~$(\y^{N}, \uu^{N})$. We want to show that 
\begin{equation}\label{100}
\text{$\Psi^{N} \to \Psi$ in~$C([0,T]; (\sP_{1}(Y); W_{1}))$ and~$\nnu^{N} \rightharpoonup \nnu$ weakly$^*$ in~$\mathcal{M}( [0,T] \times  \overline{Y}; \R^{d})$.}
\end{equation}
To do this, we will prove that 
\begin{equation}\label{4.33}
\lim_{N\to\infty} \sup_{t\in[0,T]} W_1(\Psi^{N}_{t},\Lambda^{N}_{t})=0\quad\text{and}\quad \lim_{N\to\infty} \sup_{t\in[0,T]} W_1(\Lambda^{N}_{t},\Psi_t)=0
\end{equation}
and that
\begin{equation}\label{4.35}
\nnu^{N}-\eeta^{N} \rightharpoonup 0 \quad\text{and} \quad \eeta^N \rightharpoonup \nnu \quad \text{weakly$^*$ in~$\mathcal{M}([0,T]\times  \overline{Y}; \R^{d})$,}
\end{equation}
so that \eqref{100} follows by triangle inequality.

Let us consider the pair~$(\Lambda^{N}, \eeta^{N})$. Since~$z_{i}(0)\in\spt( \widehat{\Psi}_{0})$ for every $i=1,\ldots,N$ and~$\widehat{\Psi}_{0} \in \sP_{c}(Y)$, Proposition~\ref{p:bound-N-particles} yields the existence of~$R>0$ independent of~$N$ and~$t$ such that~$\spt (\Lambda^{N}_{t}) \subseteq \B^{Y}_{R}$ for every $t\in[0,T]$. Repeating the computations performed in~\eqref{e:estimate-W1} we obtain that~$\Lambda^{N}$ is equi-Lipschitz continuous with respect to~$t$.
The convergence in \eqref{101} implies that $W_1(\Lambda^{N}_{t},\Psi_{t})\to0$ for every $t\in[0,T]$ as $N\to\infty$, so that and application of Ascoli-Arzel\`a Theorem yields that  $\Lambda^{N} \to \Psi$ in~$C([0,T]; (\sP_{1}(Y); W_{1}))$. This proves the second convergence in \eqref{4.33}. 

To prove the first convergence in \eqref{4.33}, we estimate the distance between~$\overline{\y}^{N}$ and~$\z^{N}$. First we notice that, up to possibly taking a larger~$R$, we have that~$\| \overline{y}_{i}(t)\|_{\overline{Y}} \leq R$ for every~$i=1,\ldots,N$ for every $N\in\N$ and for every~$t \in [0,T]$, so that $\spt(\Psi^{N}_{t}) \subseteq \B^{Y}_{R}$. For every $t \in [0,T]$ and every $i=1, \ldots, N$ we have, by definition of~$\overline{y}_{i}$ and~$z_{i}$ and by assumptions~$(v_{2})$, $(\T_{2})$, and~$(h_{2})$,
\begin{equation}
\label{e:limsup8}
\begin{split}
\| z_{i}(t) - \overline{y}_{i}(t)\|_{\overline{Y}}  \leq & \| z_{i}(0) -  \overline{y}_{0, i} \| +  \int_{0}^{t} \| b_{\Psi_{\tau}} (z_{i}(\tau)) - b_{\Psi^{N}_{\tau}}(\overline{y}_{i} (\tau) ) \|_{\overline{Y}} \, \di \tau \\
& + \int_{0}^{t} | h_{\Psi_{\tau}} (z_{i}(\tau)) - h_{\Psi^{N}_{\tau}} (\overline{y}_{i}(\tau))| \, | w(\tau, z_{i}(\tau))| \, \di \tau
\\
\leq &\,
 \| z_{i}(0) -  \overline{y}_{0, i} \| + L_{R} \int_{0}^{t} \big( \| z_{i}(\tau) - \overline{y}_{i}(\tau) \|_{\overline{Y}} + W_{1}( \Psi_{\tau}, \Psi^{N}_{\tau}) \big) \, \di \tau \,,
\end{split}
\end{equation}
for some positive constant~$L_{R}$ independent of~$N$. Hence, by Gr\"onwall inequality we deduce from~\eqref{e:limsup8} that
\begin{equation}
\label{e:conv-min-1}
\| \overline{y}_{i}(t) - z_{i}(t)\|_{\overline{Y}} \leq  e^{L_{R} T} \Big( \| z_{i}(0) - \overline{y}_{0, i}\|_{\overline{Y}} + L_{R} \int_{0}^{t} W_{1} ( \Psi^{N}_{\tau} , \Lambda^{N}_{\tau})  \, \di \tau \Big).  
\end{equation}
Summing~\eqref{e:conv-min-1} over $i=1, \ldots, N$ and recalling~\eqref{e:permutation}, we infer that for every $t \in [0, T]$
\begin{equation}
\label{e:limsup8-1}
W_{1}(\Psi^{N}_{t} , \Lambda^{N}_{t}) \leq \frac{1}{N} \sum_{i=1}^{N} \| z_{i}(t) - \overline{y}_{i}(t) \| \leq   e^{L_{R}T} W_{1}( \Psi^{N}_{0}, \Lambda^{N}_{0}) +  L_{R}e^{L_{R}T}  \int_{0}^{t} W_{1} ( \Psi^{N}_{\tau} , \Lambda^{N}_{\tau})  \, \di \tau \,.
\end{equation}
Applying once again Gr\"onwall inequality to~\eqref{e:limsup8-1} we obtain for every $t \in [0, T]$
\begin{equation}
\label{e:conv-min-3}
W_{1}(\Psi^{N}_{t}, \Lambda^{N}_{t}) \leq e^{L_{R} T(1+ e^{L_{R}T})}\, W_{1} (\Psi^{N}_{0}, \Lambda^{N}_{0}) \,.
\end{equation}
Since~$W_{1}( \Lambda^{N}_{0}, \widehat{\Psi}_{0}) \to 0$ and~$W_{1}(\Psi^{N}_{0}, \widehat{\Psi}_{0}) \to 0$, from~\eqref{e:conv-min-3} we conclude~\eqref{4.33} and the convergence of~$\Psi^{N}$ to~$\Psi$ in~$C([0, T]; (\sP_{1}(Y); W_{1}))$.

We now turn our attention to~\eqref{4.35}. The second convergence in \eqref{4.35} is a matter of a direct computation.
Indeed, for every $\varphi \in C_{c}([0,T] \times \overline Y; \R^{d})$ and every $\varepsilon >0$ we can fix $\varphi_{\ell} \in D$ such that $\| \varphi - \varphi_{\ell}\|_{C([0,T]\times \overline{Y})} \leq \varepsilon$ and estimate
\begin{equation}\label{e:limsup7.1}
\begin{split}
& \bigg| \int_{0}^{T} \int_{\overline{Y}}  \varphi(t, y) \, \di (\eeta^{N} - \nnu) (t, y) \bigg| 
\\
&
 \leq  \int_{0}^{T} \int_{\overline{Y}}| \varphi(t, y) - \varphi_{\ell}(t, y) |  \, \di |\eeta^{N} - \nnu| (t, y)  + \bigg| \int_{0}^{T} \int_{\overline{Y}} \varphi_{\ell}(t, y) \, \di (\eeta^{N} - \nnu) (t, y) \bigg| 
\\
&
\leq C \varepsilon + \bigg|  \frac{1}{N} \sum_{i=1}^{N} \int_{0}^{T} \varphi_{\ell}(t, z_{i}(t)) w(t, z_{i}(t)) h_{\Psi_{t}}(z_{i}(t))  \, \di t 
\\
& \qquad\qquad 
- \int_{0}^{T}\int_{\overline{Y}} \varphi_{\ell}(t, y) w(t, y)  h_{\Psi_{t}} (y) \, \di \Psi_{t}(y) \, \di t \bigg| 
\\
&
= C \varepsilon + \bigg|  \int_{0}^{T} \int_{\overline{Y}} \varphi_{\ell} (t, y) w(t, y) h_{\Psi_{t}}(y) \, \di \Lambda^{N}_{t}(y) \,\di t 
\\
& \qquad\qquad 
- \int_{0}^{T}\int_{\overline{Y}} \varphi_{\ell}(t, y) w(t, y) h_{\Psi_{t}}(y)  \, \di \Psi_{t}(y) \, \di t \bigg| 
\\
&
= C \varepsilon + \bigg| \int_{\Gamma} \int_{0}^{T} \varphi_{\ell}(t, y(t)) w(t, y(t)) h_{\Psi_{t}} (y(t)) \, \di t \, \di \pi_{N}(y) 
\\
&
\qquad\qquad- \int_{\Gamma} \int_{0}^{T} \varphi_{\ell}(t, y(t)) w(t, y(t)) h_{\Psi_{t}} (y(t)) \, \di t \, \di \pi (y)  \bigg| \,,
\end{split}
\end{equation}
for some positive constant~$C$ independent of~$\varepsilon$. We now estimate the right-hand side of~\eqref{e:limsup7.1}. By definition of~$\pi_{N}$ and by~\eqref{e:limsup4.1} and~\eqref{e:limsup4.3}, for every $N \in [m(k), m(k+1))$ with $k \geq \ell$ we have that
\begin{align*}
& \bigg| \int_{\Gamma} \int_{0}^{T}  \varphi_{\ell}(t, y(t)) w(t, y(t)) h_{\Psi_{t}} (y(t)) \, \di t \, \di \pi_{N}(y) 
\\
&
\qquad - \int_{\Gamma} \int_{0}^{T} \varphi_{\ell}(t, y(t)) w(t, y(t)) h_{\Psi_{t}} (y(t))  \, \di t \, \di \pi (y)  \bigg| 
\\
\leq &
\bigg| \int_{\Gamma} \int_{0}^{T} \varphi_{\ell}(t, y(t)) w(t, y(t)) h_{\Psi_{t}} (y(t)) \, \di t \, \di \overline{\pi}_{k}^{N}(y) 
\\
& 
\qquad - \int_{\Gamma} \int_{0}^{T} \varphi_{\ell}(t, y(t)) w(t, y(t)) h_{\Psi_{t}} (y(t)) \, \di t \, \di \overline\pi_{k} (y)  \bigg| 
\\
&
\quad + \bigg| \int_{\Gamma} \int_{0}^{T} \varphi_{\ell}(t, y(t)) w(t, y(t)) h_{\Psi_{t}} (y(t)) \, \di t \, \di \overline{\pi}_{k}(y) 
\\
&
\qquad - \int_{\Gamma} \int_{0}^{T} \varphi_{\ell}(t, y(t)) w(t, y(t)) h_{\Psi_{t}} (y(t)) \, \di t \, \di \pi (y)  \bigg| 
\\
\leq &
\frac{1}{k} + \bigg| \int_{\Gamma} \int_{0}^{T} \varphi_{\ell}(t, y(t)) w(t, y(t)) h_{\Psi_{t}} (y(t)) \, \di t \, \di \overline{\pi}_{k}(y) 
\\
&
\qquad - \int_{\Gamma} \int_{0}^{T} \varphi_{\ell}(t, y(t)) w(t, y(t)) h_{\Psi_{t}} (y(t))  \, \di t \, \di \pi (y)  \bigg| 
\\
= &
\frac{1}{k} + \bigg| \frac{1}{\pi(\Delta_{k})} \int_{\Delta_{k}} \int_{0}^{T} \varphi_{\ell}(t, y(t)) w(t, y(t)) h_{\Psi_{t}} (y(t))  \, \di t \, \di \pi (y) \\
&
\qquad -  \int_{\Gamma} \int_{0}^{T} \varphi_{\ell}(t, y(t)) w(t, y(t)) h_{\Psi_{t}} (y(t))  \, \di t \, \di \pi (y) \bigg|\,.
\end{align*}
Passing to the limit as~$N\to\infty$ in the previous inequality we get by the boundedness of~$w$, $\varphi_{\ell}$, and~$h$, that
\begin{displaymath}
\lim_{N\to\infty}\! \int_{\Gamma} \! \int_{0}^{T} \!\!  \varphi_{\ell}(t, y(t)) w(t, y(t)) h_{\Psi_{t}} (y(t)) \, \di t \, \di \pi_{N}(y) =\! \int_{\Gamma} \!  \int_{0}^{T}  \!\! \varphi_{\ell}(t, y(t)) w(t, y(t)) h_{\Psi_{t}} (y(t)) \, \di t \, \di \pi(y)\,. 
\end{displaymath}
Therefore, passing to the limsup as~$N\to\infty$ in~\eqref{e:limsup7.1} we obtain
\begin{align*}
\limsup_{N\to\infty}\bigg| \int_{0}^{T} \int_{\overline{Y}} &  \varphi(t, y) \, \di (\eeta^{N} - \nnu) (t, y) \bigg| \leq C\varepsilon\,.
\end{align*}
By the arbitrariness of~$\varepsilon$ and~$\varphi$ we infer that~$\eeta^{N} \rightharpoonup \nnu$ weakly$^*$ in $\mathcal{M}([0,T]\times  \overline{Y}; \R^{d})$.


To prove the first convergence in \eqref{4.35}, we need to estimate, for every $\varphi \in C_{c}([0,T]\times  \overline{Y}; \R^{d})$, and using the definition of~$\nnu^{N}$, of~$\eeta^{N}$, and of the controls~$\uu^{N}$,
\begin{equation}
\label{e:limsup131}
\begin{split}
&\, \bigg|\int_{0}^{T}  \int_{ \overline{Y}} \varphi (t, y) \, \di \nnu^{N}(t, y) - \int_{0}^{T} \int_{ \overline{Y}} \varphi(t, y) \, \di \eeta^{N}(t, y) \bigg|
\\
= &\,  \bigg| \frac{1}{N} \sum_{i=1}^{N} \int_{0}^{T} \big( \varphi(t, y_{i}(t)) h_{\Psi^{N}_{t}} (y_{i}(t)) - \varphi(t, z_{i}(t)) h_{\Psi_{t}} (z_{i}(t)) \big) \, w(t, z_{i}(t)) \, \di t \bigg|
\\
\leq &\,
  \frac{1}{N} \sum_{i=1}^{N} \int_{0}^{T} \big| \varphi(t, y_{i}(t)) - \varphi(t, z_{i}(t)) \big| \cdot  \big| h_{\Psi^{N}_{t}} (y_{i}(t)) w(t, z_{i}(t)) \big| \, \di t 
\\
&
\, +  \frac{1}{N} \sum_{i=1}^{N} \int_{0}^{T} \big| h_{\Psi^{N}_{t}} (y_{i}(t)) - h_{\Psi_{t}} (z_{i}(t)) \big| \cdot \big|  \varphi(t, z_{i}(t)) w(t, z_{i}(t)) \big| \, \di t \,.
\end{split}
\end{equation}
In order to continue in~\eqref{e:limsup131} let us fix a modulus of continuity~$\omega_{\varphi}$ for the function $\varphi$. Notice that, without loss of generality, we may assume~$\omega_{\varphi}$ to be increasing and concave. Thus, by~$(h_{1})$, $(h_{2})$, by the fact that~$w(t, z_{i}(t)) \in K$ and $\overline{y}_{i}, z_{i} \in \B^{Y}_{R}$ for every~$t \in [0,T]$ and every $i=1, \ldots, N$, and by the inequalities~\eqref{e:limsup8-1}, \eqref{e:conv-min-3}, we can further estimate~\eqref{e:limsup131} with
\begin{equation*}
\begin{split}
 \bigg|& \int_{0}^{T}  \int_{ \overline{Y}} \varphi (t, y) \, \di \nnu^{N}(t, y) - \int_{0}^{T} \int_{ \overline{Y}} \varphi(t, y) \, \di \eeta^{N}(t, y) \bigg|
\\
& \leq   \frac{C}{N} \sum_{i=1}^{N} \int_{0}^{T} \! \big[ \omega_{\varphi} ( \| z_{i}(t) - \overline{y}_{i}(t) \|_{\overline{Y}} ) 
+ 
\| \varphi\|_{C([0,T] \times \overline{Y})} 
\big(\| z_{i}(t) - \overline{y}_{i}(t) \|_{\overline{Y}} + W_{1}(\Psi_{t}, \Psi^{N}_{t}) \big)\big]\, \di t 
\\
&
\leq C \int_{0}^{T} \omega_{\varphi} \bigg( \frac{1}{N} \sum_{i=1}^{N} \| z_{i}(t) - \overline{y}_{i}(t) \|_{\overline{Y}}\bigg) \di t 
\\
&
\qquad + C \| \varphi\|_{C([0,T] \times \overline{Y})} \bigg(e^{L_{R}T} T W_{1}( \Psi^{N}_{0}, \Lambda^{N}_{0}) + ( 1 +  L_{R} T e^{L_{R}T})  \int_{0}^{T} W_{1} ( \Psi^{N}_{t} , \Lambda^{N}_{t})  \, \di t \bigg) 
\\
&
\leq  C T \omega_{\varphi} \bigg( e^{L_{R}T} W_{1}( \Psi^{N}_{0}, \Lambda^{N}_{0}) +  L_{R}e^{L_{R}T}  \int_{0}^{T} W_{1} ( \Psi^{N}_{t} , \Lambda^{N}_{t})  \, \di t \bigg)
\\
&
\qquad + C \| \varphi\|_{C([0,T] \times \overline{Y})} \bigg(e^{L_{R}T} T W_{1}( \Psi^{N}_{0}, \Lambda^{N}_{0}) + ( 1 +  L_{R} T e^{L_{R}T})  \int_{0}^{T} W_{1} ( \Psi^{N}_{t} , \Lambda^{N}_{t})  \, \di t \bigg) \,,
\end{split}
\end{equation*}
where $C>0$ is a constant independent of~$N$. Therefore, by~\eqref{4.33} we conclude that
\begin{displaymath}
\lim_{N \to \infty}  \bigg| \int_{0}^{T}  \int_{ \overline{Y}} \varphi (t, y) \, \di \nnu^{N}(t, y) - \int_{0}^{T} \int_{ \overline{Y}} \varphi(t, y) \, \di \eeta^{N}(t, y) \bigg| = 0\,,
\end{displaymath}
which yields the first convergence in~\eqref{4.35}. 

Finally, we prove \eqref{gamma_limit}. As already observed, $(\y^{N}, \uu^{N}) \in \mathcal{S}(\y^{N}_{0})$ by construction, so that
\begin{equation}
\label{e:limsup13}
\E^{\y^{N}_{0}}_{N} ( \y^{N}, \uu^{N}) = \frac{1}{N} \sum_{i=1}^{N} \fint_{0}^{T} \cL_{N} ( y_{i}(t) , \Psi^{N}_{t}) \, \di t + \frac{1}{N} \sum_{i=1}^{N} \fint_{0}^{T} \phi(u_{i}(t)) \, \di t \,.
\end{equation}
Since $\spt (\Psi^{N}_{t}) , \, \spt( \Psi_{t}) \subseteq \B^{Y}_{R}$ for every $t \in [0,T]$ and, by~$(\cL_{1})$ and~$(\cL_{2})$,~$\cL$ is continuous and~$\cL_{N}$ $\sP_{1}$-converges to~$\cL$ uniformly on compact sets, we have that
\begin{equation}
\label{e:limsup14}
\begin{split}
\lim_{N \to \infty}  \frac{1}{N} \sum_{i=1}^{N} \fint_{0}^{T} \cL_{N} ( y_{i}(t) , \Psi^{N}_{t}) \, \di t & = \lim_{N \to \infty}  \fint_{0}^{T} \int_{Y} \cL_{N} ( y , \Psi^{N}_{t}) \, \di \Psi^{N}_{t} \,  \di t 
\\
&
= \lim_{N \to \infty} \fint_{0}^{T} \int_{\B^{Y}_{R}} \cL_{N}( y, \Psi^{N}_{t}) \, \di \Psi^{N}_{t}\, \di t
\\
&
 =   \fint_{0}^{T} \int_{\B^{Y}_{R}} \cL ( y , \Psi_{t}) \, \di \Psi_{t} \,  \di t=  \fint_{0}^{T} \int_{Y} \cL ( y , \Psi_{t}) \, \di \Psi_{t} \,  \di t  \,. 
\end{split}
\end{equation}
As for the second term on the right-hand side of~\eqref{e:limsup13}, we recall that~$\uu^{N} = (\sigma^{N}_{\R^{d}})^{-1}(\overline{\uu}^{N})$ with $\overline{u}_{i}(t) = w(t, z_{i}(t))$ and that $\Lambda^{N}_{t} = ( \ev_{t})_{\#}\pi_{N}$, so that we can write
\begin{align*}
 \frac{1}{N} \sum_{i=1}^{N} \fint_{0}^{T} \phi(u_{i}(t)) \, \di t & = \frac{1}{N} \sum_{i=1}^{N} \fint_{0}^{T} \phi(\overline{u}_{i}(t)) \, \di t =  \fint_{0}^{T} \int_{Y} \phi(w(t, y)) \, \di \Lambda^{N}_{t} (y) \, \di t 
 \\
 &
 = \int_{\Gamma}  \fint_{0}^{T} \phi(w(t, y(t))) \, \di t \,  \di \pi_{N} (y) = \int_{\Gamma} \F(y) \, \di \pi_{N}(y) \,. \nonumber
\end{align*}
In view of~\eqref{e:limsup5}, we infer that
\begin{displaymath}
\lim_{N \to \infty}  \frac{1}{N} \sum_{i=1}^{N} \fint_{0}^{T} \phi(u_{i}(t)) \, \di t = \Phi_{\rm min} (  \Psi, \nnu)\,,
\end{displaymath}
which implies, together with~\eqref{e:limsup14}, that
\begin{displaymath}
\lim_{N \to \infty} \, \E^{\y^{N}_{0}}_{N} ( \y^{N}, \uu^{N}) = \E^{\widehat{\Psi}_{0}} ( \Psi, \nnu) \,,
\end{displaymath}
which is \eqref{gamma_limit}.
This concludes the proof of the theorem.
\end{proof}


\section{Numerical experiments}\label{s:appl}
In this section we consider specific applications of our model in the context of opinion dynamics. In Section~\ref{sec:LF}, we discuss the effects of controlling a single population of leaders. 
In Section~\ref{sec:2LG}, instead, two competing populations of leaders and a residual population of followers are considered, 
but the policy maker favors only one of the populations of leaders towards their goal.

In both cases, for the continuity equation \eqref{e:conteq-2-intro} we use a finite volume scheme with dimensional splitting for the state space discretization, following a similar approach to the one employed in~\cite{ABRS}. Introducing a suitable discretization of the density $\Psi^n_i=\Psi(t_n,y_i)$ on uniform grid with parameters $\Delta x,\Delta \lambda$ in the state space, and $\Delta t$ in time,
the resulting scheme reads
\begin{subequations}
	\begin{align*}
	\tilde \Psi^n_{i}&=\Psi^{n}_{i} + \frac{\Delta t}{\Delta \lambda}\left( {\mathcal T_{i+1/2}[\Psi^n](y_i)-\mathcal T_{i-1/2}[\Psi^n](y_i)}\right),\\
	\Psi^{n+1}_{i}&=\tilde \Psi^n_{i} + \frac{\Delta t}{\Delta x}\left( {\mathcal{V}_{i+1/2}[\Psi^n,w^n](y_i)-\mathcal{V}_{i-1/2}[\Psi^n,w^n](y_i)}\right),\
	\end{align*}
\end{subequations}
where $\mathcal T_{i\pm1/2},\mathcal V_{i\pm1/2}$ are suitable discretizations of the transition operator and the non-local velocity flux. Notice that the update of $\Psi$ follows a two-step approximation, first in $\lambda$ then in~$x$, of the continuity equation \eqref{e:conteq-2-intro} (see also \cite{AlmMorSol21} for a rigorous convergence result).

The realization of the control is approximated using a nonlinear Model Predictive Control (MPC) tecnique. Hence, an open-loop optimal control action is synthesized over a prediction horizon $[0,T_p]$, by solving the optimal control problem \eqref{e:limit-cost-intro}--\eqref{e:conteq-2-intro}.
 Having prescribed the system dynamics and the running cost, this optimization problem depends on the initial state and the horizon $T_p$ only. The control $w^*$, which is obtained for the whole horizon $[0,T_p]$, is implemented over a possibily shorter control horizon $[0,T_c]$. At $t=T_c$ the initial state of the system is re-initialized to $\Psi(T_c)$ and the optimization is repeated. In this setting, to comply with an efficient solution of the dynamics, we perform the MPC optimization selecting  $T_p=T_c=\Delta t$. This choice of the horizons correponds to a instantaneous relaxation towards the target state. For further discussion on MPC literature we refer to \cite{albi2018selective,degond2017meanfield,grune2017nonlinear} and references therein.

\subsection{A leader-follower dynamics}\label{sec:LF}
In this setting, the set $U$ consists of two elements, that is $U:=\{F, L\}$ and is endowed with a two-valued distance
\begin{equation*}
0=\mathrm{dist }(F,F)=\mathrm{dist }(L,L)\,,\quad 1=\mathrm{dist }(F,L)=\mathrm{dist }(L,F)\,.
\end{equation*}
The space $\cP_1(\{F,L\})$ is identified with the interval $[0,1]$; accordingly, in the discrete model, $\lambda_i$ is a scalar value describing the probability of the $i$-th particle of being a follower. 

In order to tune the influence of the control, the simplest possible choice is to fix a function $h_\Psi(x,\lambda)=h(\lambda)$ in \eqref{e:Cauchy} for a suitable bounded non-negative Lipschitz function $h\colon[0,1]\to\R$. In the applications, where the policy maker aims at controlling only the population of leaders, the ideal function~$h$ should be non-increasing and equal to zero when $\lambda$ is close to~$1$. As shown in Proposition~\ref{p:N-particles}, if the cost function $\phi$ satisfies $\{\phi=0\}=\{0\}$, the optimal control will steer only agents with small~$\lambda$.

It is natural to partition the total population into leaders and followers, according to~$\lambda$. Given $\Psi\in\cP(\R^d\times[0,1])$, and for a fixed Lipschitz function $g\colon [0,1]\to [0,1]$, we define the followers and leaders distributions as
\begin{equation}\label{I7}
\mu^F_\Psi(B):=\int_{B \times[0,1]} g(\lambda) \,\mathrm{d}\Psi(x, \lambda),\qquad \mu^L_\Psi(B):=\int_{B \times [0,1]} (1-g(\lambda)) \,\mathrm{d}\Psi(x, \lambda),
\end{equation}
for each Borel set $B \subset \mathbb{R}^d$. 
In particular, the sum $\mu_\Psi^F(B)+\mu_\Psi^L(B)$ coincides with the first marginal of $\Psi$ and therefore it counts the total population contained in $B$.
In the discrete setting, the leaders and followers distributions in \eqref{I7} are given by
\begin{equation}\label{I7-discrete}
\mu_{\Psi^N}^F(B)=\frac1N\sum_{i\,:\,x_i\in B}g(\lambda_i),\quad
 \mu_{\Psi^N}^L(B)=\frac1N\#\{i:x_i\in B\}-\mu_{\Psi^N}^F(B)=\frac1N\sum_{i\,:\,x_i\in B}(1-g(\lambda_i)).
\end{equation}
A typical choice for $g$ is any Lipschitz regularization of the indicator function of the set $\{\lambda \ge m\}$, with $m\ge 0$ a small given threshold. 
Doing so amounts to classifying agents with small~$\lambda$ (and therefore high influence) as leaders and the remaining ones as followers. However, different and softer choices for $g$ are possible. For instance, the choice $g(\lambda)=\lambda$ allows one to measure the average degree of influence of an agent sitting in the region $B$ on the remaining ones.

It is a common feature of many-particle models to assume that 
each agent experiences a velocity which combines the action of the overall followers and leaders distribution. 
Hence, these velocities are an \emph{average} velocity of the system, weighted by the probability $\lambda$ that an agent located at $x$ has of being a 
follower, and have the general form
\begin{equation}\label{e:vel-ex}
\begin{split}
v_{\Psi}(x, \lambda)& =   g_1(\lambda)\int_{\R^d\times[0,1]} \big[K^{FF}(x-x')g_2(\lambda')+K^{LF}(x-x')(1-g_2(\lambda'))\big]\,\de \Psi(x',\lambda')\cr
&+(1-g_1(\lambda))\int_{\R^d\times[0,1]} \big[K^{FL}(x-x')g_2(\lambda')+ K^{LL}(x-x')(1-g_2(\lambda'))\,\de \Psi(x',\lambda'),
\end{split}
\end{equation}
where the functions $g_i\colon[0,1]\to\R$ (for $i=1,2$) are given Lipschitz continuous functions. Let us remark that the choice $g_1=g_2=g$, so that the velocities actually depend on $\Psi$ through the distributions $ \mu^F_\Psi$ and  $\mu^L_\Psi$, is quite plausible in this kind of modeling.
In the discrete setting, a velocity field of this kind reads as 
\begin{equation*}
\begin{split}
v_{\Psi^N}(x_i, \lambda_i) = &\,  g_1(\lambda_i)\bigg(\frac{1}{N}\sum_{j=1}^N K^{FF}(x_i-x_j)g_2(\lambda_j)+\frac{1}{N}\sum_{j=1}^N K^{LF}(x_i-x_j)(1-g_2(\lambda_j))\bigg)\\
&\, + (1-g_1(\lambda_i))\bigg(\frac{1}{N}\sum_{j=1}^N K^{FL}(x_i-x_j)g_2(\lambda_j)+\frac{1}{N}\sum_{j=1}^N K^{LL}(x_i-x_j)(1-g_2(\lambda_j))\bigg).
\end{split}
\end{equation*}

Similar principles can be used for defining the transitions rates. According to the identification of $\cP_1(\{F,L\})$ with $[0,1]$, the transition operator $\cT_\Psi(x,\lambda)$ will be identified with a scalar (see~\eqref{transition} below), instead of taking values in the two-dimensional space $\cF(\{F,L\})$. Indeed, in this case $(\cT_0)$ uniquely determines the second component of $\cT_\Psi$ once the first one is known. For instance, one can consider
\begin{equation}\label{transition}
\cT_\Psi(x,\lambda)=-\alpha_F (x,\Psi)g_3(\lambda) + \alpha_L(x,\Psi)(1-g_3(\lambda)),
\end{equation}
with $\alpha_\bullet$ having the typical form
$$\alpha_\bullet(x,\Psi)=\int_{\R^d\times[0,1]} H_\bullet(x-x')\ell_\bullet(\lambda')\,\de\Psi(x',\lambda'),\quad\text{for $\bullet\in\{F,L\}$,}$$
and where $g_3\colon[0,1]\to[0,1]$, $H_\bullet\colon\R^d\to\R_+$, and $\ell_\bullet\colon[0,1]\to[0,1]$ are given Lipschitz functions. Notice that condition $(\mathcal T_3)$ amounts to requiring that the conditions
\begin{equation}\label{e:alpha}
\alpha_\bullet\ge 0\,,\quad g_3(0)=0\,,\quad g_3(1)=1
\end{equation}
are satisfied (equivalently, the evolution of  $\lambda$ is confined into $[0,1]$). If one chooses $g_3(\lambda)=\lambda$, for fixed $x$ and $\Psi$ the evolution of $\lambda$ is governed by a linear master equation. Instead, for $g_3=g$, the switching rates $\alpha_F$ and $\alpha_L$ are activated depending on the population to which an agent belongs.
The function $H_\bullet$ can be used to localize the effect of the overall distribution on the transition rates; within this model, an agent sitting at $x$ is able to interact only with agents in a small neighborhood around $x$.
Similarly, with a proper choice of $\ell_\bullet$, one can tune the influence of the surrounding agents according to their probability of belonging to the populations of followers or leaders. The choice $\ell_F=1-\ell_L=g$ corresponds to having rates which depend on~$\Psi$ through the distributions $\mu^F_\Psi$ and  $\mu^L_\Psi$. Let us however stress that, in general, also with these choices it is not possible to decouple equation \eqref{e:S2} into a system of equations for  $\mu^F_\Psi$ and  $\mu^L_\Psi$, which, on the contrary, can only be reconstructed after solving for~$\Psi$ first. Some particular cases where this is instead possible are discussed in \cite[Proposition 4.8]{MorSol19}.

With the arguments of \cite[Section~4]{MorSol19}, one can see that choices of $v_\Psi$ and $\cT_\Psi$ made in \eqref{e:vel-ex} and \eqref{transition} fit in our general framework. Let us remark that in \cite[Section~4]{MorSol19}, only the case $g(\lambda)=g_i(\lambda)=\lambda$, $i=1,2,3$ was discussed, but the adaption to the current, more general situation, is straightforward.


A typical Lagrangian that we may consider should penalize the distance of the leaders from a desired goal. This may be encoded by a function of the form
\begin{equation}
\label{e:lag-1}
\cL_1(x,\lambda)=\theta(\lambda)|x-\bar x|^2,
\end{equation}
where $\bar x\in\R^d$ is the position of the desired goal and $\theta\colon[0,1]\to[0,1]$ is zero when $\lambda$ is above a given threshold (a possible choice is even $\theta(\lambda)=1-g(\lambda)$).
Moreover, a competing effect, depending on the overall distribution of the population, can be taken into account: leaders should stay as close as possible to the population of followers, in order to influence their behavior.
This may be encoded by a function of the form
\begin{equation}
\label{e:lag-2}
\cL_2(x,\lambda,\Psi)=\theta(\lambda) \bigg|x-\fint_{\R^d} x'\,\de\mu_\Psi^F(x')\bigg|^2,
\end{equation}
which favors a leader agent to be close to the barycenter of the followers distribution. 
Notice that the function $\cL_2$ depends continuously on $\Psi$ as long as $\mu_\Psi^F(\R^d)>0$, which is always the case in practical situations.
Hence, the Lagrangian of the system is the sum
\begin{equation}
\label{e:lag-3}
\cL(x,\lambda,\Psi)\coloneq \alpha\cL_1(x,\lambda)+(1-\alpha)\cL_2(x,\lambda,\Psi)\,,
\end{equation}
for $\alpha\in[0,1]$ a given constant.

Finally, a very simple and natural family of cost functions is 
\begin{equation}
\label{e:cost-phi}
\phi_{p}(u) = \frac{\gamma}{p}| u|^{p}\,, \qquad \text{for $p \in (1, +\infty)$ and $u \in \R^{d}$.}
\end{equation}
In particular,~$\phi_{p}$ is strictly convex and $\{\phi_{p} = 0\} = \{0\}$, so that the conclusions of Proposition~\ref{p:N-particles} hold true in the case $h_{\Psi} = h$ mentioned above. Namely, the optimal control~$\uu \in L^{1}([0,T] ; (\R^{d})^{N})$ in the $N$-particle problem will actually act only on the population of leaders, while the evolution of the population of followers will be determined by the velocities and transitions rates detailed above.

%
%
%

\subsubsection{Test~1: Opinion dynamics with emerging leaders population} 
We study the setting proposed in \cite{BMPW2009,albi2014boltzmann} for opinion dynamics in presence of leaders influence, and we assume that $x\in[-1,1]$, where $\{\pm1\}$ identify two opposite opinions. 
The interaction field $v_{\Psi}$ \eqref{e:vel-ex} is characterized by bounded confidence kernels with the following structure
\begin{align}
K^{\star \bullet}(x-x') = \chi_\varepsilon(\{|x-x'|\leq \kappa_{\star\bullet}\}),\qquad\text{for $\star,\bullet\in\{F,L\}$,}
\end{align}
where $\eps\geq0$ is a regularization parameter for the characteristic function $\chi$ and $\kappa_{\bullet\star}$ represent the confidence intervals with the following numerical values,
\[
\kappa_{FF} = 0.25,\quad\kappa_{FL} = 0.5,\quad \kappa_{LF}= 0, \quad \kappa_{LL} = 0.2.
\]
The weighting functions $g_1,g_2$ are such that $ g_1(\lambda)\equiv g_2(\lambda)\equiv \ell(\lambda)$ with
\begin{align}
\ell(\lambda) = \dfrac{e^{C(\lambda-\bar \lambda)}}{1+e^{C(\lambda-\bar \lambda)}}, \quad C = 10^3,\quad \bar \lambda = 0.5.
\end{align}
The transition operator $\mathcal{T}_\Psi(x,\lambda)$ in \eqref{transition} is identified by the following quantities
\begin{align*}
\alpha_{F}(x,\Psi) = a_F\left(1-\mathcal D_L(x,\Psi)\right),\quad 
\alpha_{L}(x,\Psi) = a_L\left(1-\mathcal D_F(x,\Psi)\right),\quad
g_3(\lambda) = \lambda,
\end{align*}
where the functions $\mathcal D_F$ and $ \mathcal D_L$ represent the concentration of followers and leaders at position~$x$ and are defined by
\begin{align}\label{e:conc-ex1}
\mathcal D_\bullet(x,\Psi) = S_\bullet\int_{\R^d\times[0,1]}\exp\left\{{-\dfrac{(x-x')^2}{\sigma_\bullet^2}}\right\} G_\bullet(\lambda')\de\Psi(x',\lambda'),\qquad \bullet\in\left\{F,L\right\},
\end{align}
with $G_F(\lambda) = \ell(\lambda)$ and $G_L(\lambda) = 1-G_F(\lambda)$, and $S_\bullet$ normalization constants such that concentrations are bounded above by one, \emph{i.e.}, $\mathcal D_\bullet(x,\Psi) \in[0,1]$ to preserve the positivity of the rates~$\alpha_F$ and $\alpha_L$, and with the following parameters
\[
a_F=0.025,\quad a_L = 0.05,\quad \sigma_F=\sigma_L=0.1.
\]

Finally, the cost functional is defined by the Lagrangian defined in \eqref{e:lag-3}, which steers followers towards $\bar x=-0.5$ and keeps track of followers average position with $\alpha = 0.35$ and $\theta(\lambda) = 1-\ell(\lambda)$. We account for quadratic penalization of the control in \eqref{e:cost-phi} by choosing $\gamma = 2$.

In Figure~\ref{fig:init_data_Test1}, we report the choice of the initial data, and the marginals $\mu^{F}_\Psi(t,x)$, $\mu^{L}_\Psi(t,x)$ relative to the opinion space, and to the label space $\nu^F_\Psi(t,\lambda)$, $\nu^L_\Psi(t,\lambda)$. The structure of the initial data is a bimodal Gaussian distribution defined as follows
\[
\Psi_0(x,\lambda) \coloneqq C_0 \left(\exp\left\{-\frac{(x-x_F)^2}{\sigma_{x,F}^2}-\frac{(\lambda-\lambda_F)^2}{{\sigma_{\lambda,F}^2}}\right\}+\exp\left\{-\frac{(x-x_L)^2}{{\sigma_{x,L}^2}}-\frac{(\lambda-\lambda_L)^2}{{\sigma_{\lambda,L}^2}}\right\}\right),
\]
where ${\sigma_{\lambda,F}^2}=\sigma_{\lambda,L}^2=1/100$, ${\sigma_{x,L}^2}=1/50$, $\sigma_{x,F}^2 =1/30$, 
$\lambda_F=0.45$, $\lambda_L = -0.45$, $x_F=0.3$, $x_L=0.7$ and $C_0$ is normalizing constant.
\begin{figure}[!ht]
	\begin{center}
		\includegraphics[width=0.325\linewidth]{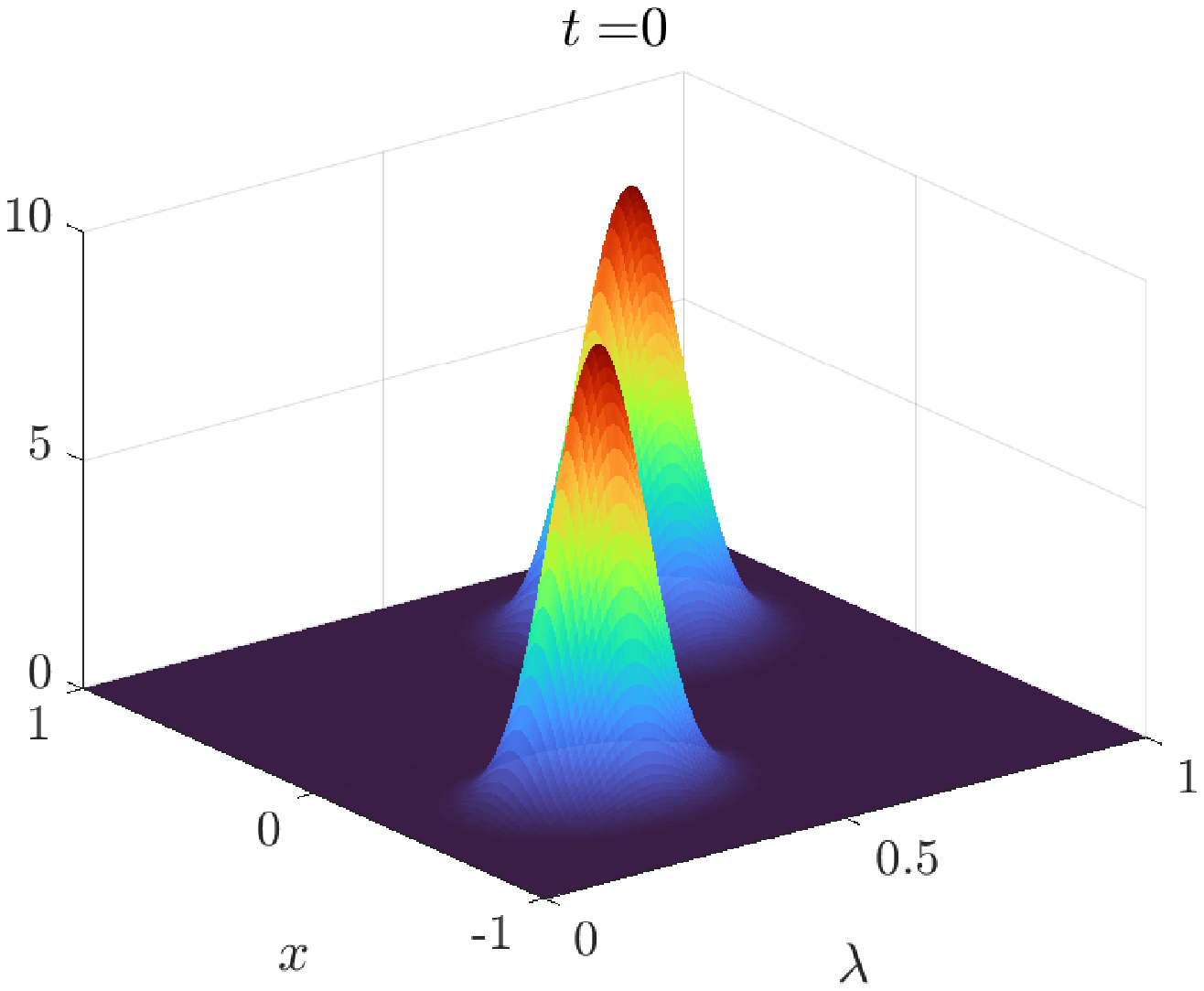}
		\includegraphics[width=0.325\linewidth]{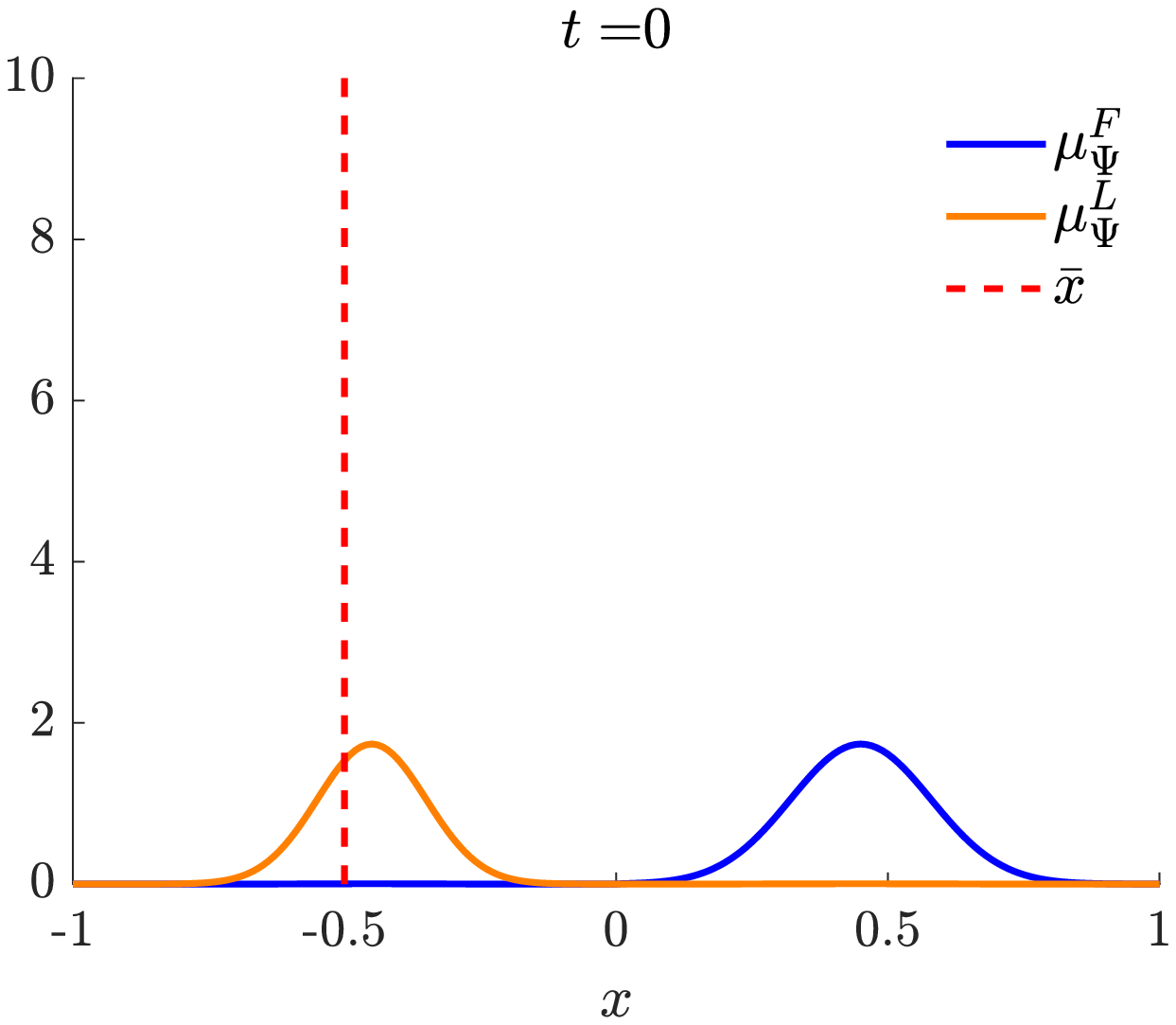}
		\includegraphics[width=0.325\linewidth]{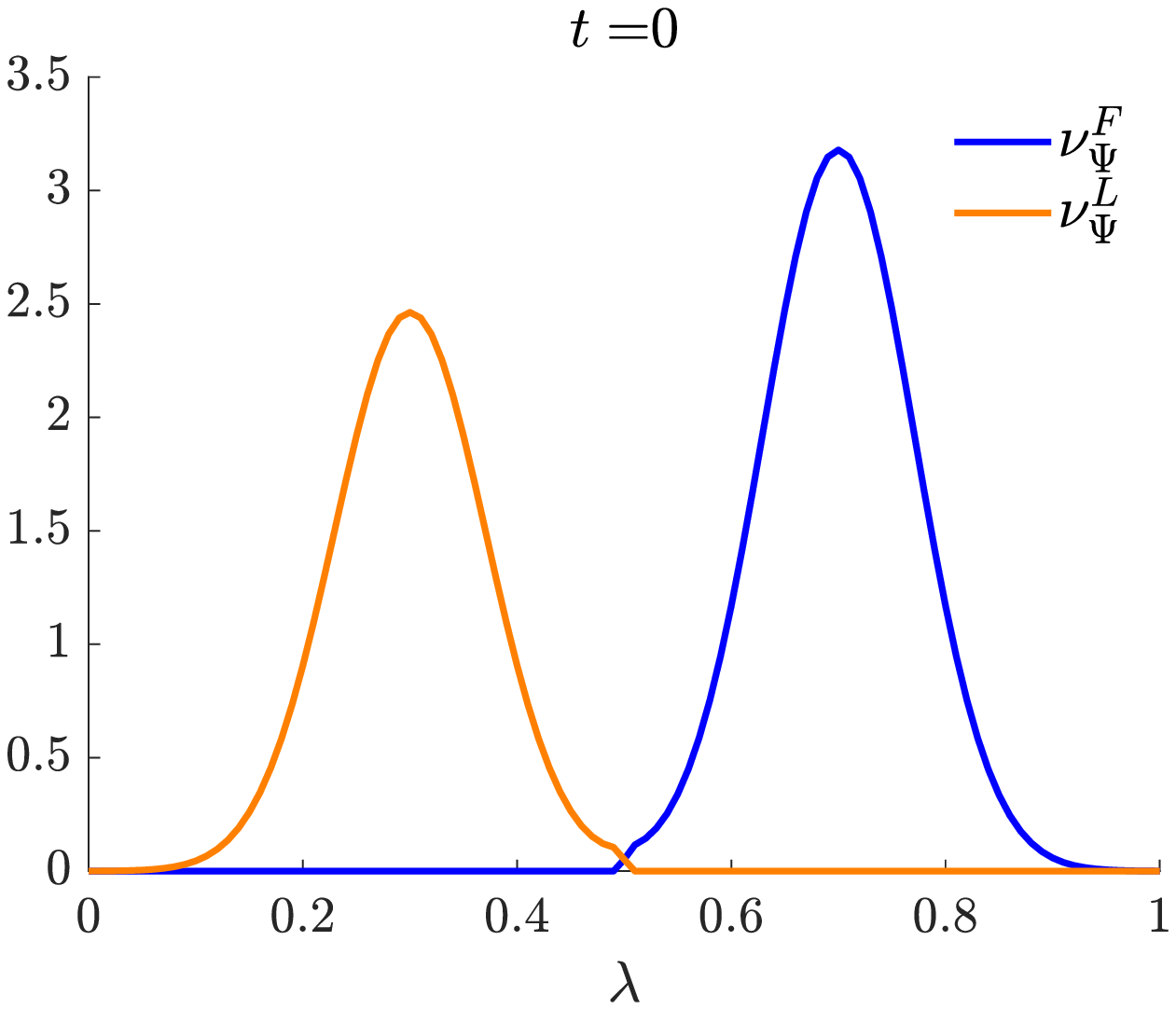}
		\caption{{\em Test 1.} Intial distribution $\Psi_0(x,\lambda)$ and the marginals associated with the opinion space $\mu^{F}_\Psi(t,x)$, $\mu^{L}_\Psi(t,x)$, and to the label space $\nu^F_\Psi(t,\lambda)$, $\nu^L_\Psi(t,\lambda)$. The red dashed line marks the target position $\bar x=-0.5$.} \label{fig:init_data_Test1}
	\end{center}
\end{figure}
Figure \ref{fig:evo_noctrl_Test1} reports from left to right four frames of the marginals up to time $t=10$, without control. We observe transition from leader to follower, and  viceversa, where, without the action of a policy maker, the initial clusters of opinions remain bounded away and no consensus is reached. In Figure~\ref{fig:evo_ctrl_Test1}, control is activated and in this case we observe the steering action of the leaders towards the target position $\bar x$.
\begin{figure}[!ht]
	\begin{center}
		\includegraphics[width=0.225\linewidth]{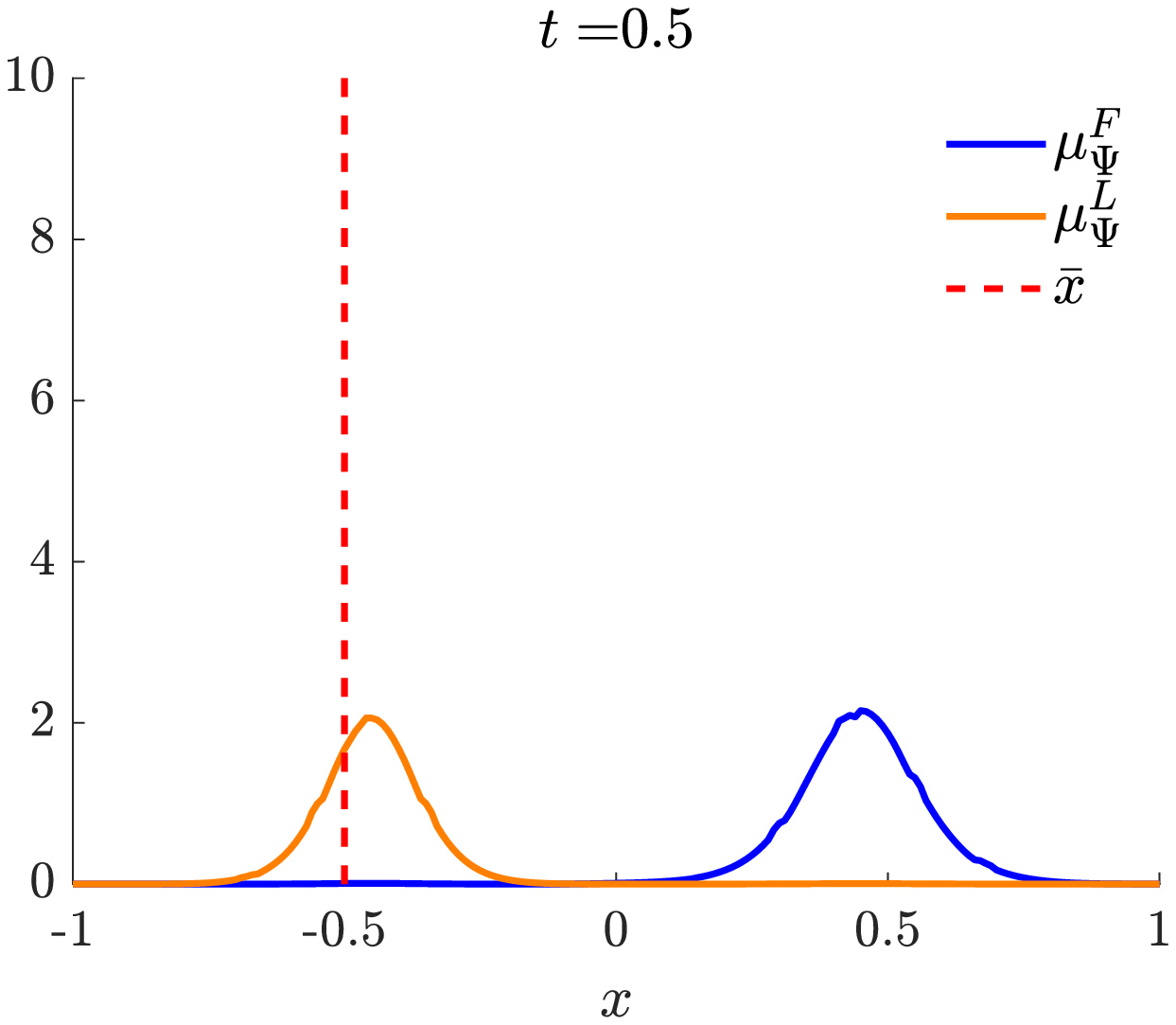}
		\includegraphics[width=0.225\linewidth]{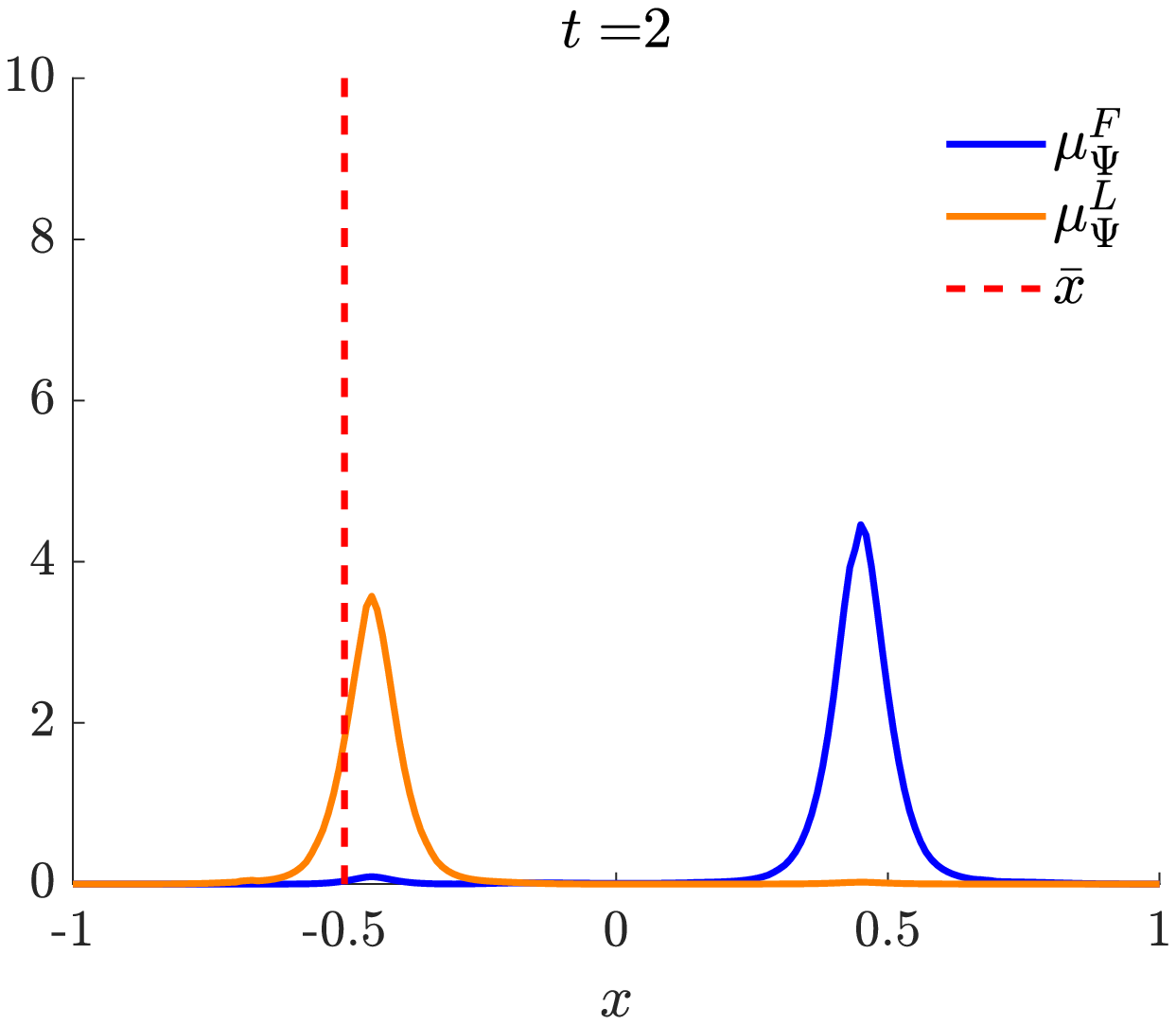}
		\includegraphics[width=0.225\linewidth]{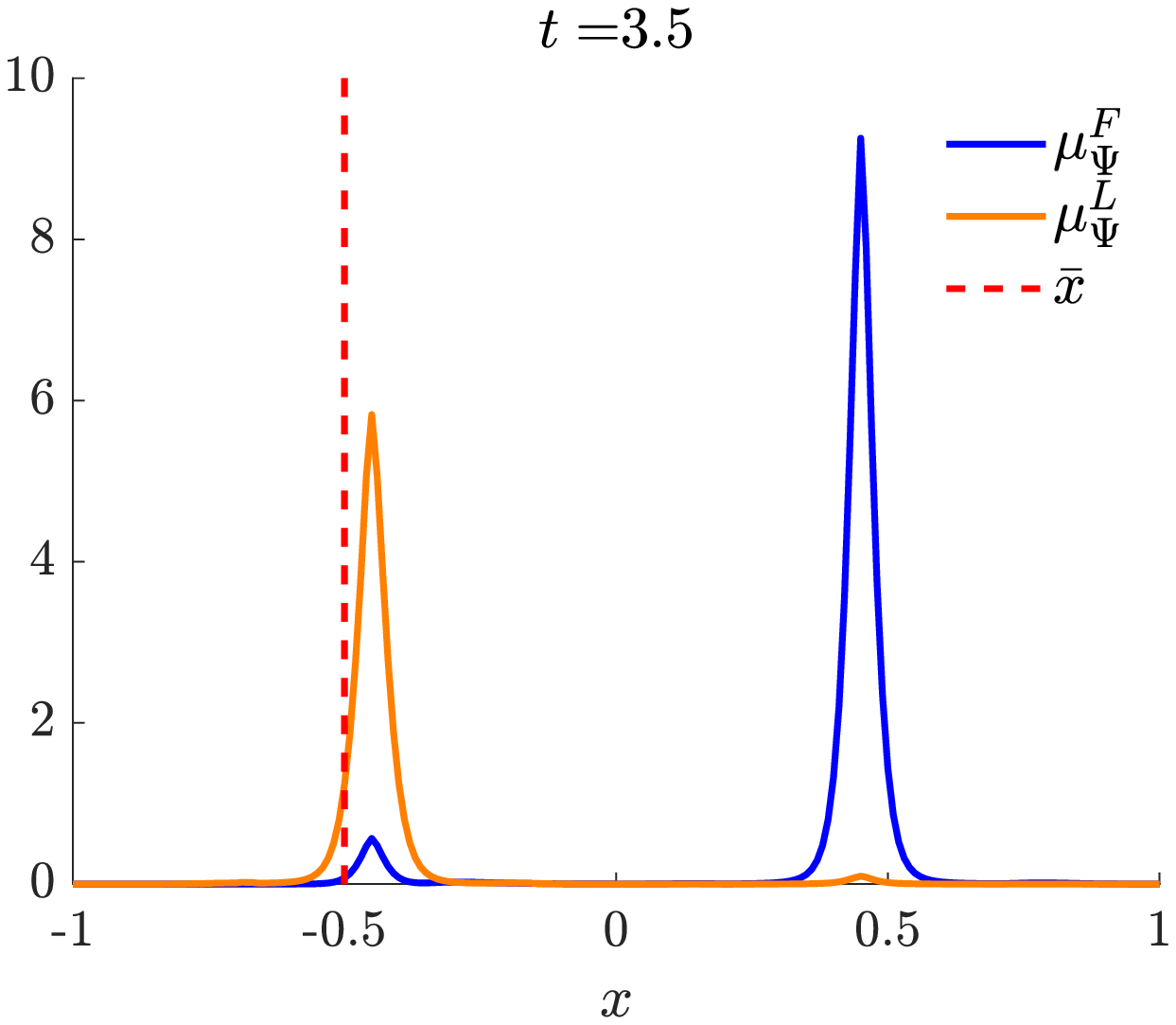}
		\includegraphics[width=0.225\linewidth]{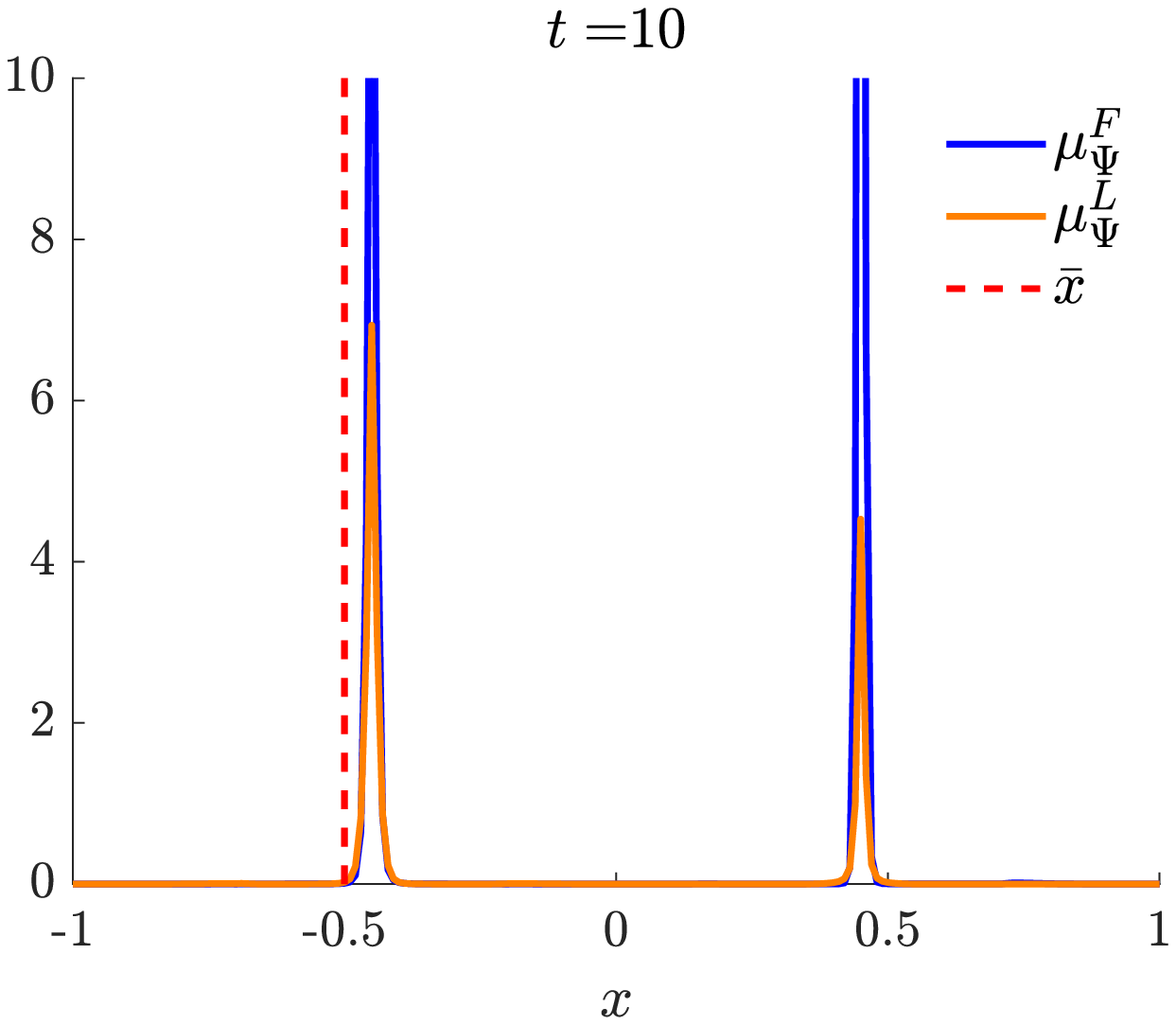}
		\\
		\includegraphics[width=0.225\linewidth]{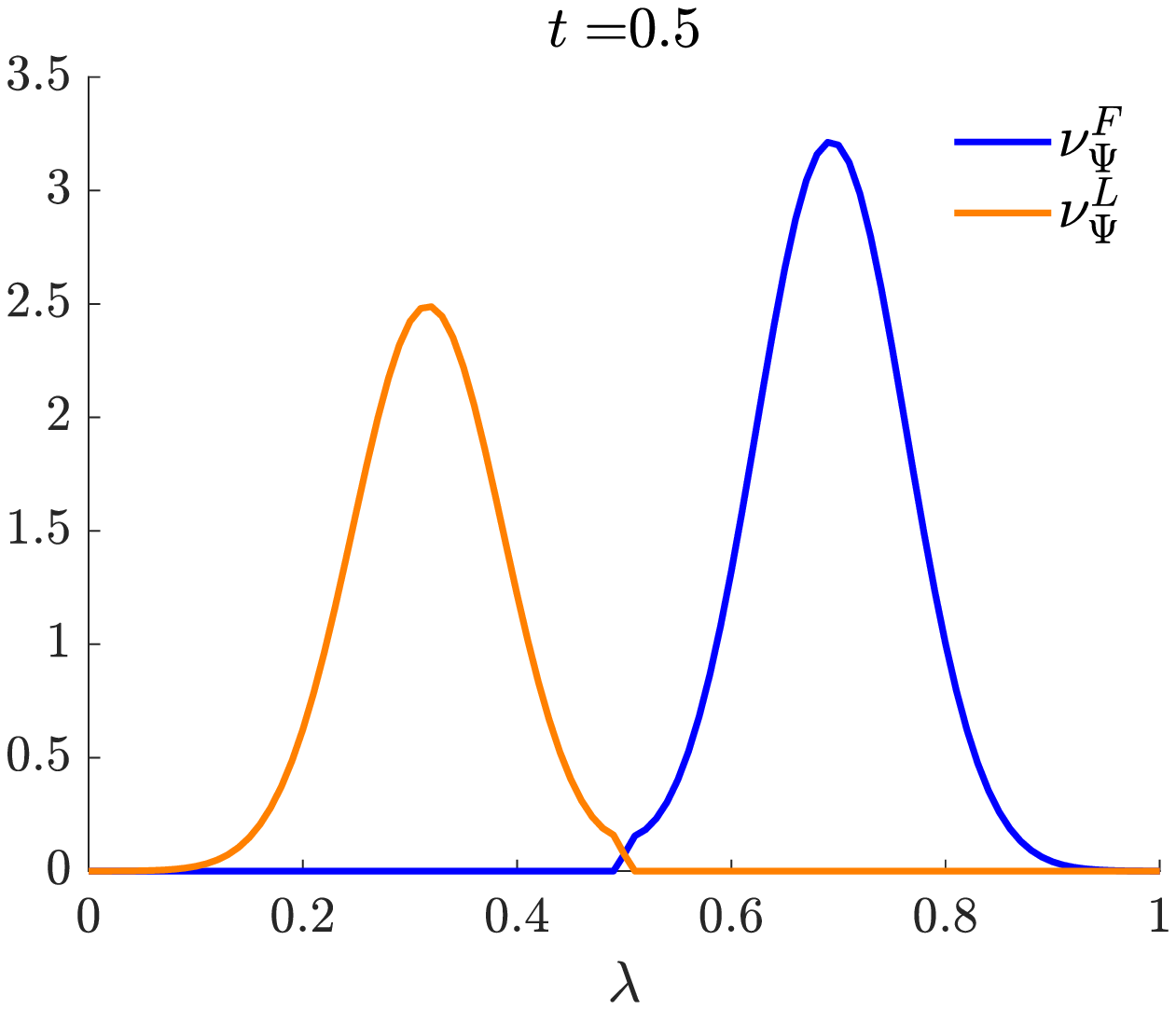}
		\includegraphics[width=0.225\linewidth]{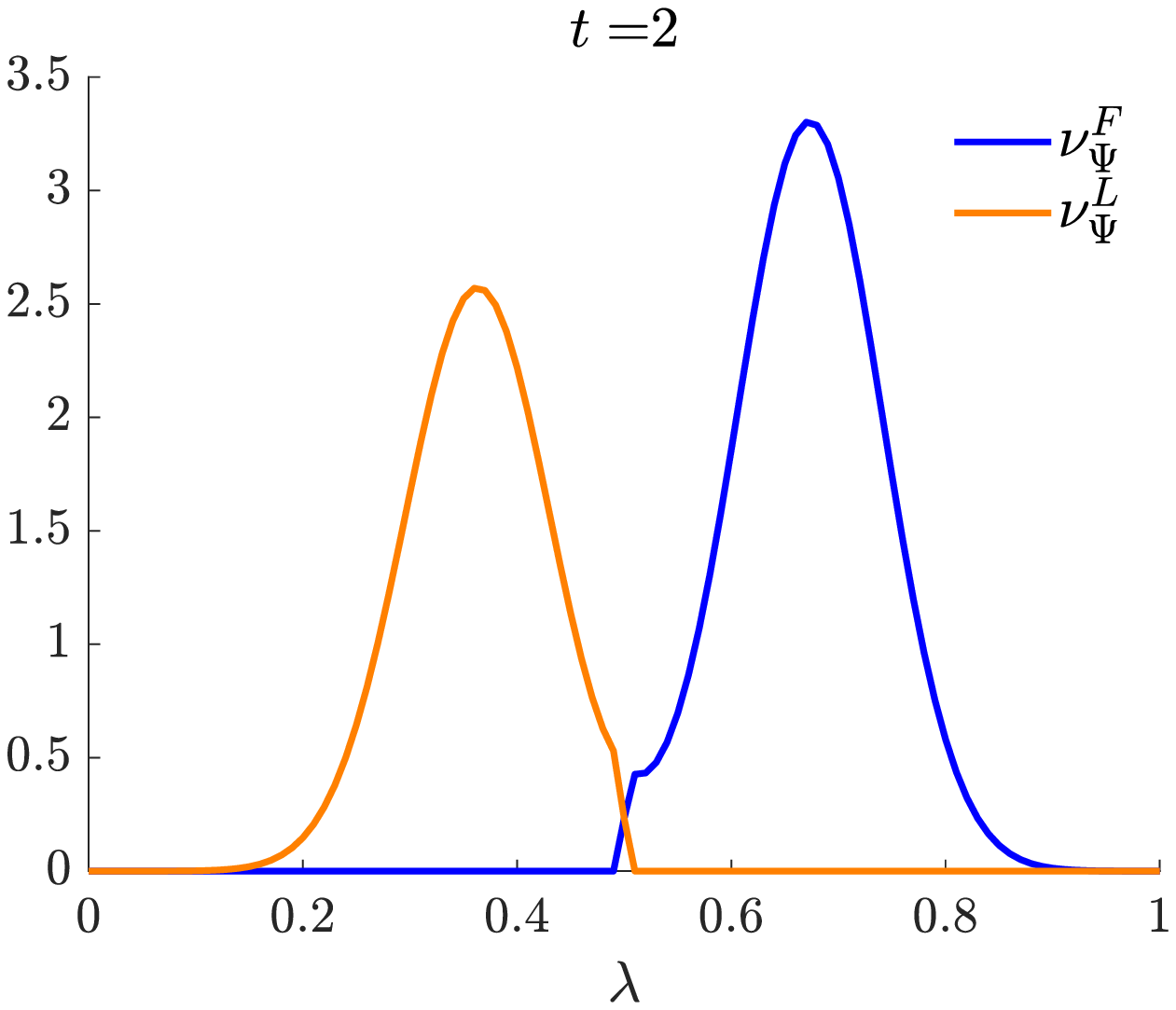}
		\includegraphics[width=0.225\linewidth]{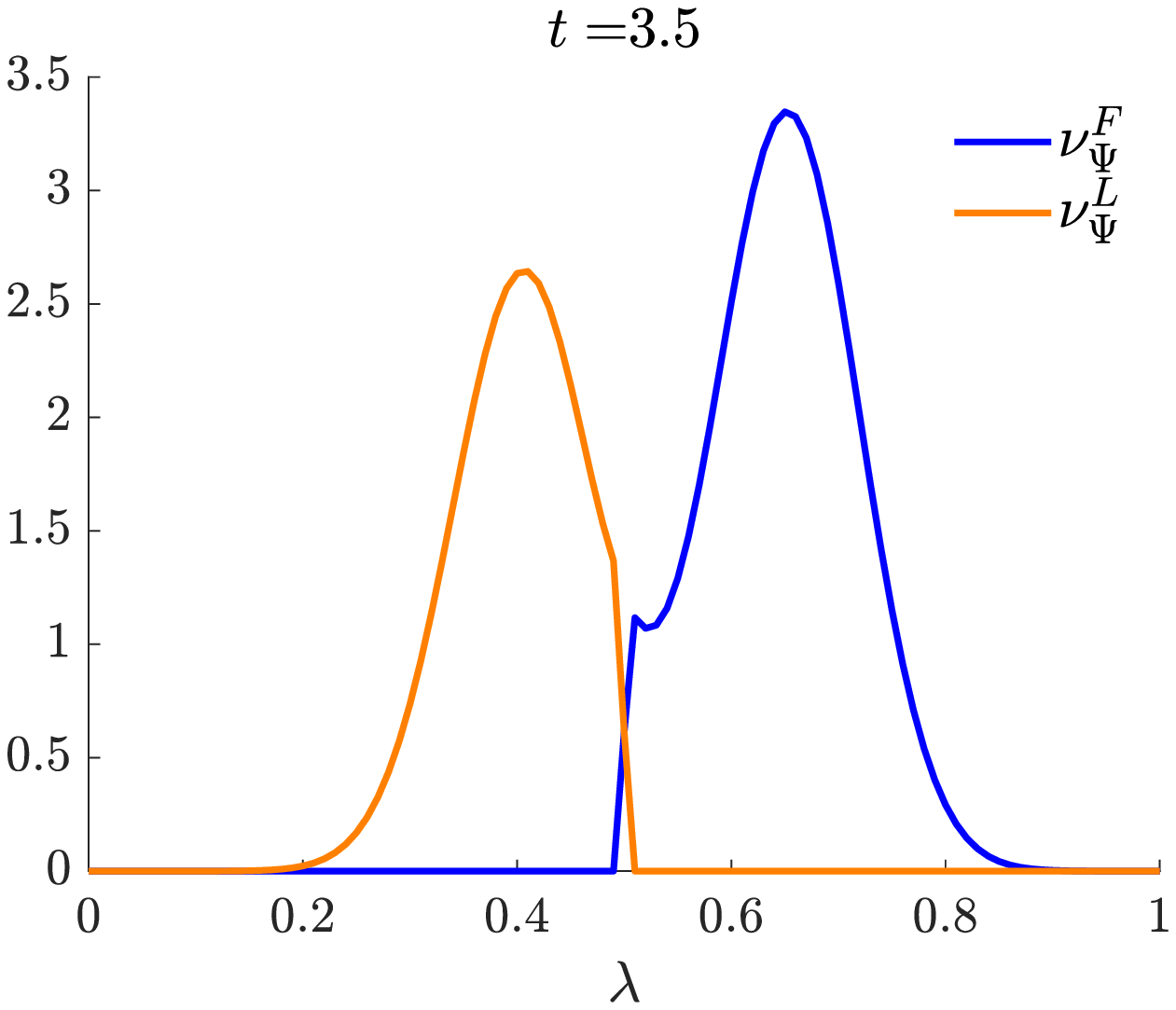}
		\includegraphics[width=0.225\linewidth]{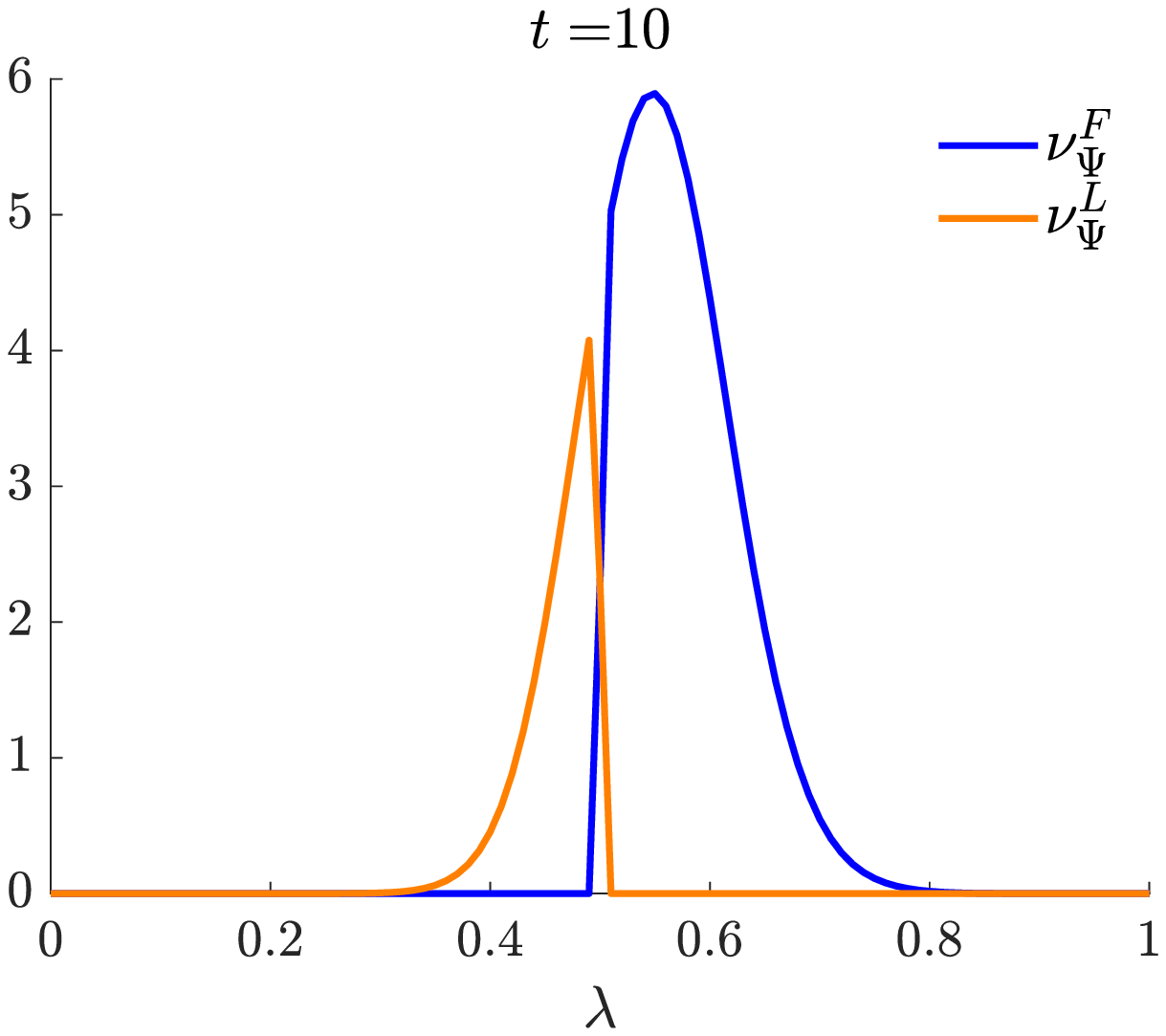}
		\caption{{\em Test 1.}~Evolution of the marginals without control. In the top row $\mu^{F}_\Psi(t,x)$ and $\mu^{L}_\Psi(t,x)$ are depicted; in the bottom row $\nu^F_\Psi(t,\lambda)$, $\nu^L_\Psi(t,\lambda)$ are depicted, both for time frames associated with $t=0.5,2, 3.5, 10$.} \label{fig:evo_noctrl_Test1}
	\end{center}
\end{figure}
\begin{figure}[!ht]
	\begin{center}
		\includegraphics[width=0.225\linewidth]{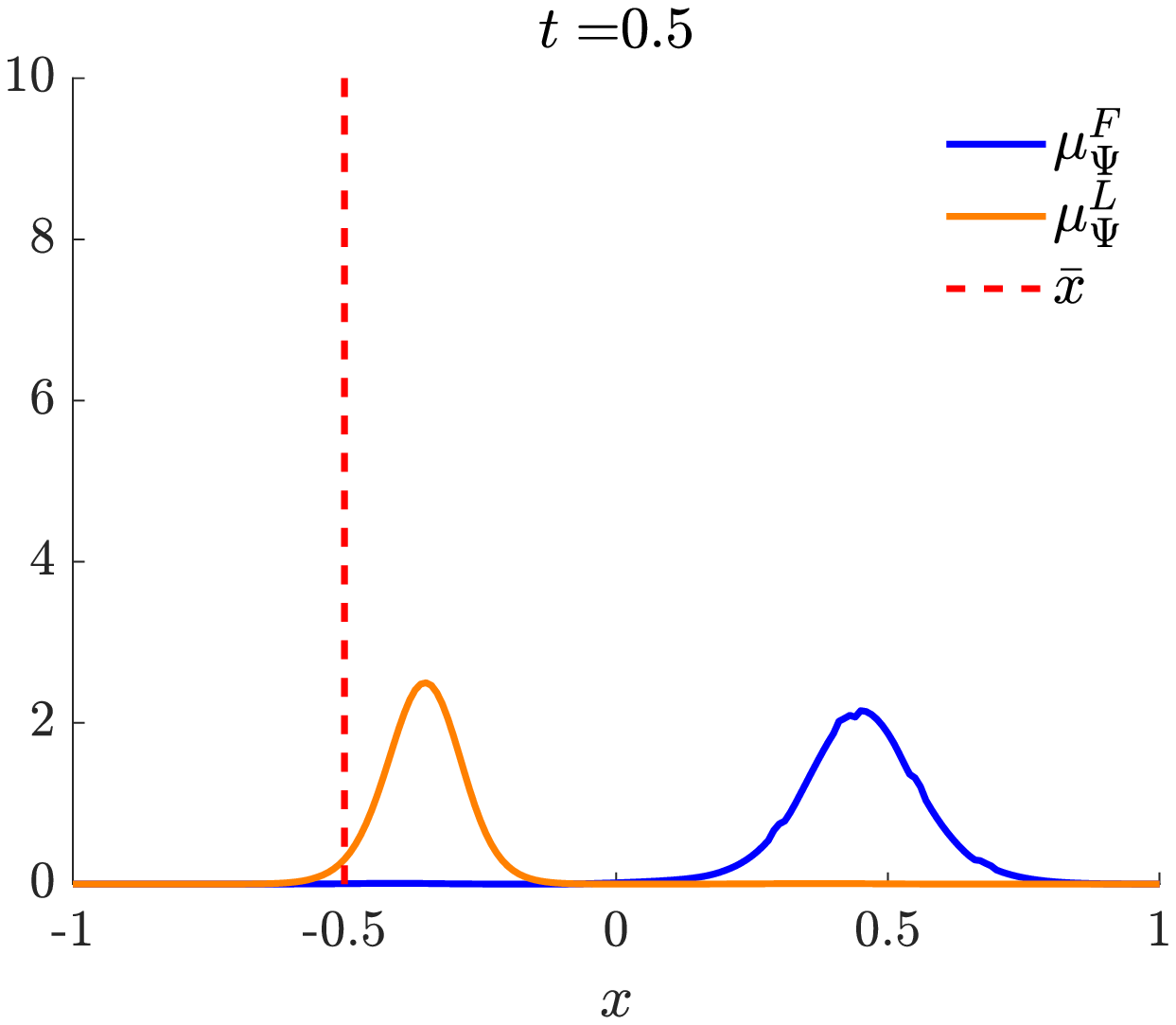}
		\includegraphics[width=0.225\linewidth]{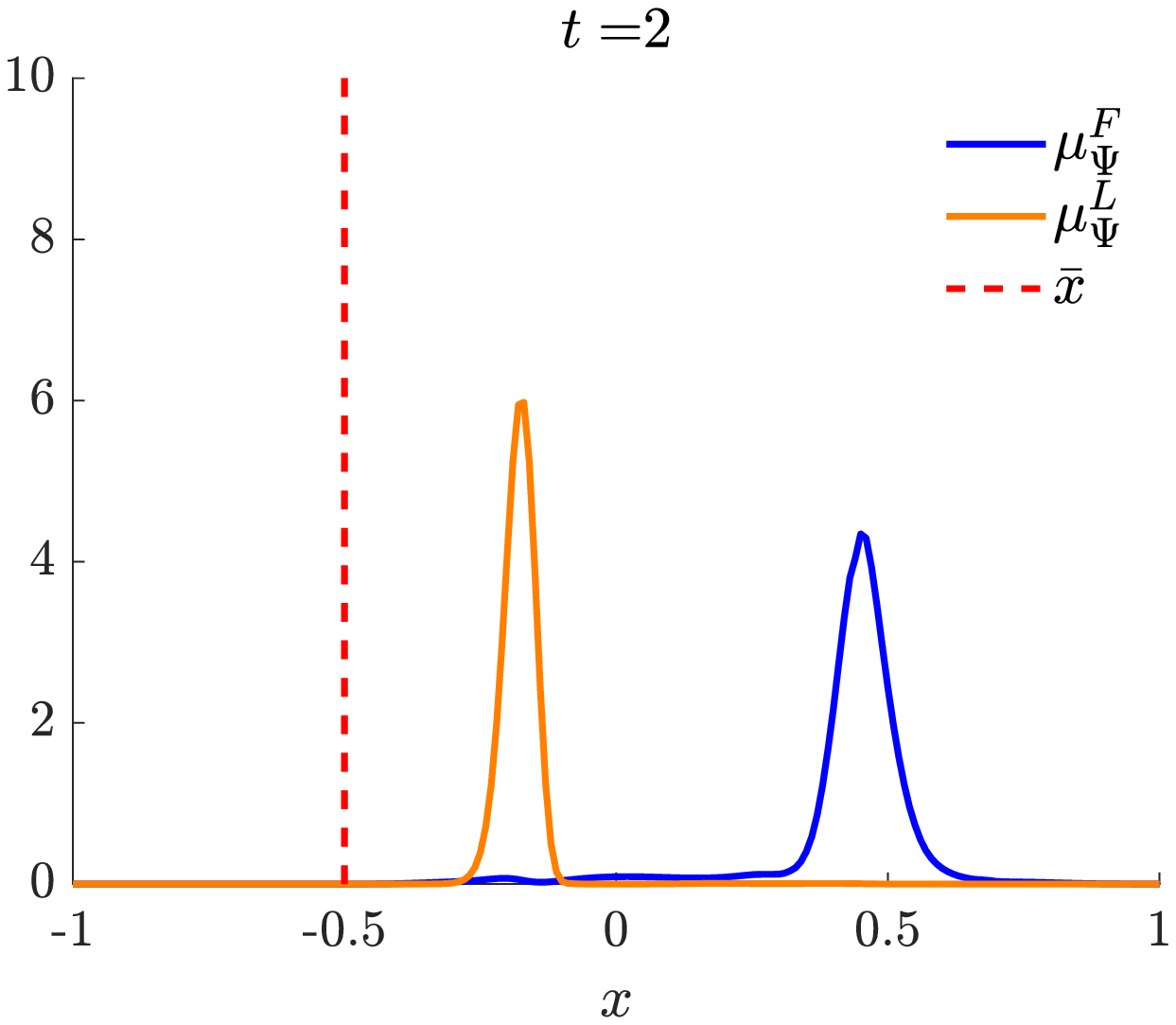}
		\includegraphics[width=0.225\linewidth]{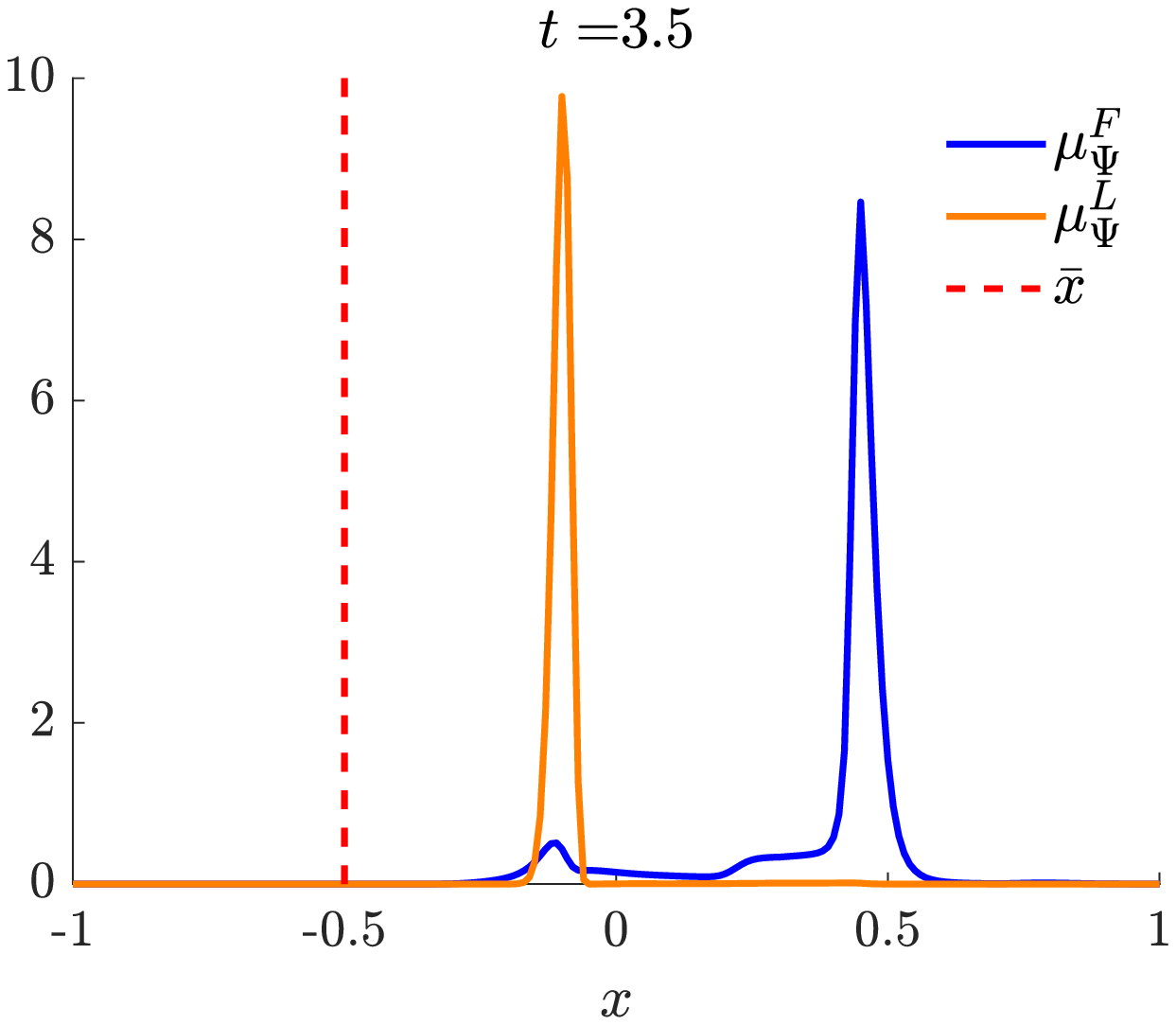}
		\includegraphics[width=0.225\linewidth]{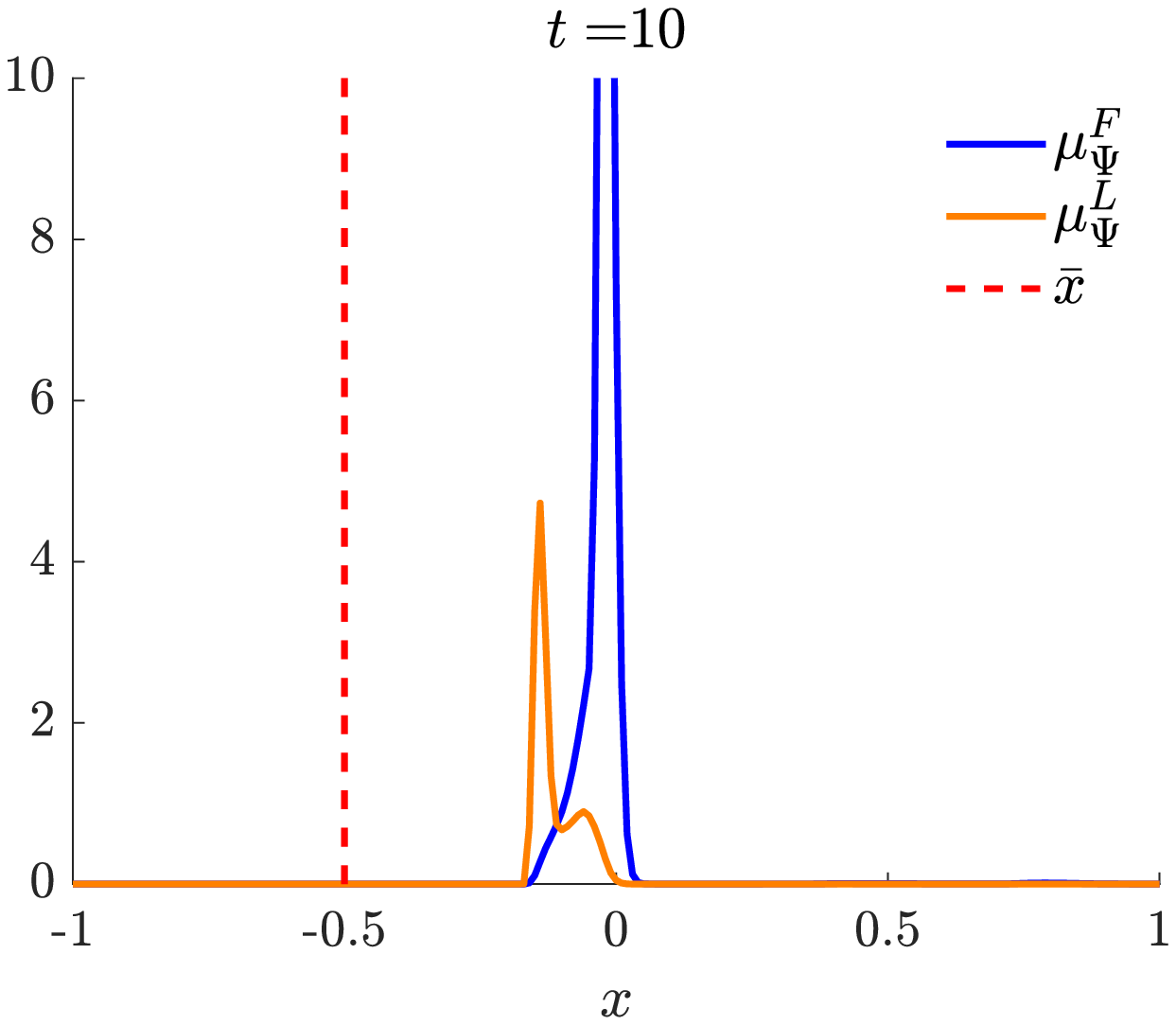}
		\\
		\includegraphics[width=0.225\linewidth]{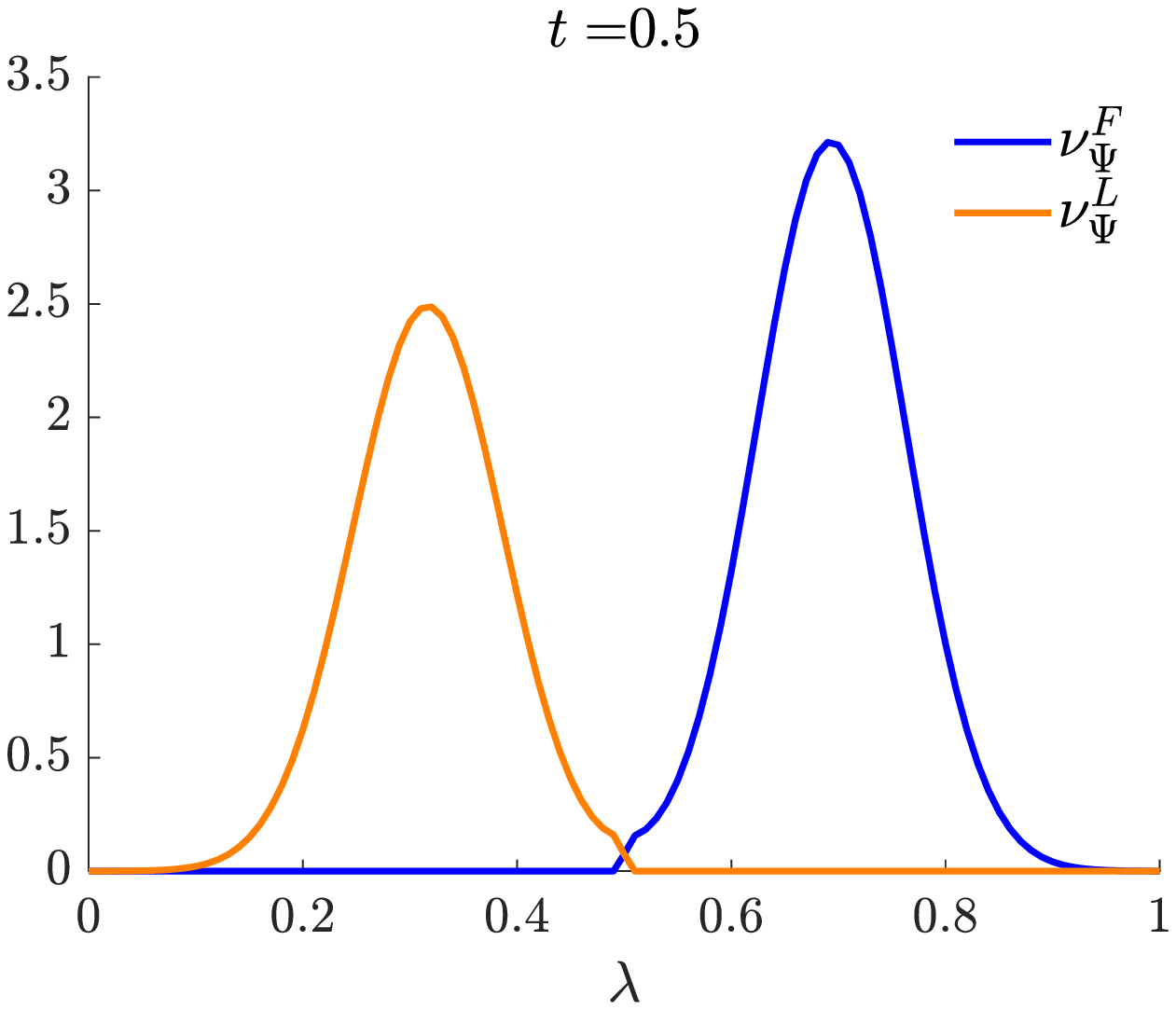}
		\includegraphics[width=0.225\linewidth]{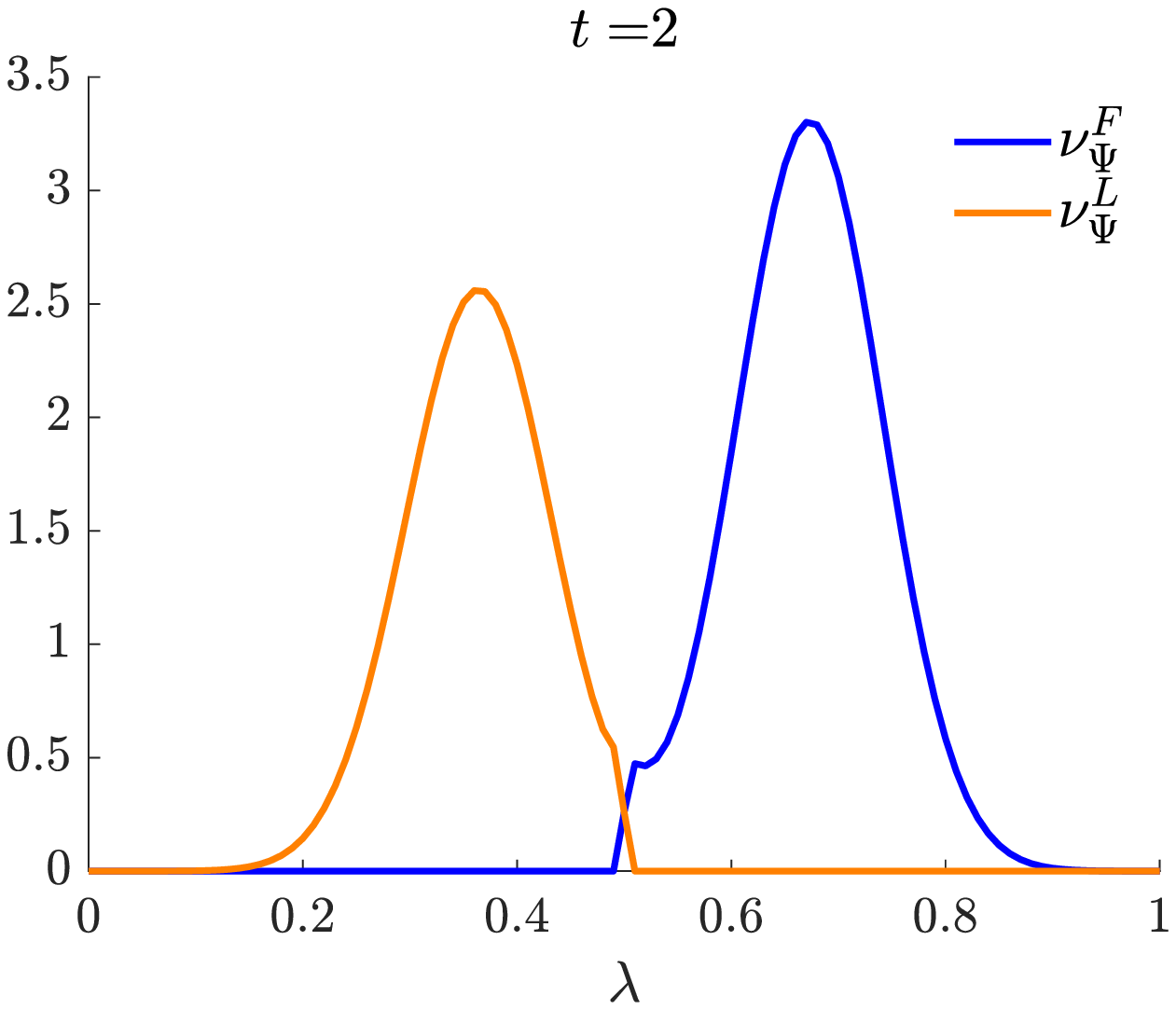}
		\includegraphics[width=0.225\linewidth]{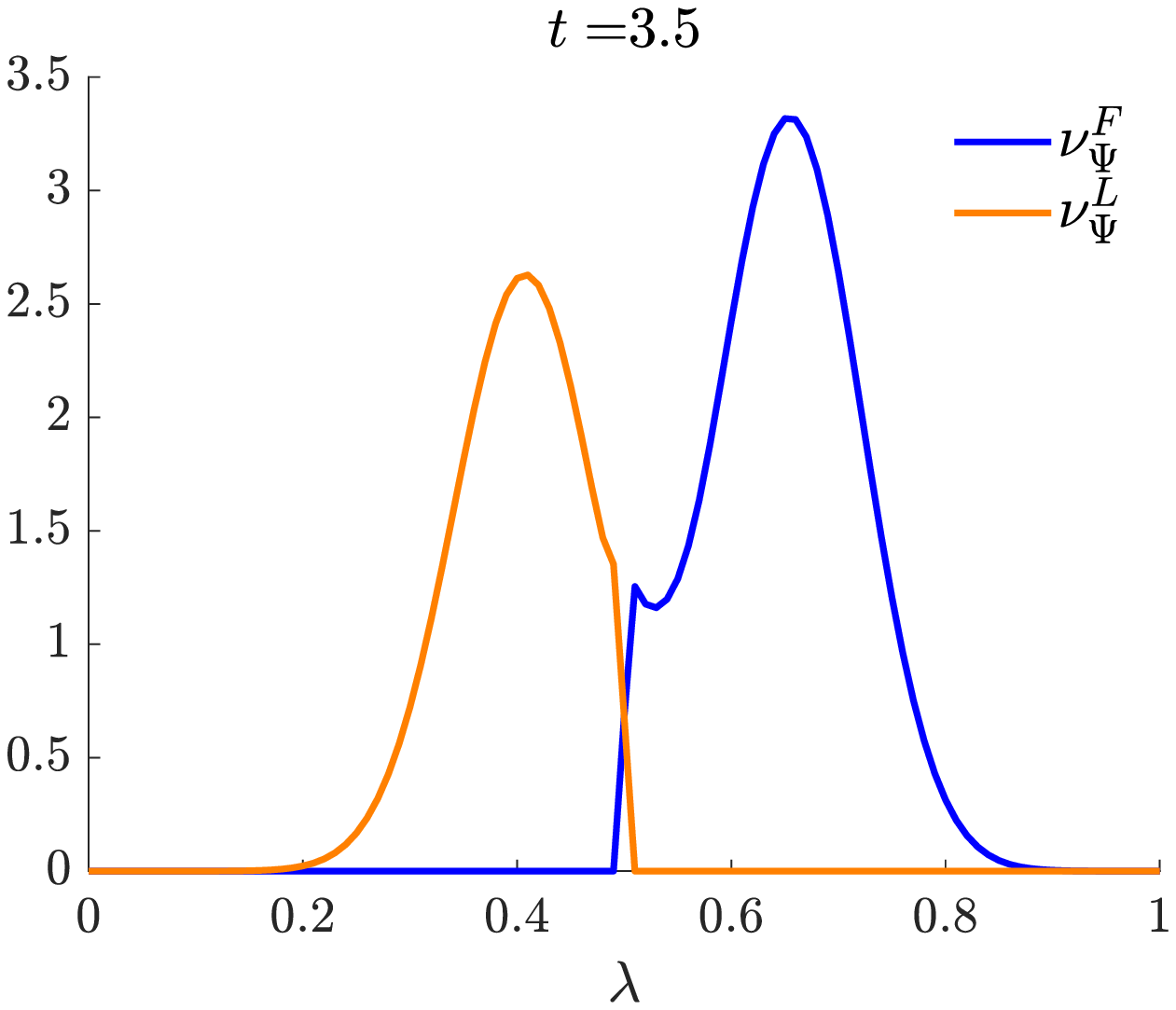}
		\includegraphics[width=0.225\linewidth]{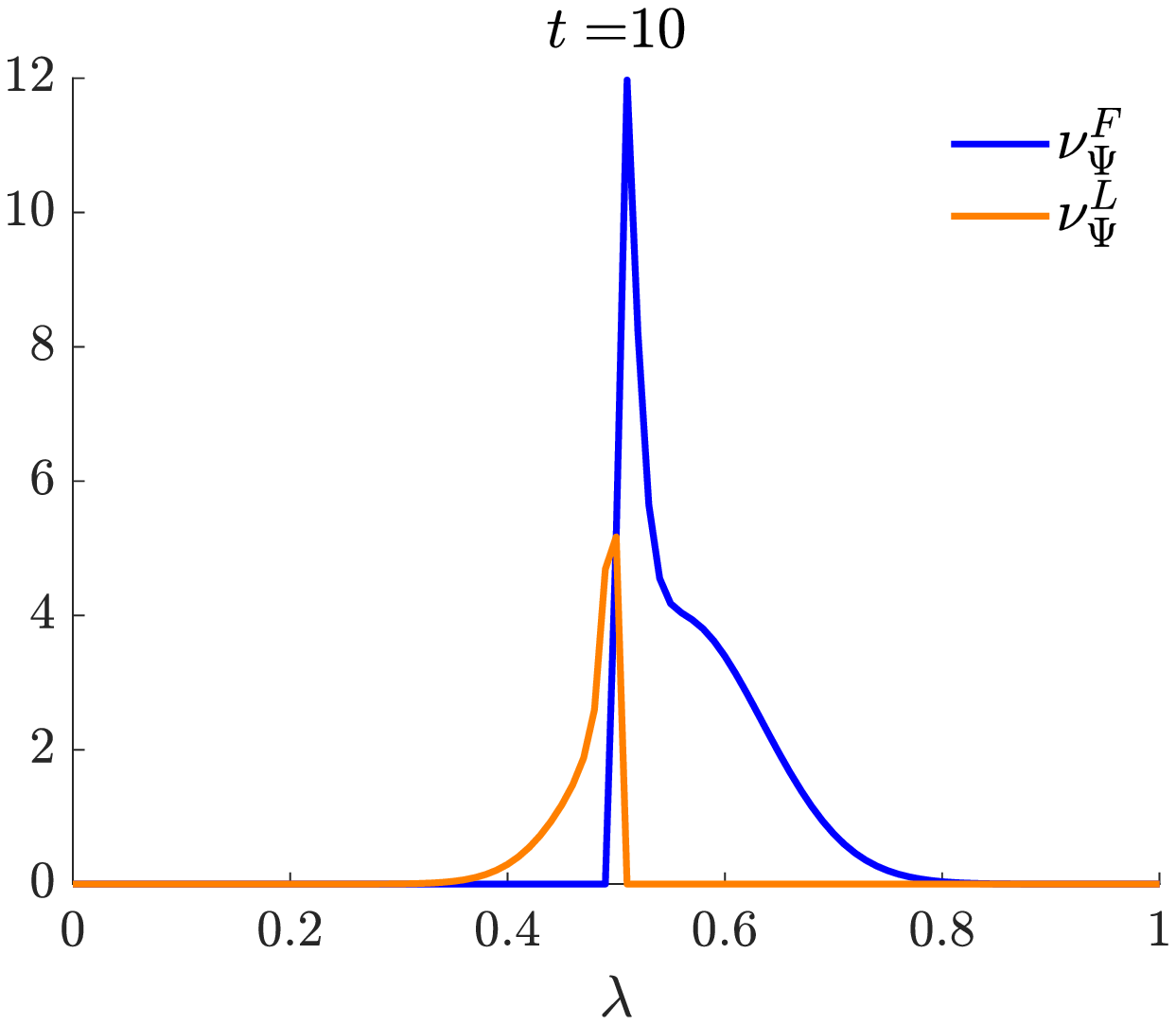}
		\caption{{\em Test 1.}~Evolution of the marginals with control. In the top row $\mu^{F}_\Psi(t,x)$ and $\mu^{L}_\Psi(t,x)$ are depicted; in the bottom row $\nu^F_\Psi(t,\lambda)$, $\nu^L_\Psi(t,\lambda)$ are depicted, both for time frames associated with $t=0.5,2, 3.5, 10$.} \label{fig:evo_ctrl_Test1}
	\end{center}
\end{figure}
We summarize the evolution of controlled and uncontrolled case up to final time  $T=50$ in Figure~\ref{fig:evo_Test1}, comparing the control and uncontrolled cases, respectively. We compare marginals $\mu^F_\Psi$ and $\mu^L_\Psi$ and the percentage of followers and leaders as  functions of time.
\begin{figure}[!ht]
	\begin{center}
		\includegraphics[width=0.325\linewidth]{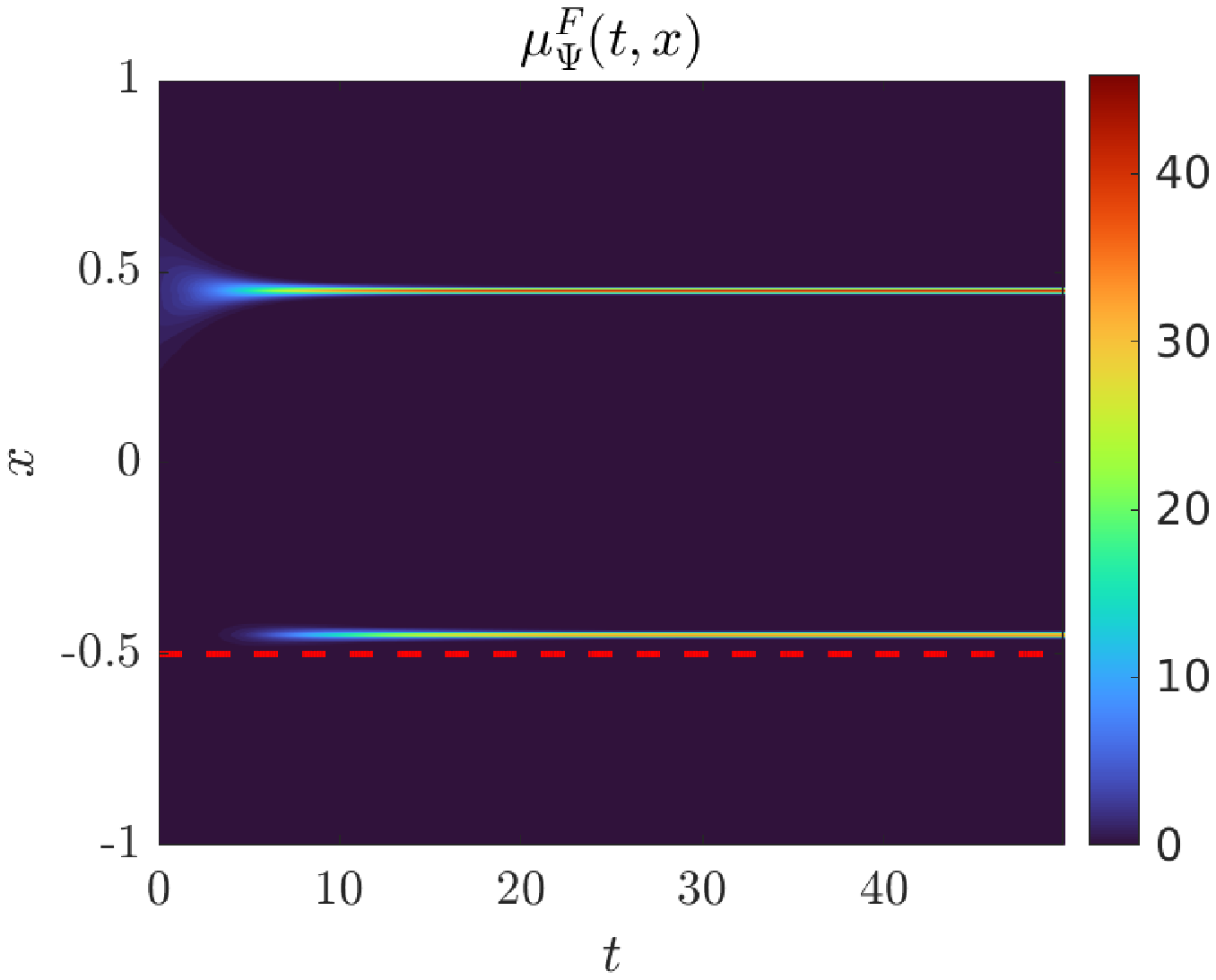}
		\includegraphics[width=0.325\linewidth]{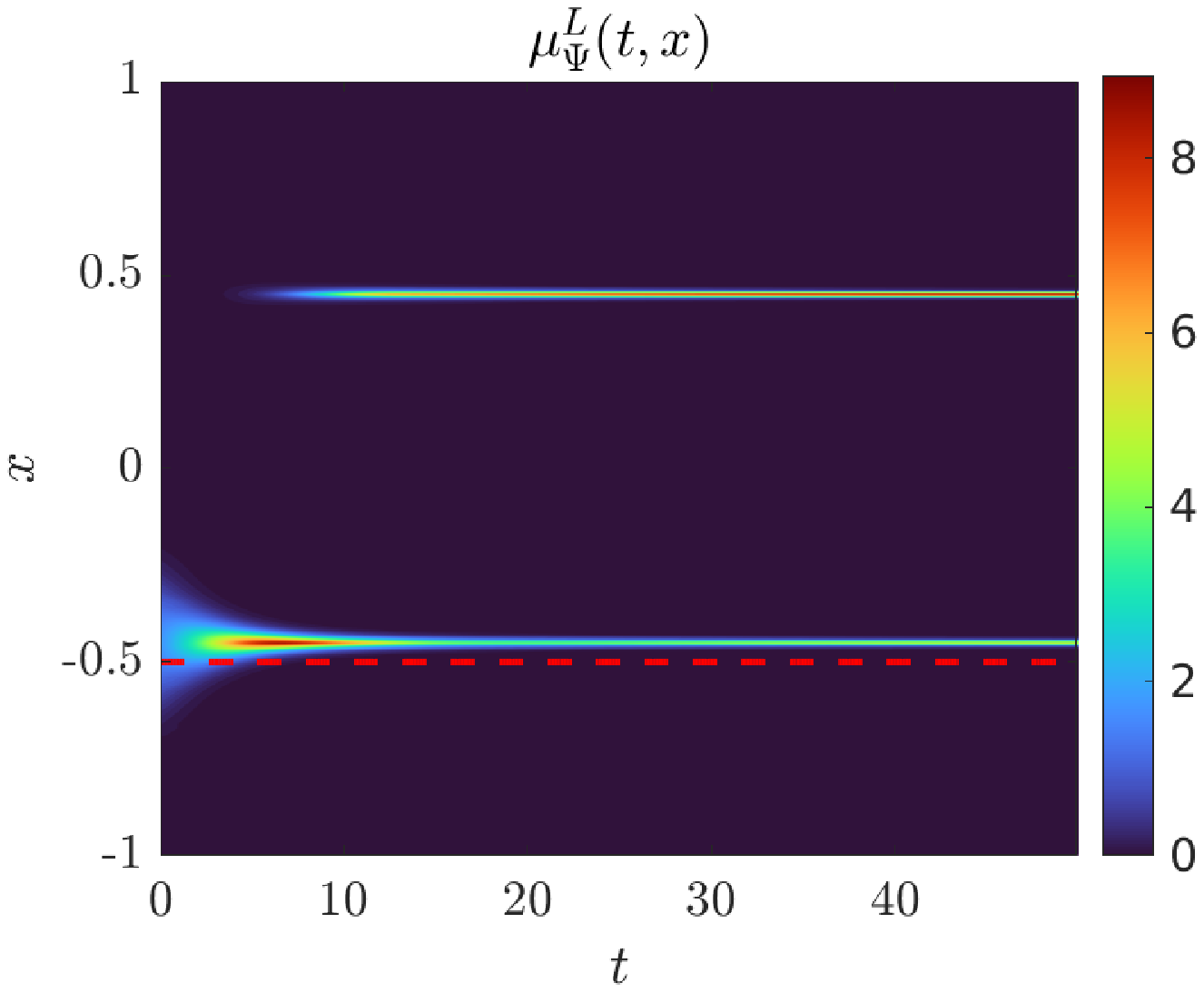}
		\includegraphics[width=0.275\linewidth]{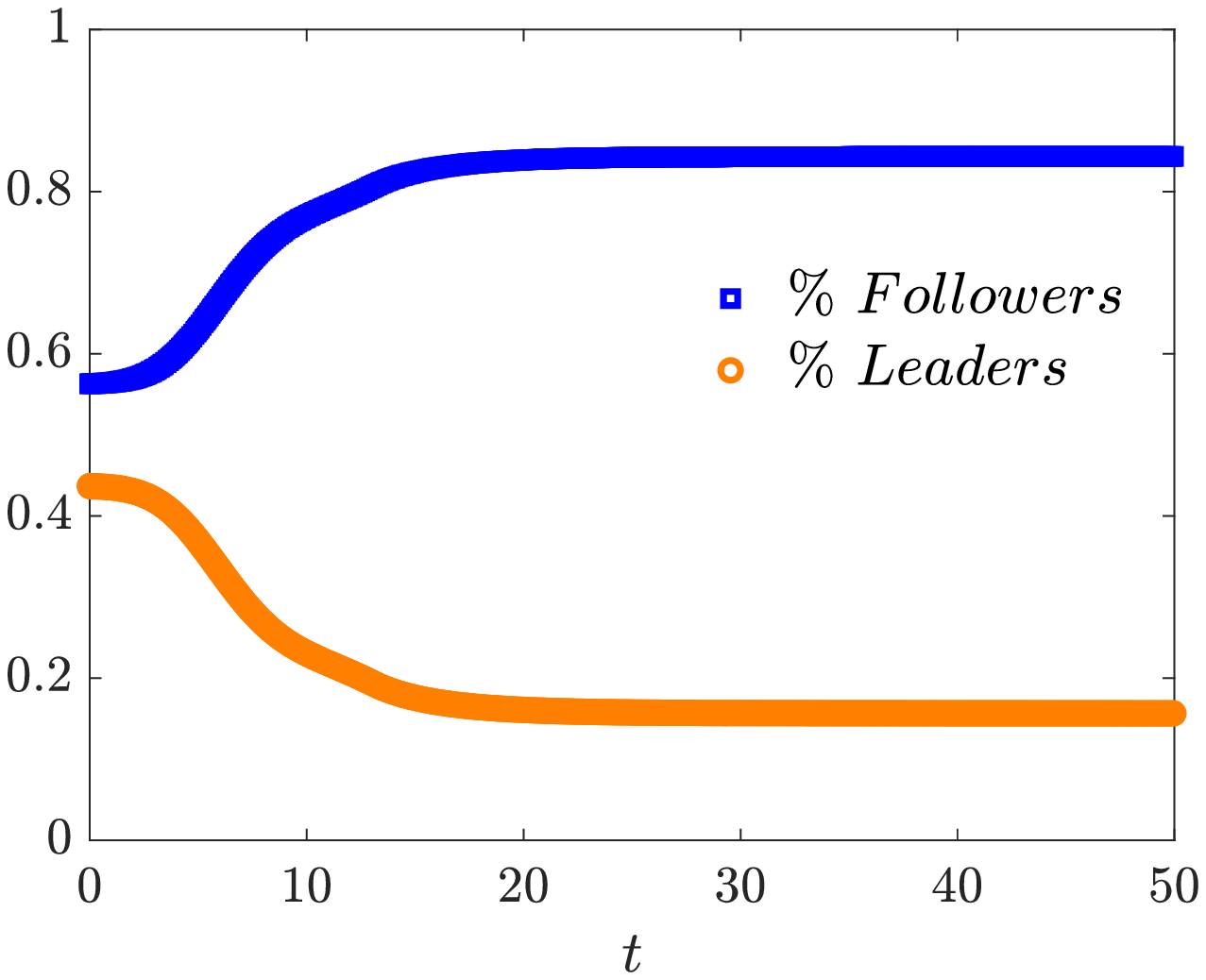}
		\\
		\includegraphics[width=0.325\linewidth]{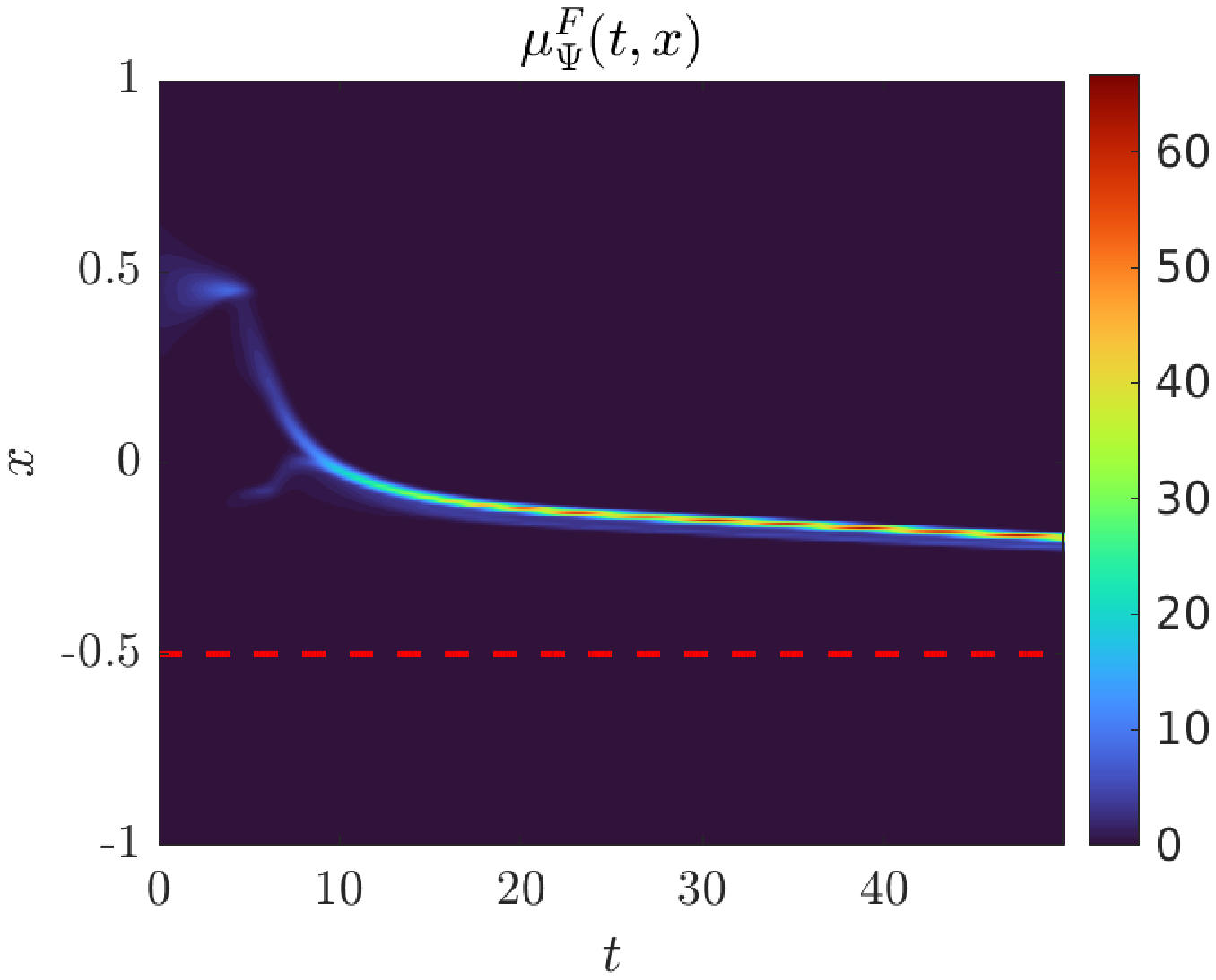}
		\includegraphics[width=0.325\linewidth]{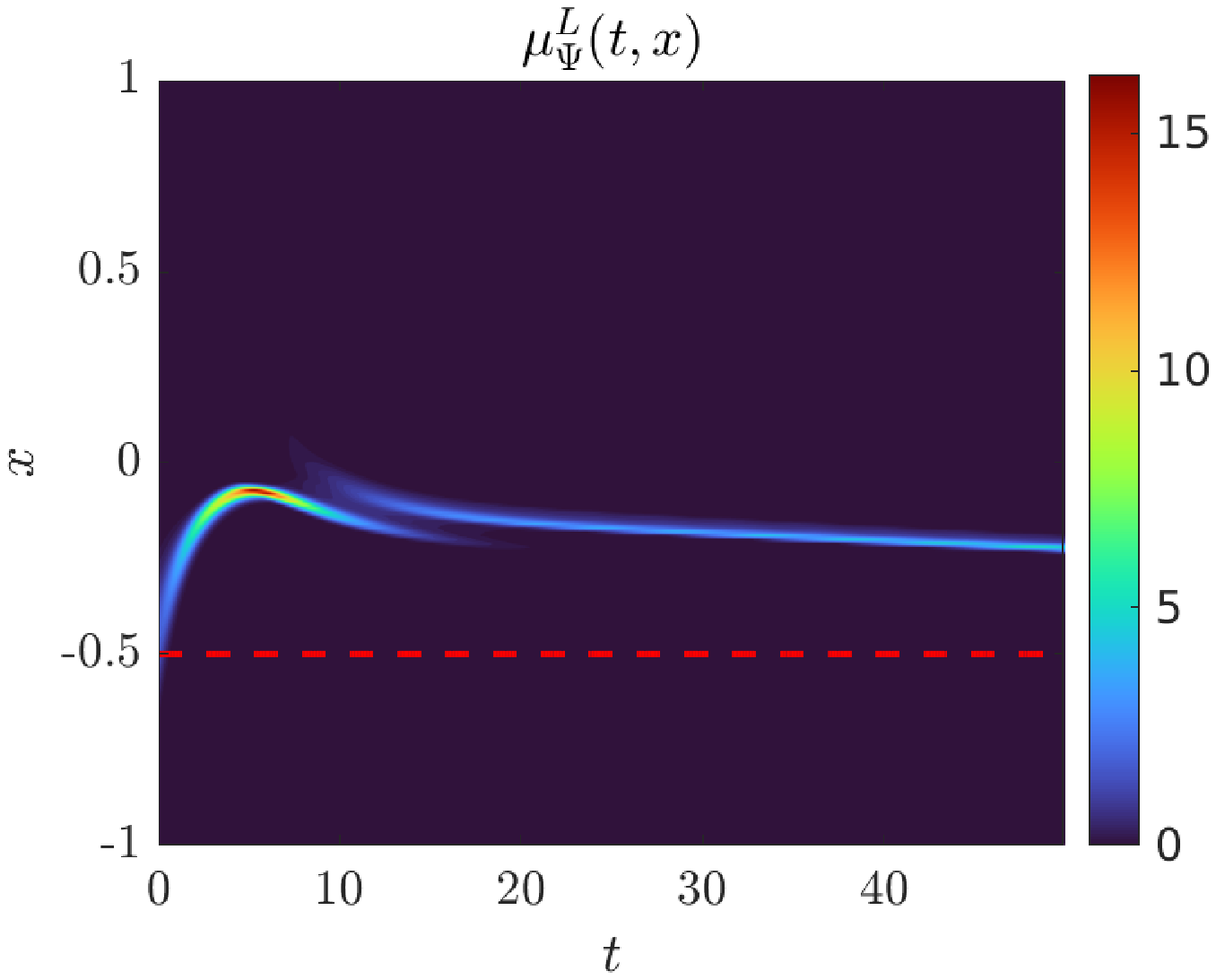}
		\includegraphics[width=0.275\linewidth]{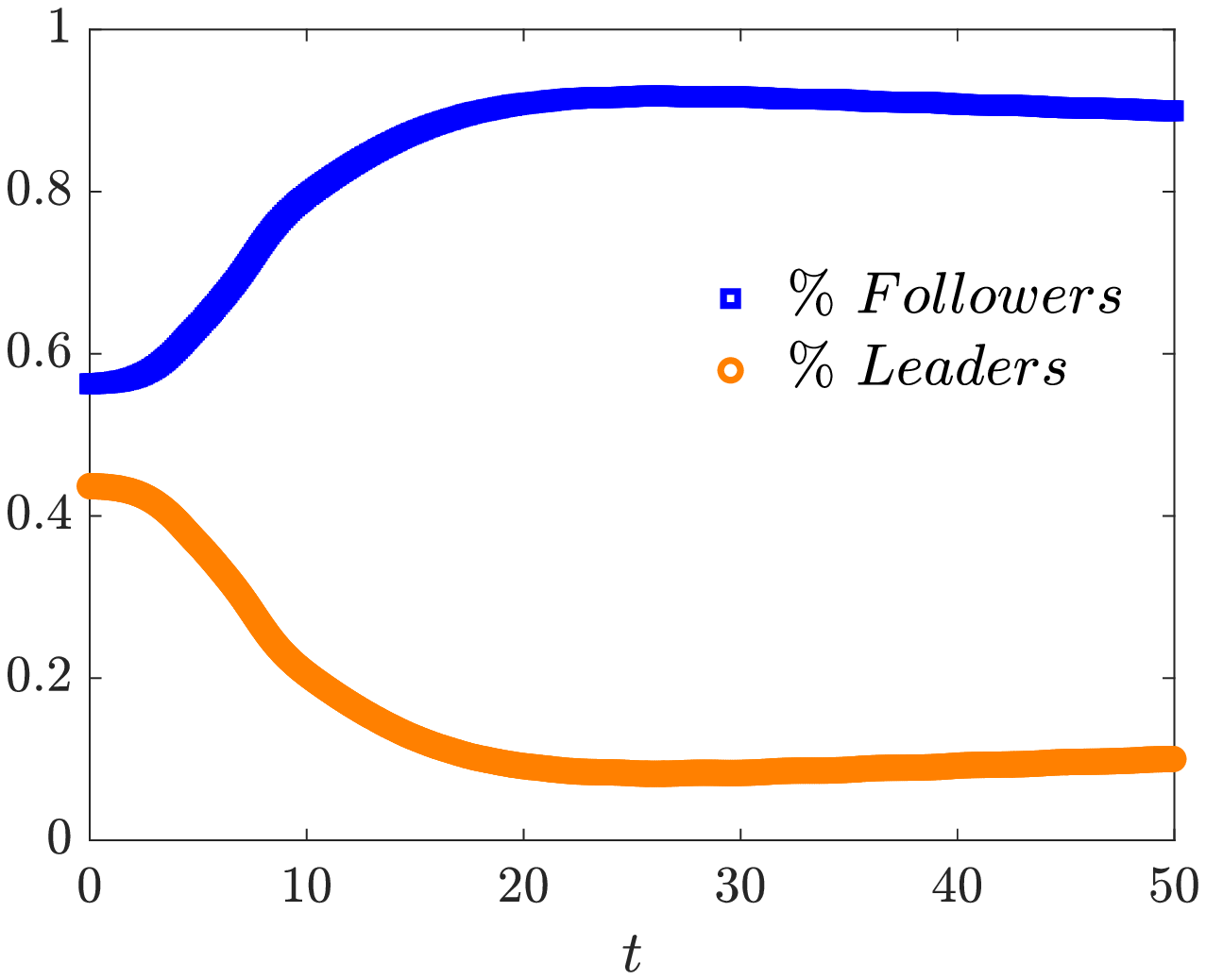}
		\caption{{\em Test 1.}~ Upper row: uncontrolled case; bottom row: controlled case.
			Left and central plots depict marginals in the time-space domain. Right plot shows the percentage of population associated with leader or follower populations. Right plot shows the percentage of population associated with leader and follower as functions of time.} \label{fig:evo_Test1}
	\end{center}
\end{figure}


\subsection{Two-leader game}\label{sec:2LG}

A rather natural extension of the situation considered in Section~\ref{sec:LF} consists in studying the interaction between three different populations: one of followers, still denoted with the label~$F$, and two of leaders, denoted by~$L_{1}$ and $L_{2}$, respectively, competing for gaining consensus among the followers and working to attract them towards their own objectives. A policy maker may choose to promote one of the two populations of leaders by favoring the interactions among these leaders and the followers. We discuss here how to model such a scenario within our analytical setting.

The set~$U$ consists now of three labels $U\coloneq \{ F, L_{1}, L_{2}\}$ endowed with the distance $d \colon U \times U \to [0, +\infty)$ defined as
\begin{align*}
&d(F, L_{1}) = d(L_{1}, F) = d(F, L_{2}) = d(L_{2}, F) = d(L_{1}, L_{2}) = d(L_{2}, L_{1}) \coloneq 1\,,\\
& d(F, F) = d(L_{1}, L_{1}) = d(L_{2}, L_{2}) \coloneq 0\,.
\end{align*}
The space of probability measures~$\sP(U)$ is identified with the simplex of~$\R^{3}$
\begin{displaymath}
\bigg\{ \lambda = (\lambda_{F}, \lambda_{L_{1}}, \lambda_{L_{2}}) \in \R^{3}: \, \lambda_{\bullet} \geq 0 \text{ for } \bullet \in U, \, \sum_{\bullet \in U} \lambda_{\bullet} = 1\bigg\}
\end{displaymath}
or, equivalently, to the subset~$\Delta$ of~$\R^{2}$
\begin{displaymath}
\Delta \coloneq \{\lambda =  (\lambda_{L_1}, \lambda_{L_2}) \in \R^{2}: \, 0 \leq \lambda_{L_1}, \lambda_{L_2} \leq 1, \, \lambda_{L_1} + \lambda_{L_2} \leq 1\}\,.
\end{displaymath}
Hence, in a discrete model the scalar values $\lambda_{L_{1}, i}, \, \lambda_{L_{2}, i}$ stand for the probability of the $i$-th particle of being an $L_{1}$-leader and an $L_{2}$-leader, respectively. Clearly, $\lambda_{F, i} = (1 - \lambda_{L_{1},i} - \lambda_{L_{2},i})$ represents the probability of being a follower. 

Assuming that the policy maker wants to promote the goals of the leaders $L_{1}$, the influence of the controls on the populations dynamics may be tuned by the function $h_{\Psi}(x, \lambda) = \overline{h}(\lambda_{L_1})$ for a bounded non-negative Lipschitz function $\overline{h} \colon[0,1]\to [0, 1]$ such that $\overline{h} (\lambda_{L_1}) = 1$ for $\lambda_{L_1}$ close to~$1$ and $\overline{h}(\lambda_{L_1}) = 0$ for $\lambda_{L_1}$ close to~$0$. 
Considering a cost function~$\phi$ of the form~\eqref{e:cost-phi}, for instance, the control~$u \in \R^{d}$ will act only on the $L_{1}$-leaders, as a consequence of Proposition~\ref{p:N-particles}.

Given $\Psi\in\cP(\R^d\times \Delta)$ and a Lipschitz continuous function $f = (f_{L_1}, f_{L_2}) \colon \Delta\to \Delta$ such that $f_{L_j}(\lambda) = f_{L_j}(\lambda_{L_j})$, for $j=1, 2$, we define the followers and leaders distributions as
\begin{equation*}\label{I7-2}
\begin{split}
&\mu^{L_{j}}_\Psi(B)\coloneqq\int_{B \times \Delta} f_{L_j}(\lambda_{L_j}) \,\mathrm{d}\Psi(x, \lambda), \qquad \text{for $j=1, 2$}\,, 
\\
&
\mu^F_\Psi(B)\coloneqq\int_{B \times\Delta} (1 - f_{L_1}(\lambda_{L_1}) - f_{L_2}(\lambda_{L_2})) \,\mathrm{d}\Psi(x, \lambda)
\end{split}
\end{equation*}
for every Borel subset $B$ of~$\mathbb{R}^d$. In the discrete setting, the leaders and followers distributions are
\begin{align*}
& \mu_{\Psi^N}^{L_{j}}(B)=\frac1N\sum_{i\,:\,x_i\in B}  f_{L_j}(\lambda_{L_{j},i}), \qquad \text{for $j=1, 2$},
\\
& \mu_{\Psi^N}^F(B)=\frac1N\sum_{i\,:\,x_i\in B}( 1 - f_{L_1}(\lambda_{L_{1},i}) - f_{L_2}(\lambda_{L_{2},i}) )\,.
\end{align*}
A possible choice for $f_{L_j}$ is any Lipschitz regularization of the indicator function of the set $\{\lambda_{L_j} \ge m\}$ with $m> \frac12$ and such that $f_{L_j}(\lambda_{L_j}) = 0$ for $\lambda_{L_j} \leq \frac12$, compatible with the request that~$f$ maps~$\Delta$ in $\Delta$. 
We further notice that the choice $f_{L_j}(\lambda_{L_j})=\lambda_{L_j}$ is still allowed with the same interpretation given in~\eqref{I7-discrete}.

The velocity field~$v_{\Psi}(x, \lambda)$ in~\eqref{e:vel-ex} can be easily modified for the current scenario by setting, for instance,
\begin{equation*}
v_{\Psi}(x, \lambda) \coloneq  v^{L_1}_{\Psi}(x, \lambda)+v^{L_2}_{\Psi}(x, \lambda)+v^{F}_{\Psi}(x, \lambda),
\end{equation*}
where
\begin{equation}\label{e:vel-ex2}
v^{\star}_{\Psi}(x, \lambda)\coloneqq f_\star(\lambda) \int_{\R^d\times\Delta} \sum_{\bullet\in\{F,L_1,L_2\}} K^{\star\bullet}(x-x') f_\bullet(\lambda') \,\de\Psi(x',\lambda')
\end{equation}
under the additional position that $f_{F}(\lambda)=1-f_{L_1}(\lambda_{L_1})-f_{L_2}(\lambda_{L_2})$.
The transition $\T_{\Psi}(x, \lambda)$ is now given by
\begin{equation}\label{e:trans-ex2}
\!\!\!\!\!\!\!\!\!\mathcal{T}_{\Psi}(x, \lambda) \coloneq \!\!\left(\begin{array}{ccc}
\!\!- \alpha_{L_{1}L_{1}}(x, \Psi) &\!\! \alpha_{L_{1}L_{2}}(x, \Psi) & \!\!\alpha_{L_{1}F}(x, \Psi) \\
\!\!\alpha_{L_{2}L_{1}}(x, \Psi) & \!\!-\alpha_{L_{2}L_{2}}(x, \Psi) & \!\!\alpha_{L_{2}F}(x, \Psi) \\
\!\!\alpha_{FL_{1}}(x, \Psi) & \!\!\alpha_{FL_{2}}(x, \Psi) & \!\!- \alpha_{FF}(x, \Psi)
\end{array}\right)
\!\!\!
\left(
\begin{array}{ccc}
\!
g_{L_1}(\lambda_{L_1}) \\
\!
g_{L_2}(\lambda_{L_2}) \\
\!
1 - g_{L_1}(\lambda_{L_1}) - g_{L_2}(\lambda_{L_2})
\end{array}\right)
\end{equation}
where the transition rates $\alpha$ are defined as in~\eqref{e:alpha} with the obvious modifications, and $g_{L_j}$ have similar properties as $f_{L_j}$. To comply with~$(\T_{0})$, we need (see~\cite[Proposition~5.1]{MorSol19})
\begin{equation}\label{e:stoc-matrix}
\alpha_{\bullet\bullet} (x, \Psi) = \sum_{\substack{\star \in U\\ \star \neq \bullet}} \alpha_{\star \bullet} (x, \Psi)\,,
\end{equation}
in view of which we can write (omitting the dependence on $x,\Psi$)
\begin{equation*}
\T_\Psi(x,\lambda)=\left(\begin{array}{cc}
-(\alpha_{L_1L_1}+\alpha_{L_1F}) & \alpha_{L_1L_2}-\alpha_{L_1F} \\
\alpha_{L_2L_1}-\alpha_{L_2F} & -(\alpha_{L_2L_2}+\alpha_{L_2F})
\end{array}\right)
\left(
\begin{array}{ccc}
g_{L_1}(\lambda_{L_1}) \\
g_{L_2}(\lambda_{L_2})
\end{array}\right)+
\left(
\begin{array}{ccc}
\alpha_{L_1F} \\
\alpha_{L_2F}
\end{array}\right)
\end{equation*}
in order to determine the evolution of the two independent parameters $\lambda_{L_1}$ and $\lambda_{L_2}$\,.

Since the policy maker promotes the $L_{1}$-leaders, the Lagrangian should penalize the distance of the population~$L_{1}$ from their goal. As in~\eqref{e:lag-1}, this is done by considering a function of the form
\begin{displaymath}
\widetilde{\mathcal{L}}_1 (x, \lambda) \coloneq \theta(\lambda_{L_1}) | x - \overline{x}|^2,
\end{displaymath}
where $\overline{x} \in \R^{d}$ denotes the desired goal of the $L_{1}$-leaders and $\theta \colon [0, 1] \to [0, 1]$ is a continuous function which is $0$ close to $0$ and $1$ close to~$1$. 
With the same idea, the second term~\eqref{e:lag-2} is modified in order to penalize only the distance of the $L_{1}$-leaders from the barycenter of the followers
\begin{equation*}
\widetilde{\cL}_2(x,\lambda,\Psi)=\theta(\lambda_{L_1}) \bigg|x-\fint_{\R^d} x'\,\de\mu_\Psi^F(x')\bigg|^2.
\end{equation*}
Again, we notice that $\widetilde{\cL}_{2}$ is continuous as long as $\mu_\Psi^F (\R^{d})>0$. Finally, the Lagrangian~$\widetilde{\cL}$ of the system has the same structure of~\eqref{e:lag-3}, i.e., $\widetilde{\cL} = \alpha \widetilde{\cL}_{1} + (1 - \alpha) \widetilde{\cL}_{2}$ for a parameter~$\alpha \in [0, 1]$ to be tuned.

\subsubsection{Test~2: Opinion dynamics with competing leaders} 
We consider the opinion dynamics presented in Test 1,  where the opinion variable is $x\in[-1,1]$ with $\{\pm1\}$  two opposite opinions. We introduce two populations of leaders competing over the consensus of the followers.
The first population of leaders $L_1$ has a radical attitude aiming to mantain their position, and their strategy is driven by the policy maker. Instead, the population  $L_2$ is characterized by a populistic attitude, without the intervention of an optimization process: they are willing to move from their position in order to have a broader range of interaction with the remaining agents.

The interaction field $v_{\Psi}$ \eqref{e:vel-ex2} is characterized by bounded confidence kernels with the following structure
\begin{align}
K^{\star \bullet}(x-x') = \chi_\varepsilon(\{|x-x'|\leq \kappa_{\star\bullet}\}),\qquad\text{for $\star,\bullet\in\{F,L_1,L_2\}$,}
\end{align}
where $\eps\geq0$ is a regularization parameter for the characteristic function $\chi$ and $\kappa_{\bullet\star}$ represent the confidence intervals with the following numerical values,
\begin{align*}
\kappa_{FF} &= 0.35,\quad\kappa_{FL_1} = 0.5,\quad \kappa_{FL_2}= 0.5,\quad\kappa_{L_1F} =\kappa_{L_2F}=0,\cr
\kappa_{L_1L_1} &= 0.4,\quad \kappa_{L_2L_2} = 0.8,\quad\kappa_{L_1L2}=\kappa_{L_2L2}= 0.
\end{align*}
The weighting functions $f_{L_1},f_{L_2}$ are such that $ f_{L_j}(\lambda_{L_j})\equiv 1-\ell(\lambda_{L_j})$ with
\begin{align}\label{e:ell-g}
\ell(\lambda) = \dfrac{e^{C(\lambda-\bar \lambda)}}{1+e^{C(\lambda-\bar \lambda)}}, \quad C = 10^3,\quad \bar \lambda = 0.5.
\end{align}
The transition operator $\mathcal{T}_\Psi(x,\lambda)$ in \eqref{e:trans-ex2} is identified by the following quantities
\begin{align*}\label{coeff}
\alpha_{L_jF}(x,\Psi) &= a_{L_jF}\left(1-\mathcal D_L(x,\Psi)\right),\qquad \alpha_{FF}(x,\Psi)= \sum_{j=1,2}\alpha_{L_jF}(x,\Psi)\cr
\alpha_{L_jL_j}(x,\Psi)& = a_{L_jL_j}\left(1-\mathcal D_F(x,\Psi)\right),
\qquad
\alpha_{FL_j}(x,\Psi) = \alpha_{L_jL_j}(x,\Psi),
\end{align*}
and $\alpha_{L_1L_2}(x,\Psi)=\alpha_{L_2L_1}(x,\Psi)=0$, coherently with respect to \eqref{e:stoc-matrix}.
Functions $\mathcal D_F$ and $ \mathcal D_L$ represent the concentration of followers and the total concentration of leaders at position~$x$, defined similarly to \eqref{e:conc-ex1}. We use the following parameters
\[
a_{L_jF}=0.015,\quad a_{L_jL_j} = 0.025,\qquad j = {1,2}.
\]
The weighting function $g_j(\lambda)$ is defined as in \eqref{e:ell-g} with $C=20$ and $\bar \lambda = 0.5$.
Finally, the cost functional is defined by the Lagrangian defined in \eqref{e:lag-3}, with $\lambda = \lambda_{L_1}$ since only the radical leaders are controlled. Radical leaders aim to steer followers towards $\bar x=-0.75$ and keeping track of followers average position with weighting parameter $\alpha = 0.85$ and $\theta(\lambda) = 1-\ell(\lambda)$. We account for quadratic penalization of the control in \eqref{e:cost-phi} by choosing $\gamma = 2$.

\begin{figure}[!ht]
	\begin{center}
		\includegraphics[width=0.375\linewidth]{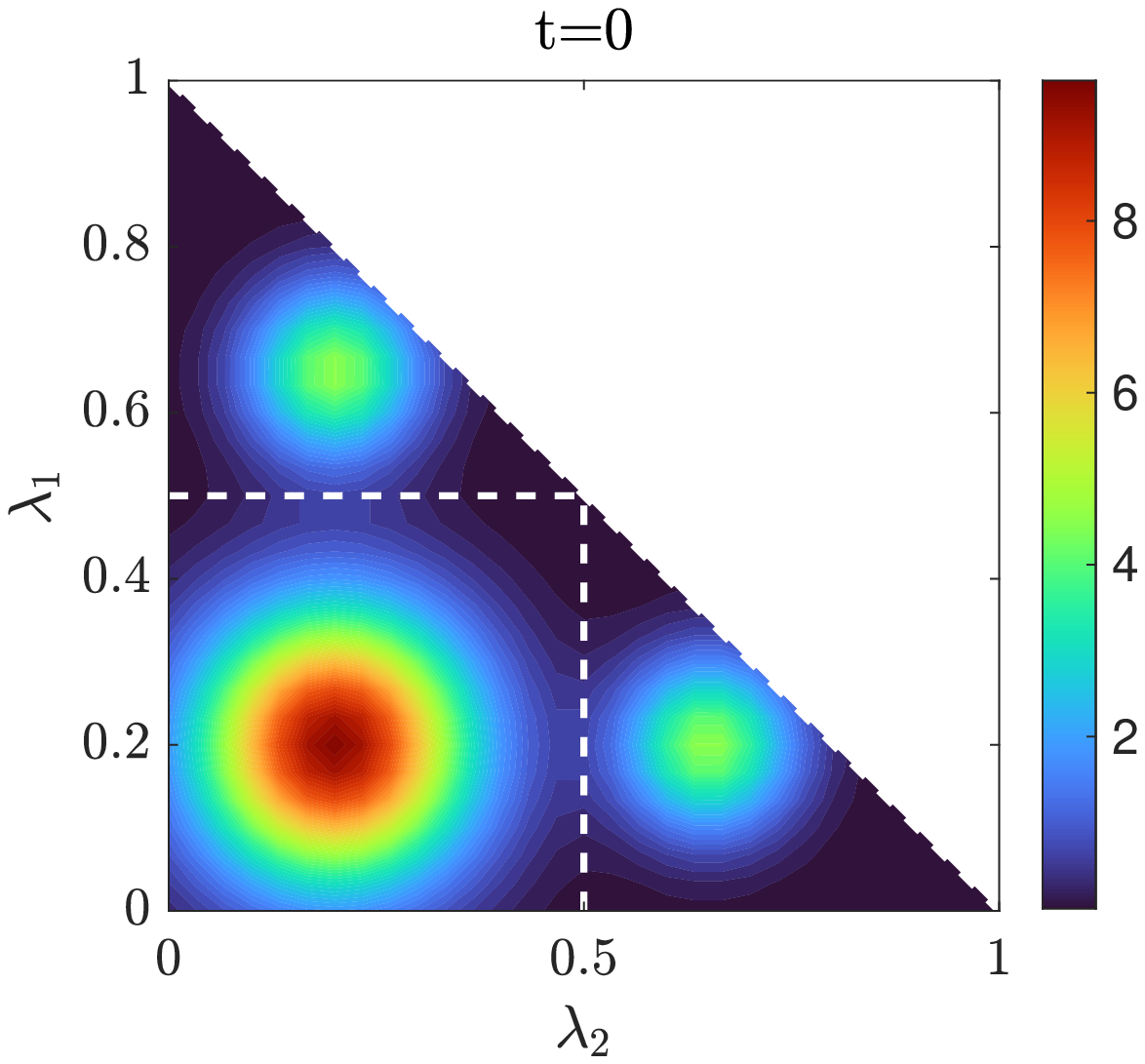}
		\hspace{+0.5cm}
		\includegraphics[width=0.375\linewidth]{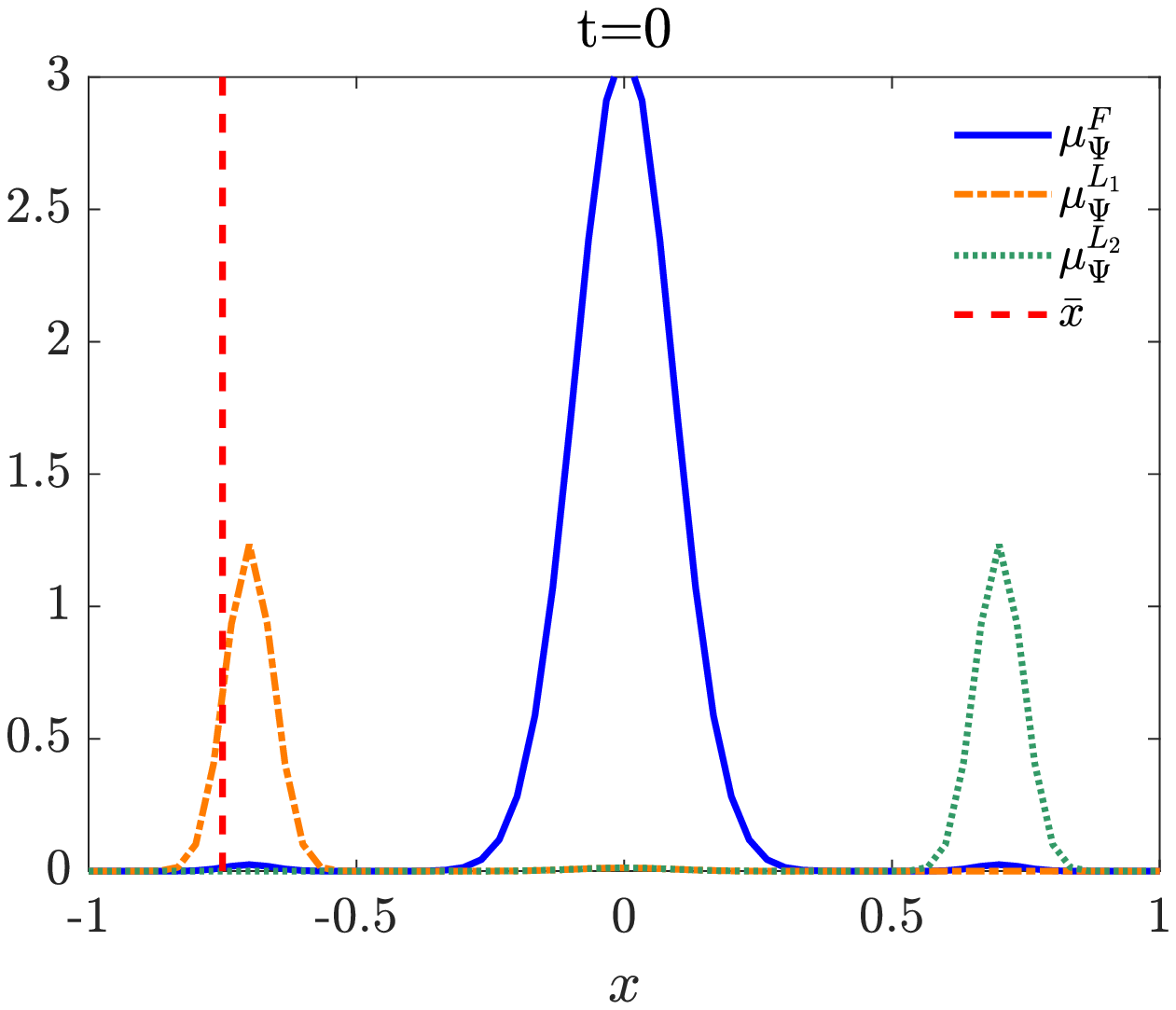}
	\end{center}
\caption{{\em Test 2}. Initial distribution $\Psi_0(x,\blambda)$ and the marginals associated with the opinion space $x$, $\mu^{F}_\Psi(t,x)$, $\mu^{L}_\Psi(t,x)$, and the label space $(\lambda_{L_{1}},\lambda_{L_{2}})$, $\nu_\Psi(t,\blambda)$. Red dashed line corresponds to the target opinion for $L_1$ leaders.}\label{fig:init_data_Test2}
\end{figure}

In Figure~\ref{fig:init_data_Test2}, we report the choice of the initial data, and the marginals $\mu^{F}_\Psi(t,x)$, $\mu^{L_1}_\Psi(t,x)$ and $\mu^{L_2}_\Psi(t,x)$ relative to the opinion space, and to the label space $\nu_\Psi(t,\blambda)$,  defined as follows
\[
\Psi_0(x,\blambda) \coloneqq C_0 \left(\exp\left\{-\frac{(x-x_F)^2}{\sigma_{x,F}^2}-\frac{|\blambda-\bar\blambda_F|^2}{{\sigma_{\lambda,F}^2}}\right\}+\sum_{j=1,2}\exp\left\{-\frac{(x-x_j)^2}{{\sigma_{x,j}^2}}-\frac{|\blambda-\bar\blambda_{j}|^2}{{\sigma_{\lambda,j}^2}}\right\}\right),
\]
where here $\blambda=(\lambda_{L_1},\lambda_{L_2})$, the parameters are ${\sigma_{\lambda,F}^2}=1/40,\sigma_{\lambda,j}^2=1/100,$ $\sigma_{x,j}^2 =1/60$,~$\sigma_{x,F}^2=1/250$, $\bar \blambda_F=(0.2,0.2), \bar \blambda_1=(0.2,0.65),\bar \blambda_2=(0.65,0.2)$, $x_F=0$, $x_1=-0.65,x_2=0.65$, and~$C_0$ is a normalizing constant.

Figure \ref{fig:evo_noctrl_Test2} reports from left to right four frames of the marginals up to time $t=\{5,15, 27.5, 50\}$, without control. Without the action of a policy maker, the majority of followers 
are driven close to the initial position of populist leaders $L_2$, who interact with a wider portion of agents.
 In Figure~\ref{fig:evo_ctrl_Test2}, the control action of the policy maker is activated resulting in a different distribution of the followers: while the populistic leaders retain some capability of attraction, the portion of the followers which is driven towards the target position $\bar x$ of the radical leaders $L_1$ is considerably larger than in the uncontrolled case.

\begin{figure}[!ht]
	\begin{center}
		\includegraphics[width=0.225\linewidth]{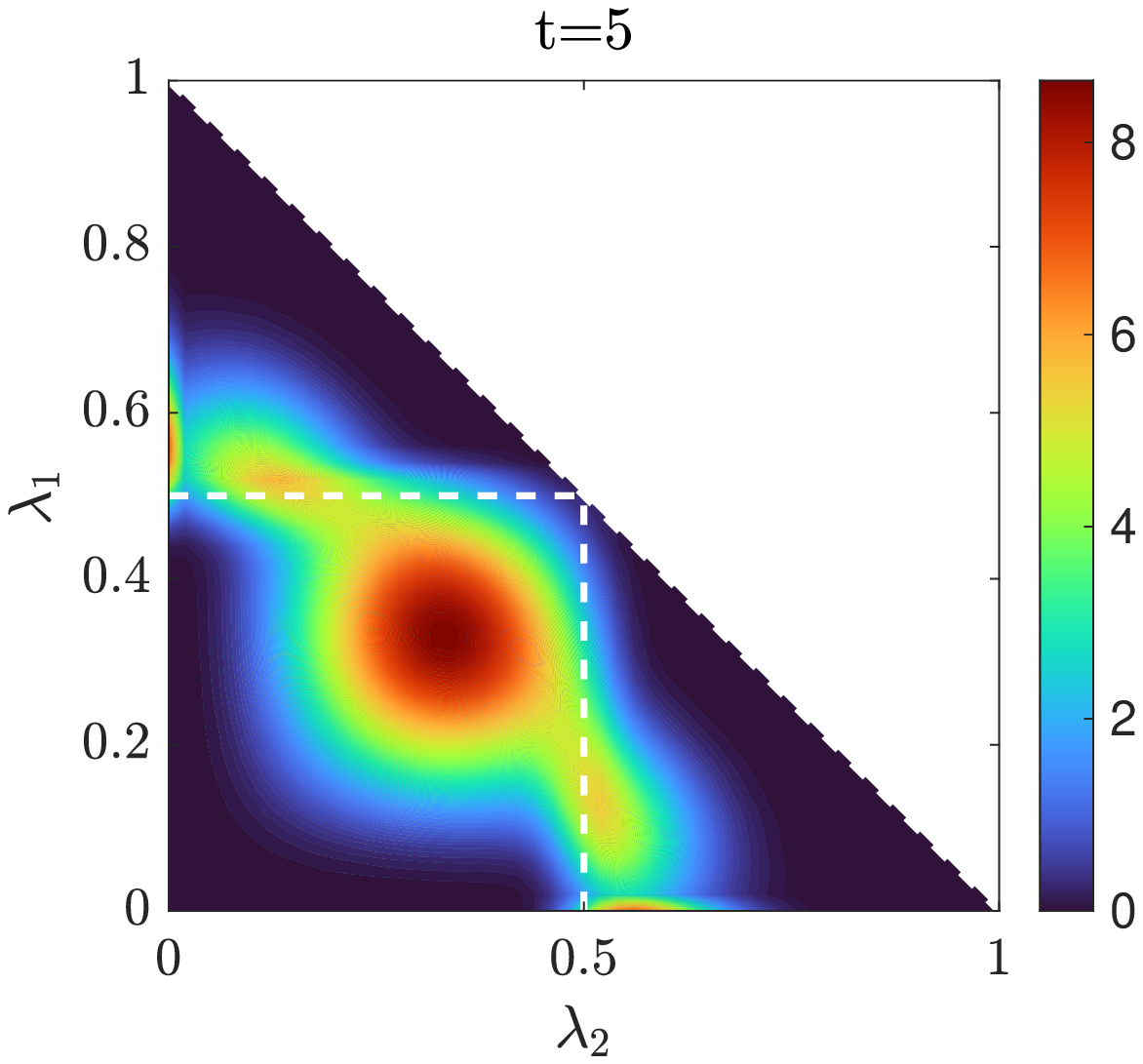}
		\includegraphics[width=0.225\linewidth]{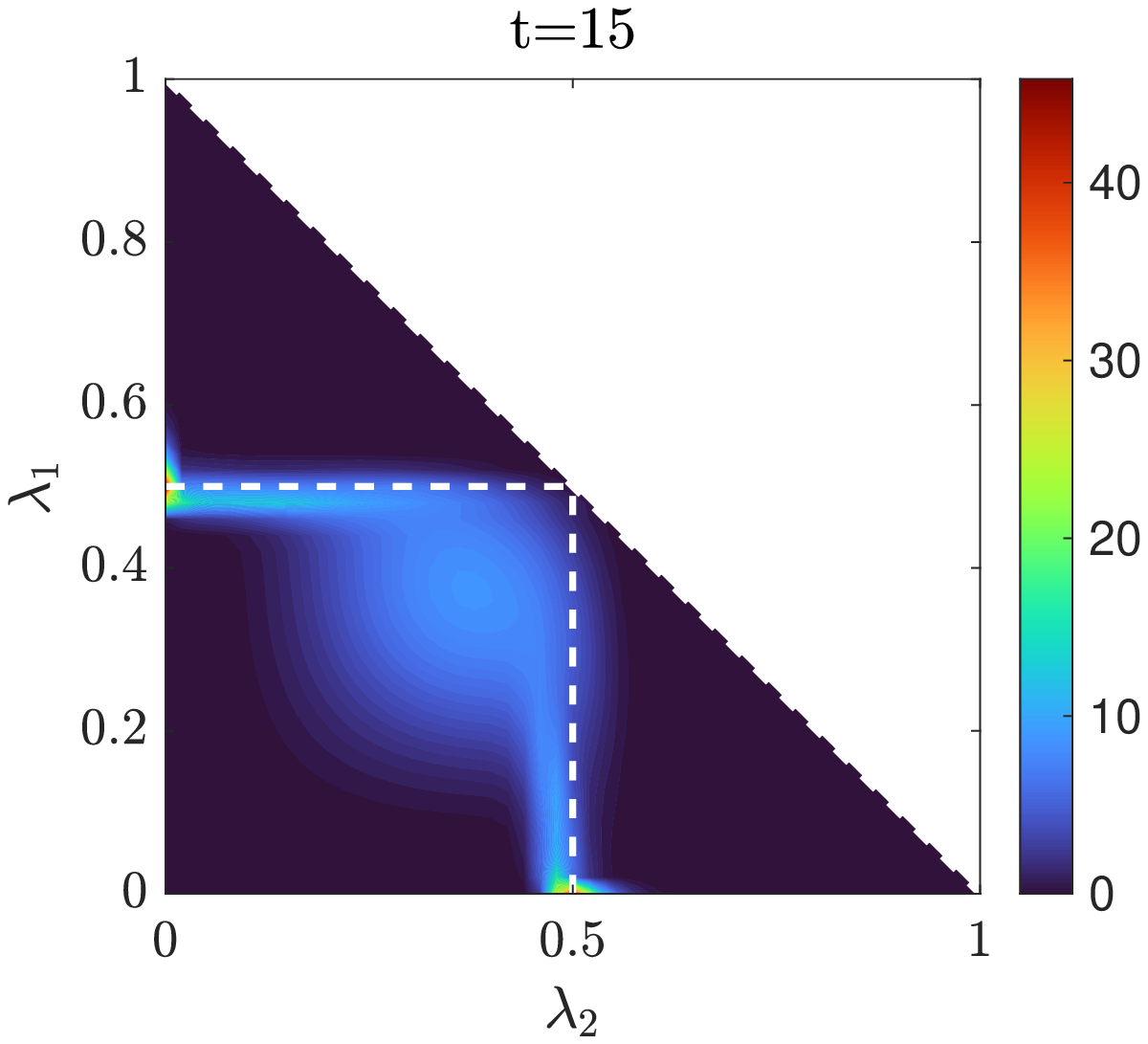}
		\includegraphics[width=0.225\linewidth]{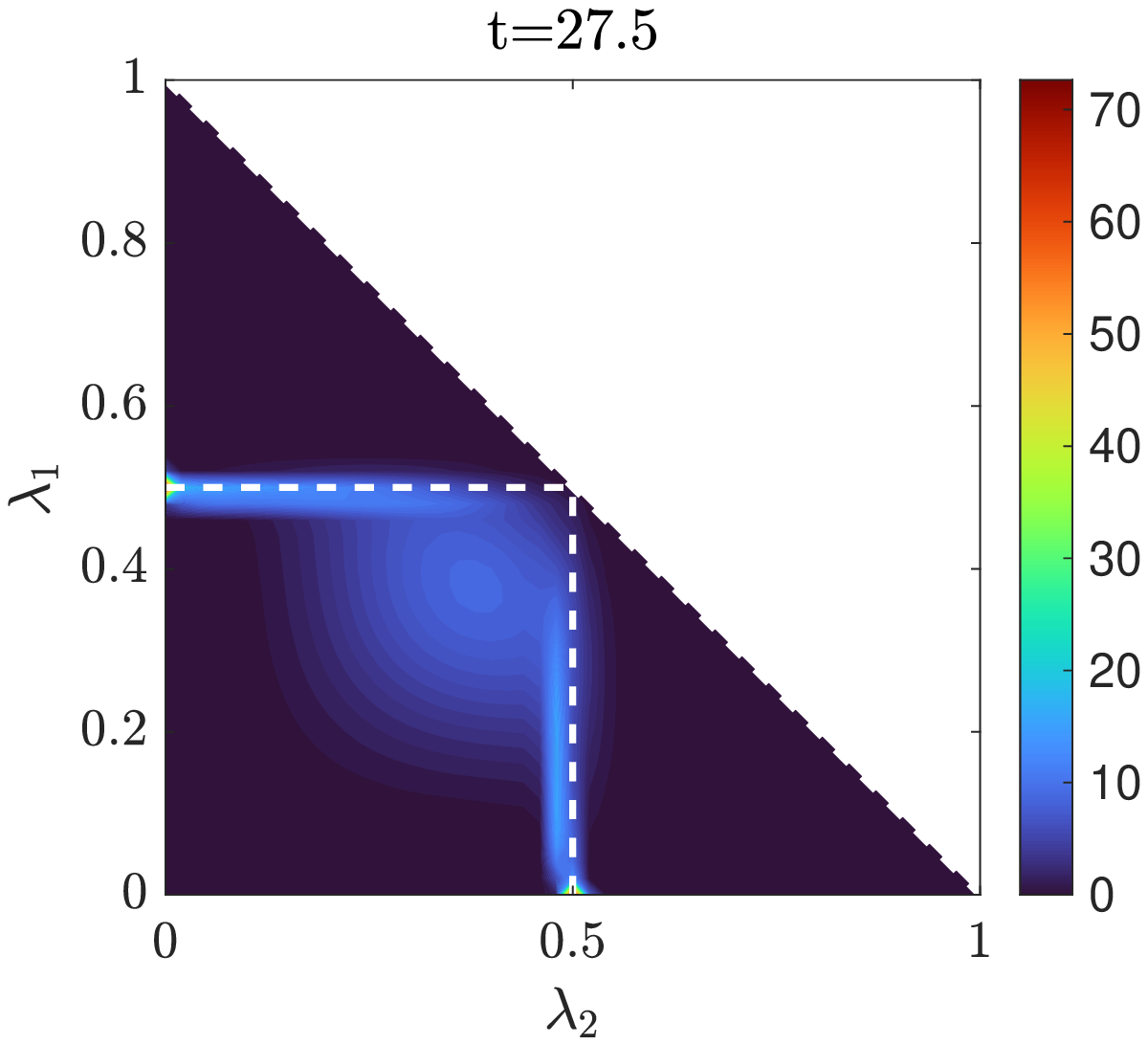}
		\includegraphics[width=0.225\linewidth]{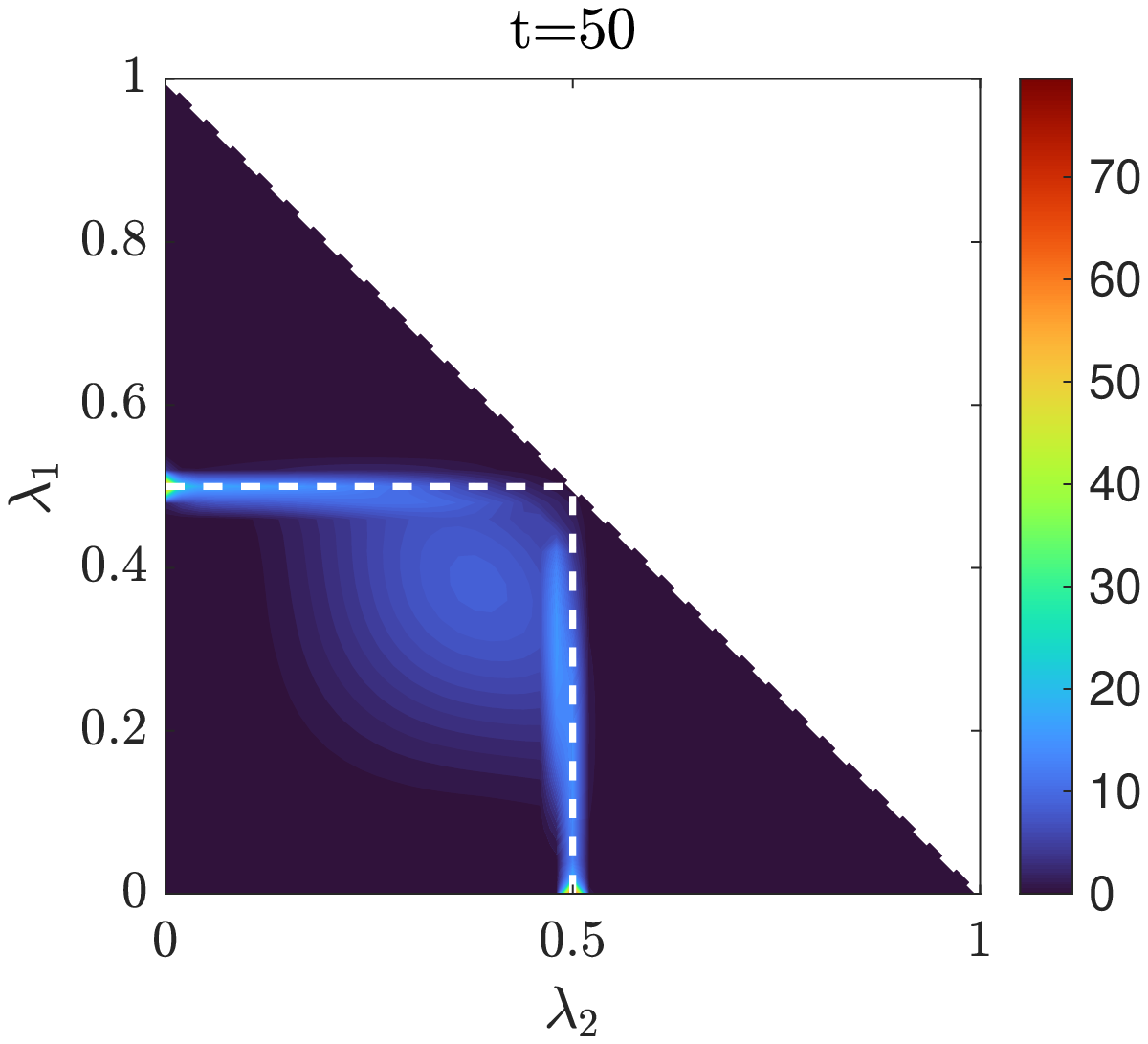}
		\\
\includegraphics[width=0.225\linewidth]{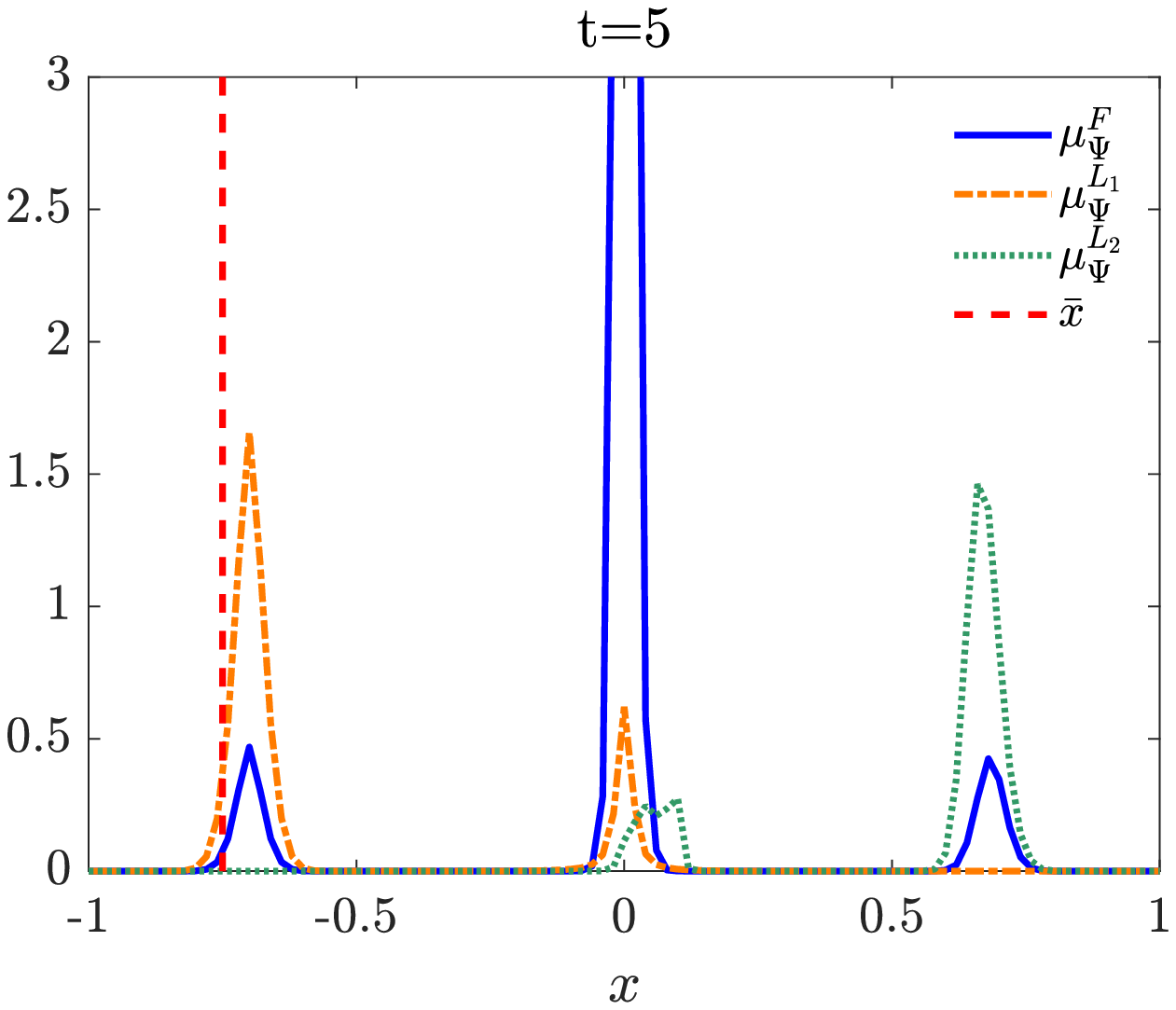}
\includegraphics[width=0.225\linewidth]{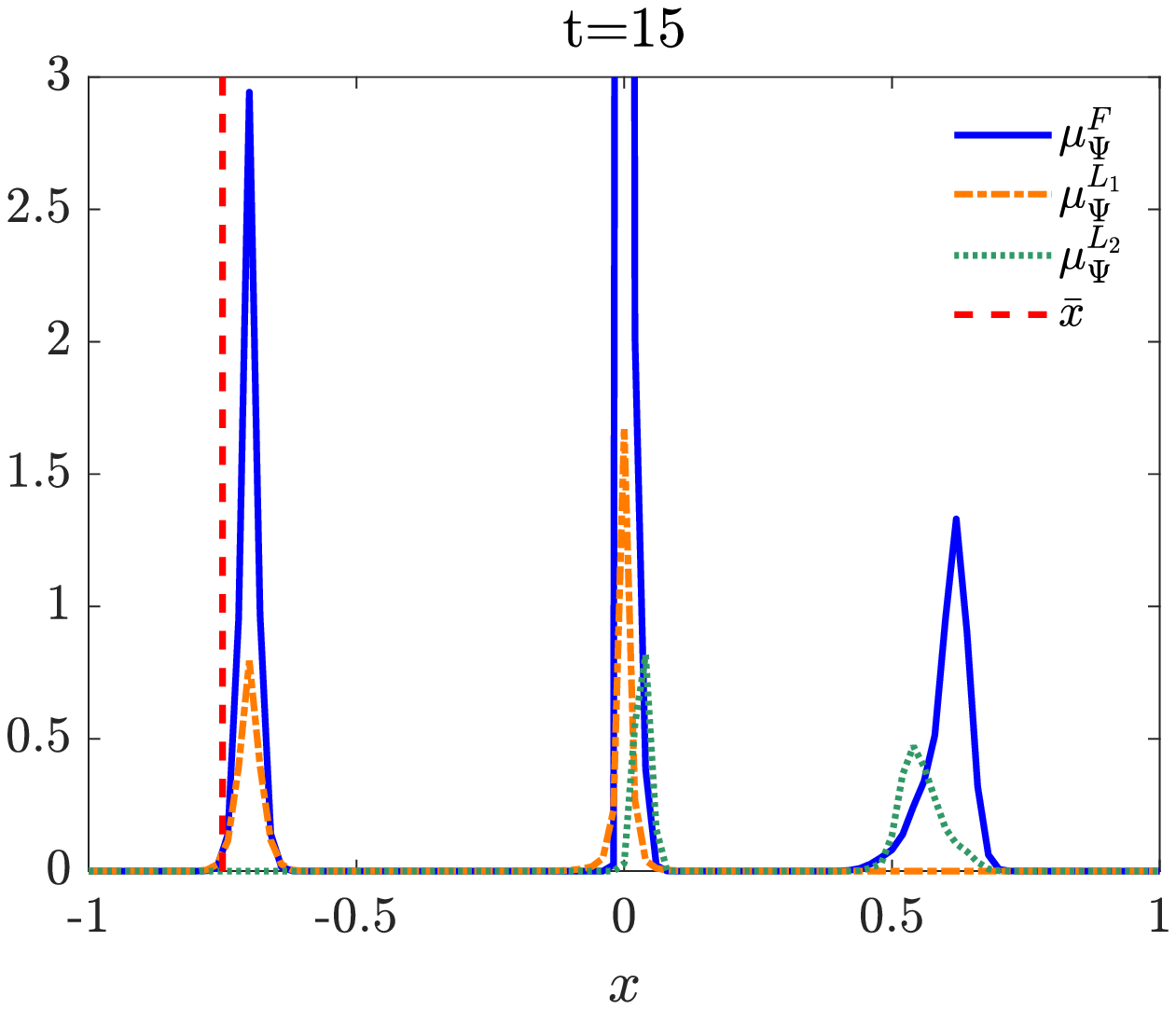}
\includegraphics[width=0.225\linewidth]{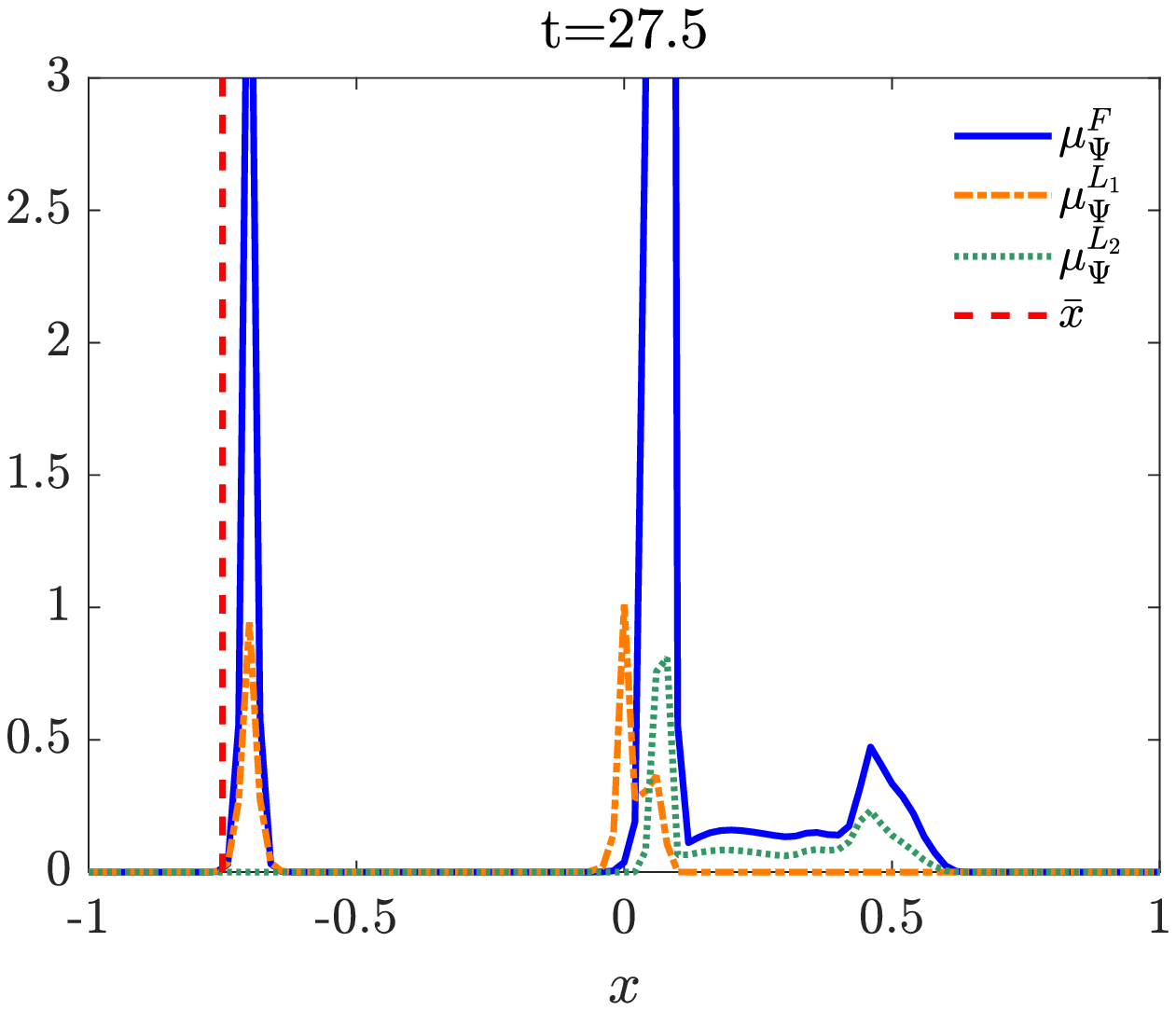}
\includegraphics[width=0.225\linewidth]{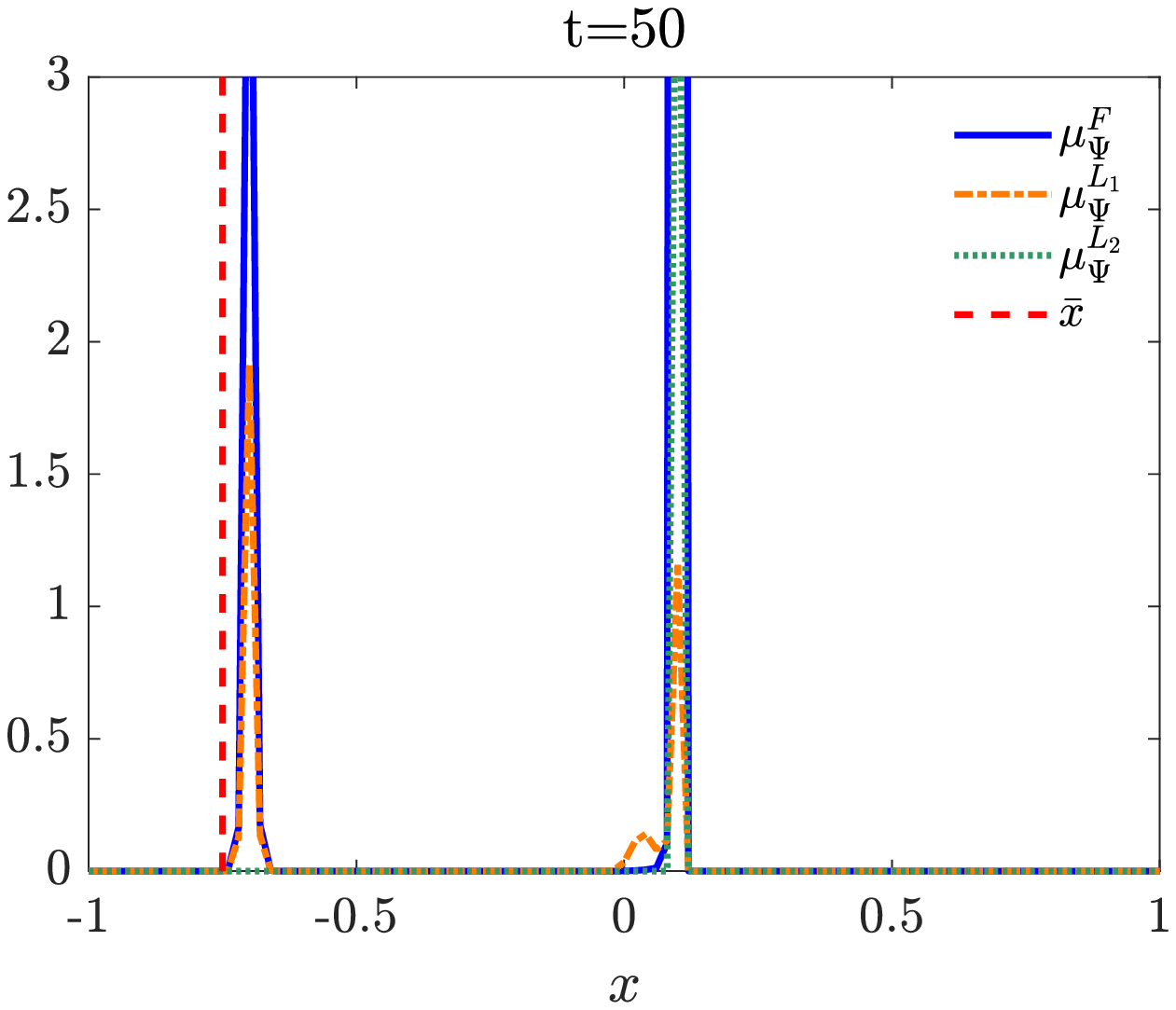}
		\caption{{\em Test 2.}~Evolution of the marginals without control for time frames   $t=\{5,15,27.5, 50\}$. Bottom row depicts $\mu^{F}_\Psi(t,x),$ $\mu^{L_1}_\Psi(t,x)$ and $\mu^{L_2}_\Psi(t,x)$; top row  shows $\nu_\Psi(t,\blambda)$.} \label{fig:evo_noctrl_Test2}
	\end{center}
\end{figure}

\vspace{5mm}

\begin{figure}[!ht]
	\begin{center}
		\includegraphics[width=0.225\linewidth]{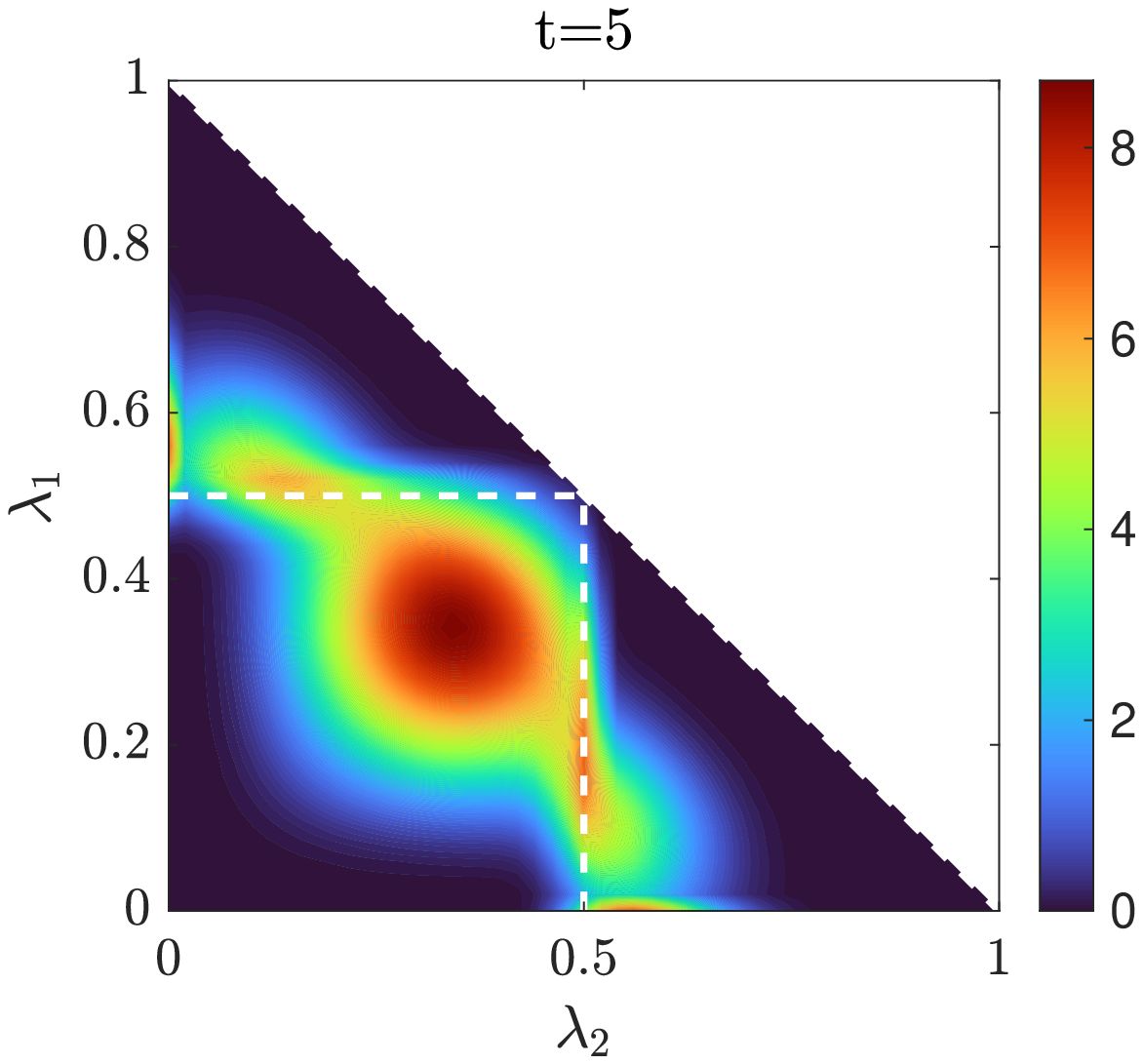}
	\includegraphics[width=0.225\linewidth]{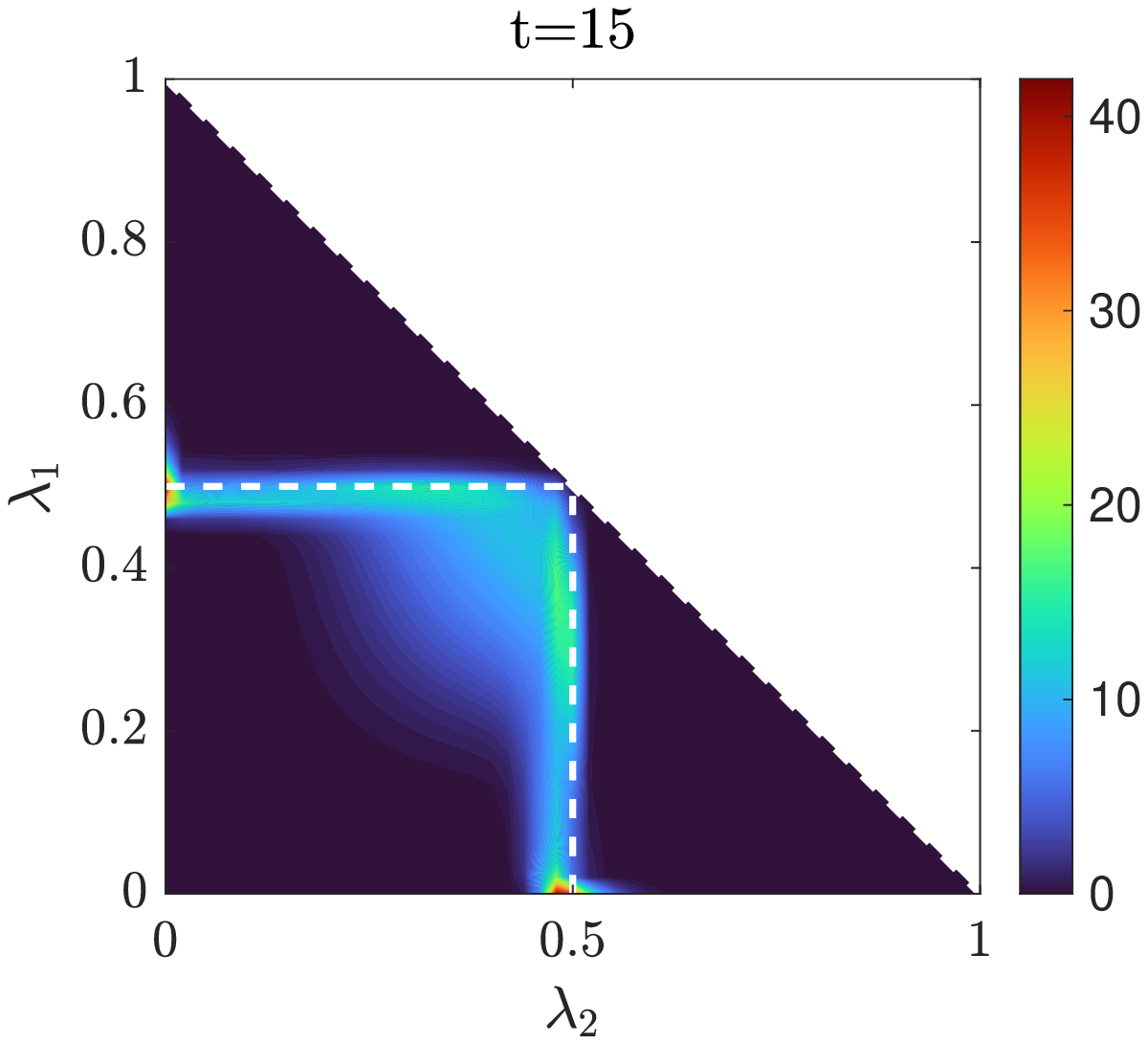}
	\includegraphics[width=0.225\linewidth]{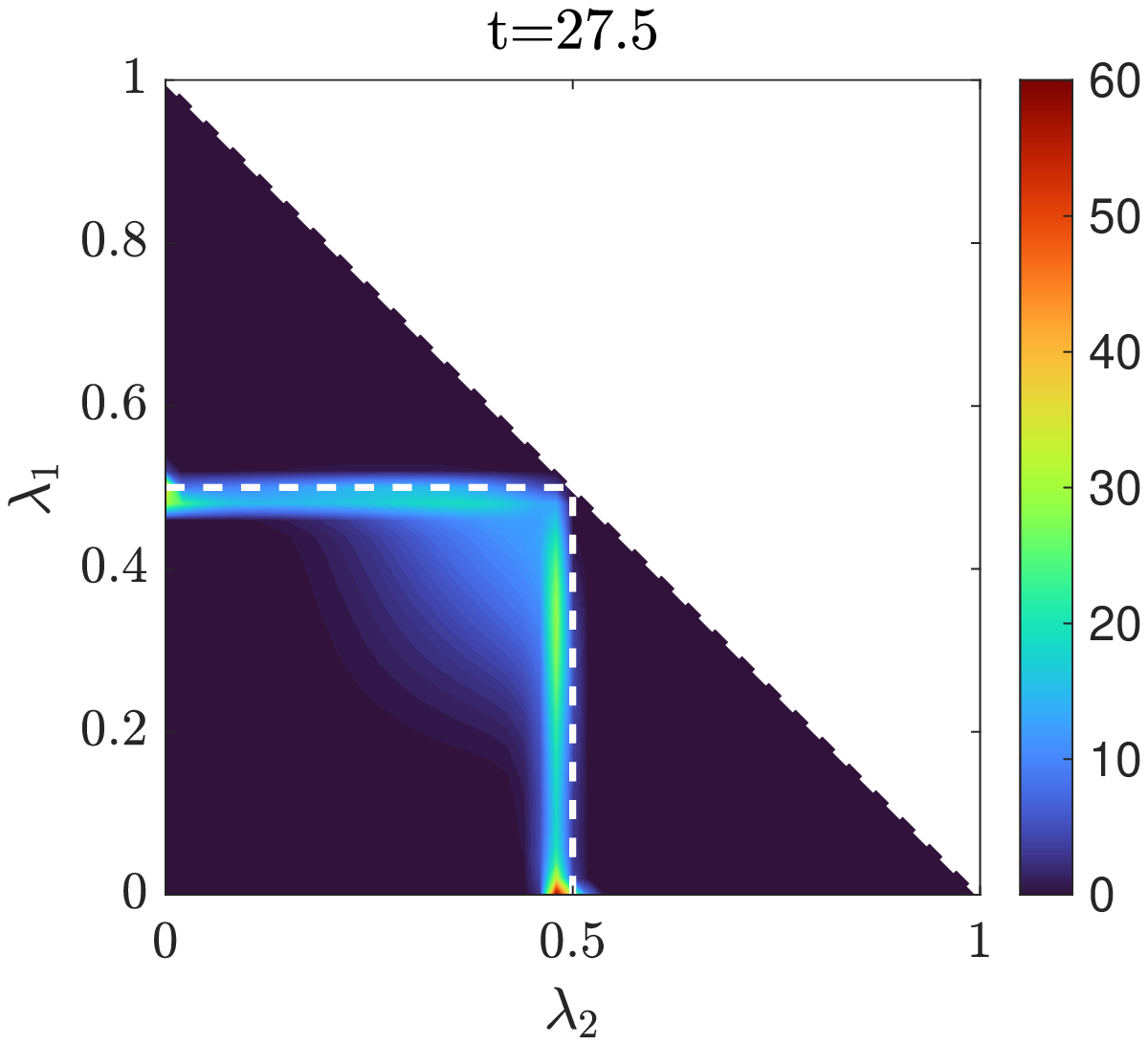}
	\includegraphics[width=0.225\linewidth]{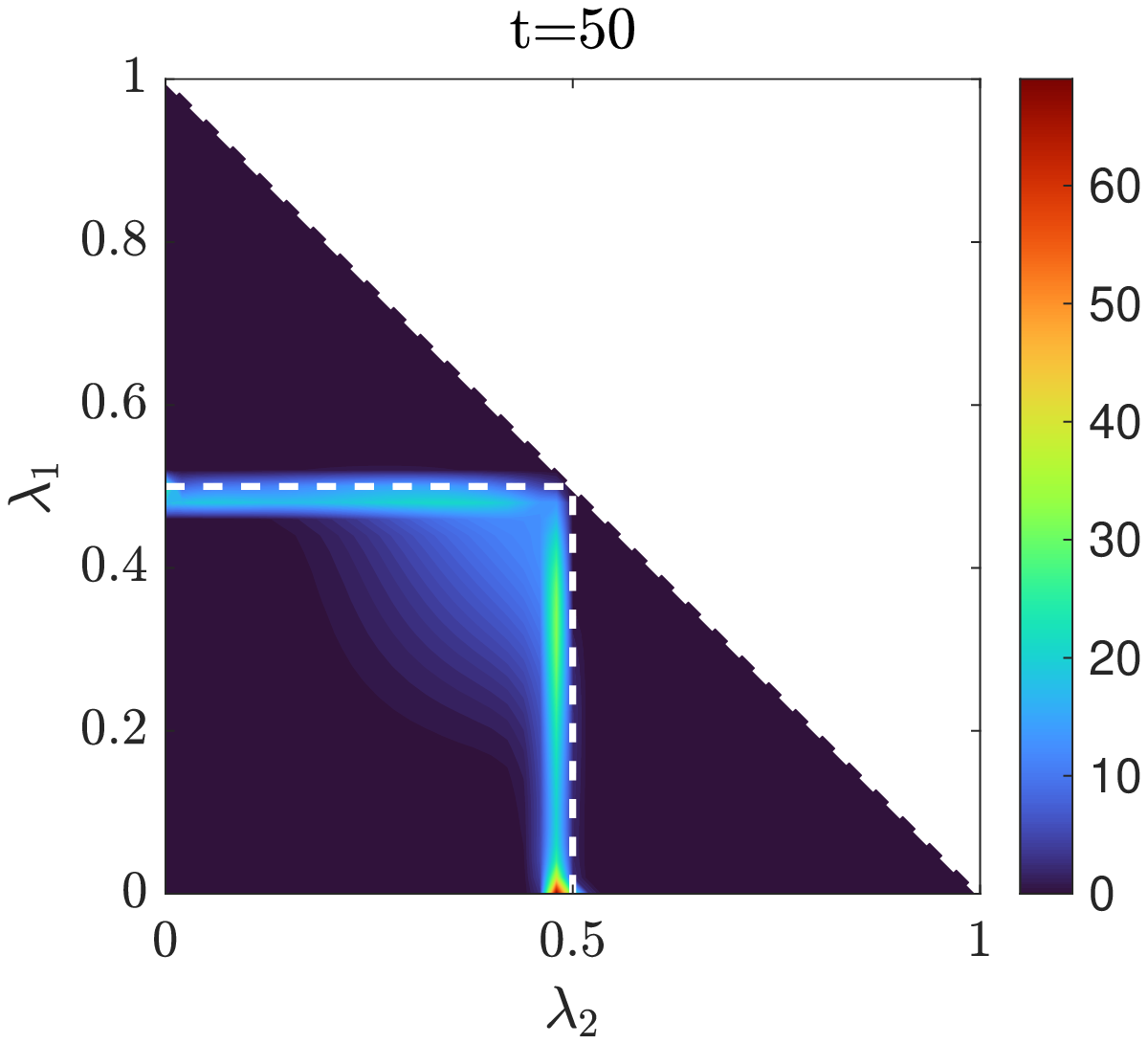}
		\\
		\includegraphics[width=0.225\linewidth]{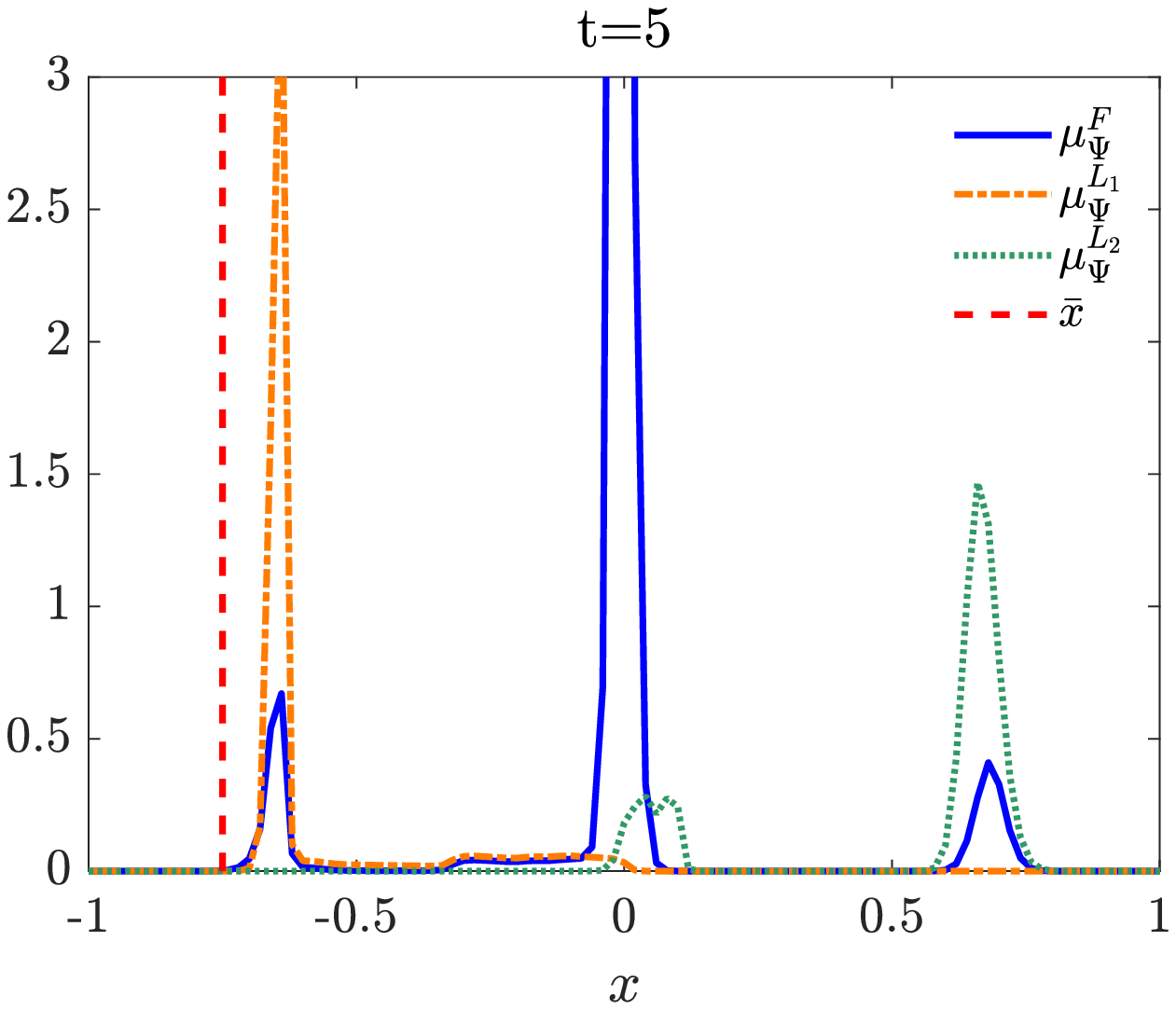}
		\includegraphics[width=0.225\linewidth]{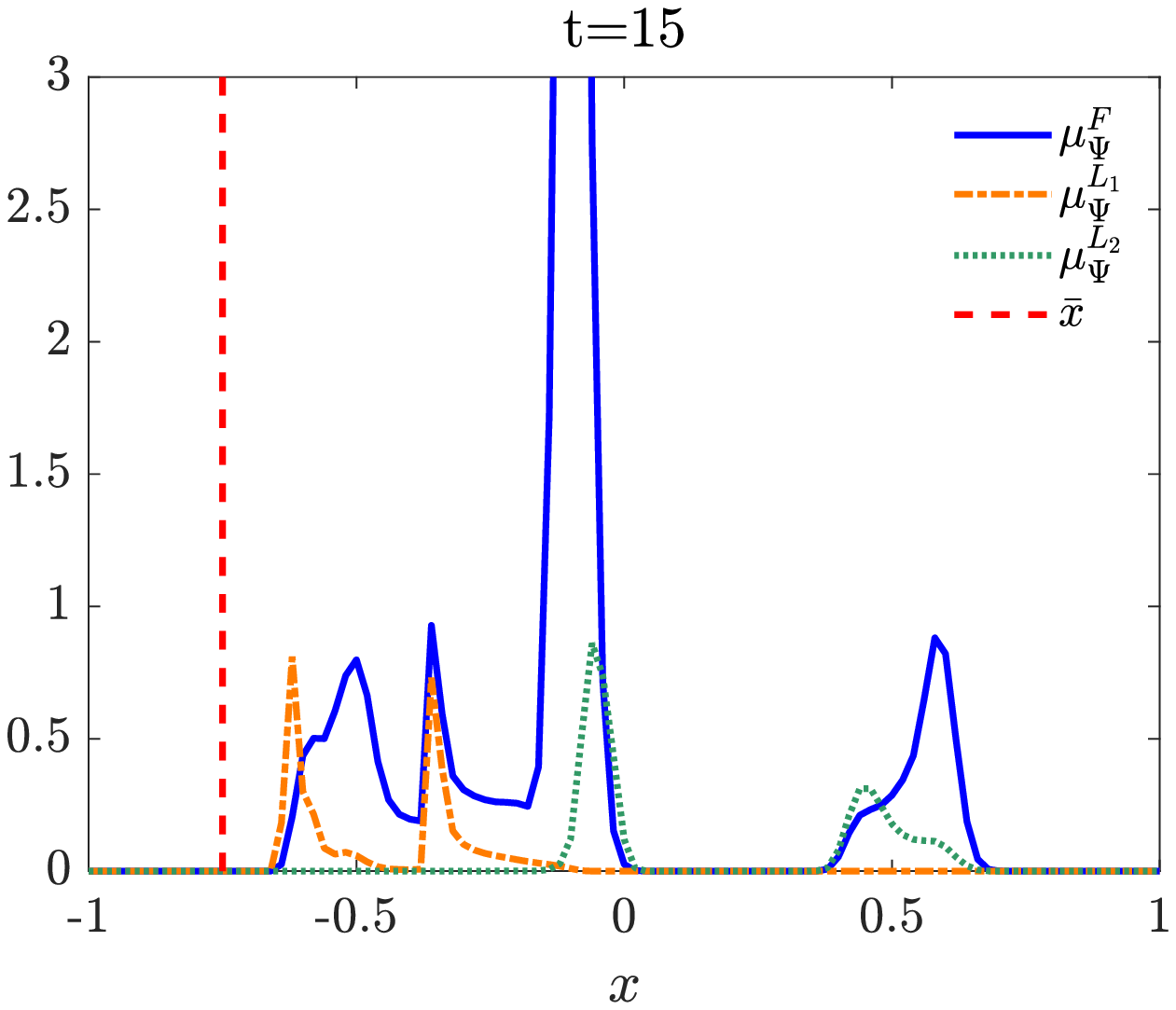}
				\includegraphics[width=0.225\linewidth]{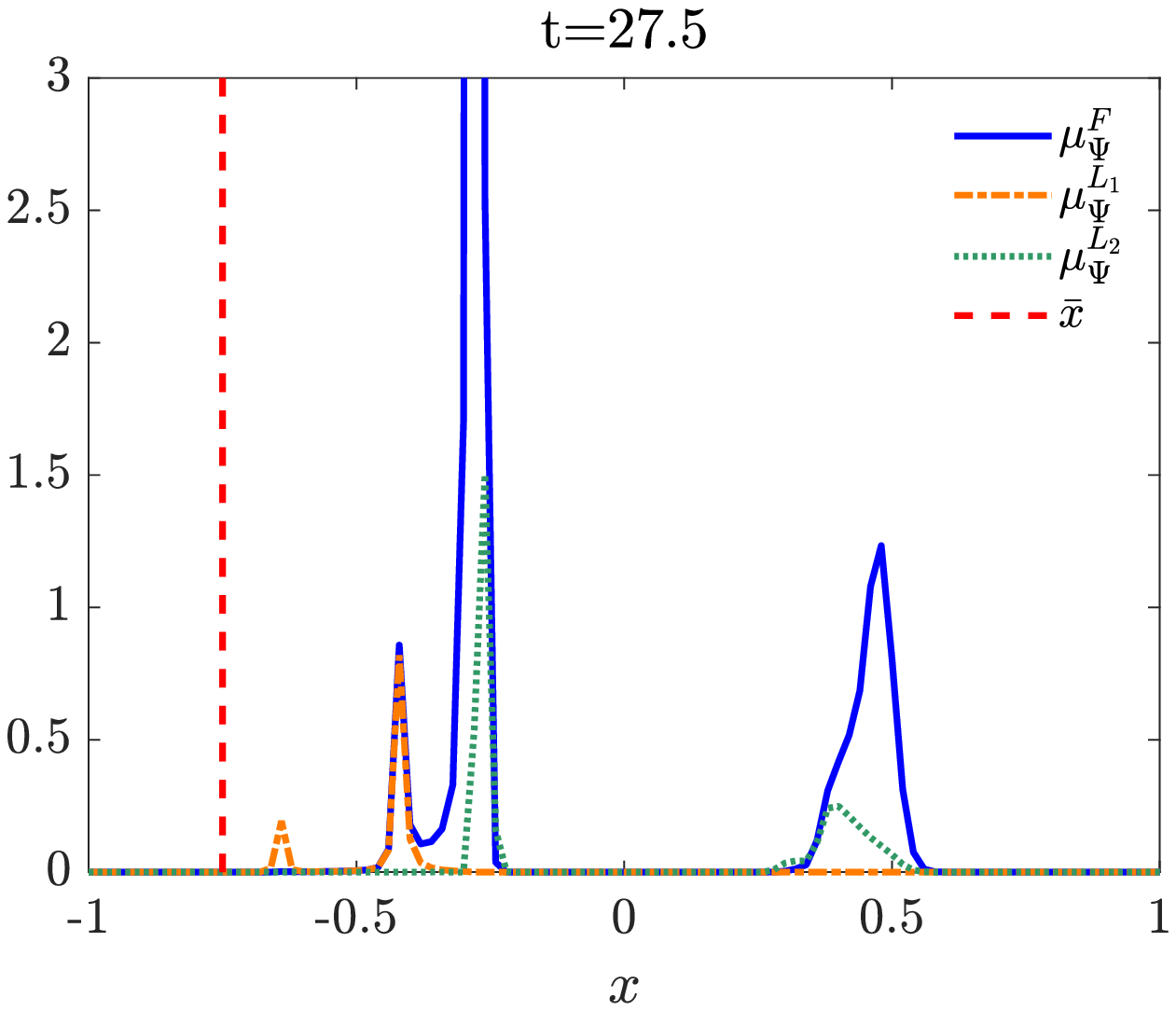}
		\includegraphics[width=0.225\linewidth]{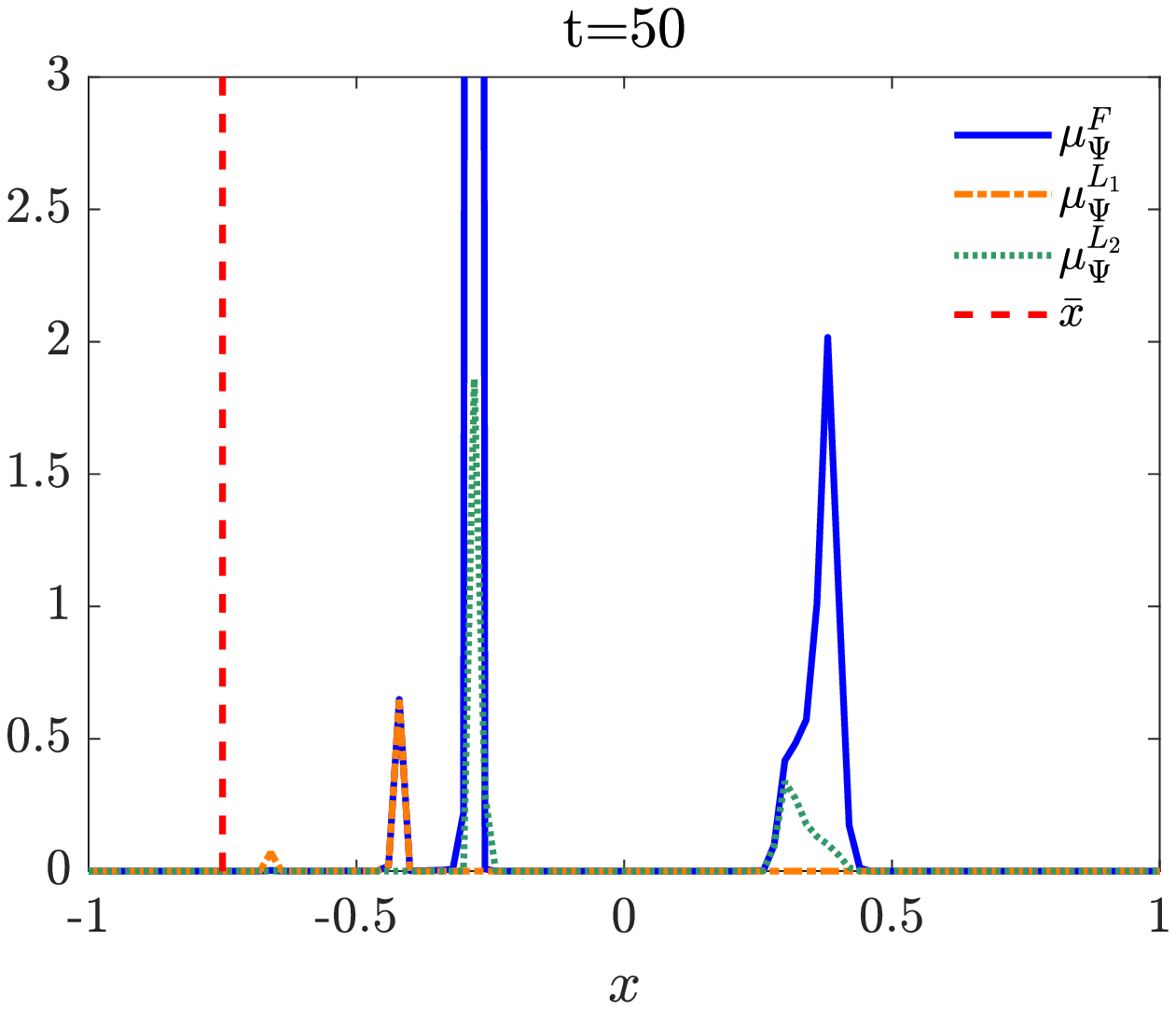}
		\caption{{\em Test 2.}~Evolution of the marginals with control for time frames  $t=\{5,15,27.5, 50\}$. Bottom row depicts $\mu^{F}_\Psi(t,x),$ $\mu^{L_1}_\Psi(t,x)$ and $\mu^{L_2}_\Psi(t,x)$; top row  shows $\nu_\Psi(t,\blambda)$.} \label{fig:evo_ctrl_Test2}
	\end{center}
\end{figure}
\newpage

\begin{figure}[!ht]
	\begin{center}
		\includegraphics[width=0.35\linewidth]{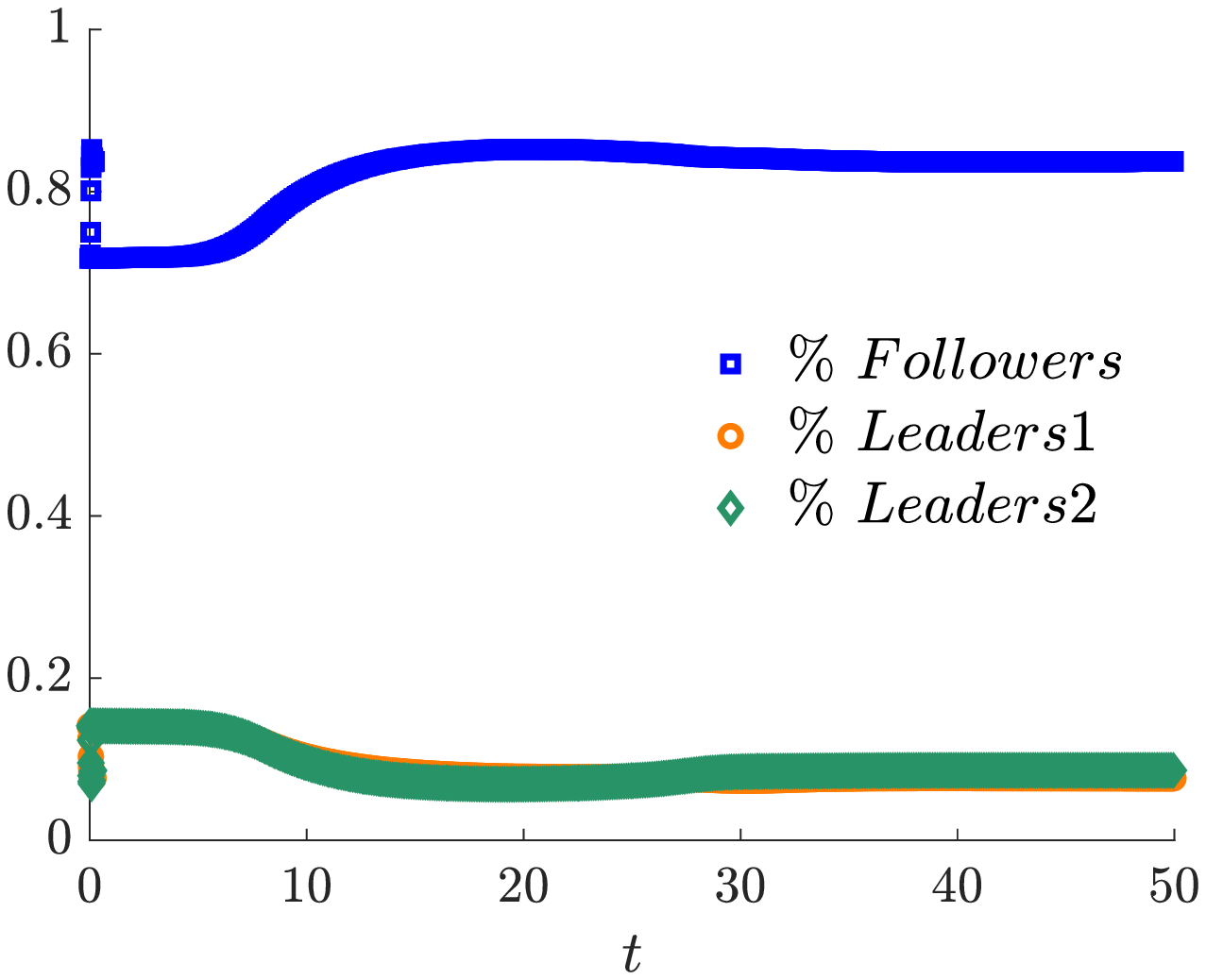}
		\hspace{+0.5cm}
				\includegraphics[width=0.35\linewidth]{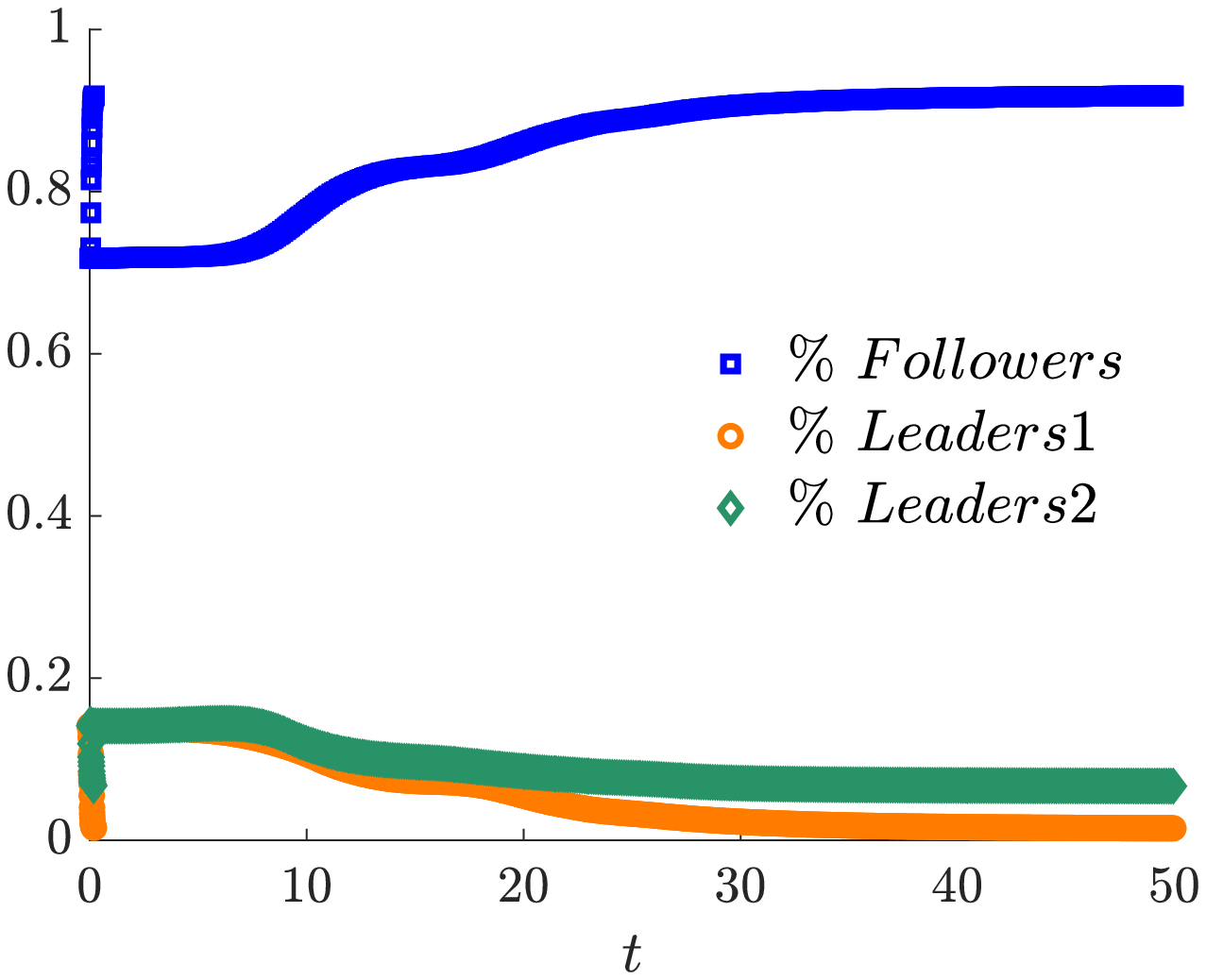}
		\caption{{\em Test 2.}~Percentage of population associated with leader $L_1,L_2$ and follower as functions of time. Left plot: uncontrolled case. Right plot: controlled case.} \label{fig:evo_noctrl}
	\end{center}
\end{figure}

\bigskip

\noindent\textbf{Acknowledgments}
The work of GA was partially supported by the MIUR-PRIN Project 2017, No.~2017KKJP4X \emph{Innovative numerical methods for evolutionary partial differential equations and applications} and by RIBA 2019, No.~RBVR199YFL \emph{Geometric Evolution of Multi Agent Systems}.
The work of SA was supported by the FWF through the projects OeAD-WTZ CZ 01/2021 and I 5149.
The work of MM was partially supported by the \emph{Starting grant per giovani ricercatori} of Politecnico di Torino and by the MIUR grant Dipartimenti di Eccellenza 2018-2022 (E11G18000350001).
The work of FS was supported by the project \emph{Variational methods for stationary and evolution problems with singularities and interfaces} PRIN 2017 financed by the Italian Ministry of Education, University, and Research.
GA is a member of the GNCS of INdAM and MM and FS are members of the GNAMPA group of INdAM.

\bibliographystyle{siam}
\bibliography{AAMS.bib}

\end{document}